\newtheorem{theorem}{Theorem}[section]
\newtheorem{lemma}[theorem]{Lemma}
\newtheorem{proposition}[theorem]{Proposition}
\newtheorem{assumption}[theorem]{Assumption}
\theoremstyle{definition}
\newtheorem{example}[theorem]{Example}
\newtheorem{remark}[theorem]{Remark}
\numberwithin{equation}{section}
\renewcommand{\labelenumi}{\roman{enumi})}
\renewcommand\theenumi\labelenumi
\renewcommand{\leq}{\leqslant}
\renewcommand{\le}{\leqslant}
\renewcommand{\geq}{\geqslant}
\renewcommand{\ge}{\geqslant}
\newcommand{\tl}{\tilde}
\newcommand{\Be}{\begin{equation}}
\newcommand{\Ees}{\end{equation*}}
\newcommand{\Bes}{\begin{equation*}}
\newcommand{\Ee}{\end{equation}}
\newcommand{\R}{\mathbb{R}}
\newcommand{\E}{\mathbb{E}}
\newcommand{\e}{\varepsilon}
\newcommand{\PP}{\mathbb{P}}
\newcommand{\mcl}{\mathcal}
\newcommand{\bbR}{\mathbb{R}}
\newcommand{\bfa}{\mathbf{a}}
\newcommand{\bfb}{\mathbf{b}}
\newcommand{\bfc}{\mathbf{c}}
\newcommand{\bfp}{\mathbf{p}}
\newcommand{\bfr}{\mathbf{r}}
\newcommand{\bfu}{\mathbf{u}}
\newcommand{\bfv}{\mathbf{v}}
\newcommand{\bfx}{\mathbf{x}}
\newcommand{\bfy}{\mathbf{y}}
\newcommand{\bfz}{\mathbf{z}}
\newcommand{\bfA}{\mathbf{A}}
\newcommand{\bfB}{\mathbf{B}}
\newcommand{\bfI}{\mathbf{I}}
\newcommand{\bfJ}{\mathbf{J}}
\newcommand{\bfQ}{\mathbf{Q}}
\newcommand{\bfM}{\mathbf{M}}
\newcommand{\bfN}{\mathbf{N}}
\newcommand{\bfX}{\mathbf{X}}
\newcommand{\bfY}{\mathbf{Y}}
\newcommand{\bfZ}{\mathbf{Z}}
\newcommand{\bmGamma}{\bm{\Gamma}}
\newcommand{\bmsigma}{\bm{\sigma}}
\newcommand{\bmtheta}{\bm{\theta}}
\newcommand{\rme}{\mathrm{e}}
\newcommand{\bmell}{\bm{\ell}}
\newcommand{\dif}{\mathop{}\!\mathrm{d}}
\newcommand{\I}{\mathds{1}}
\newcommand{\opP}{\operatorname{P}}
\newcommand{\opQ}{\operatorname{Q}}
\begin{document}

\title[Unbiased approximation of ergodic measure of piecewise $\alpha$-stable OU processes]
{Unbiased approximation of the ergodic measure for piecewise $\alpha$-stable Ornstein-Uhlenbeck processes arising in queueing networks}

\author[X. Jin]{Xinghu Jin}
\address{Xinghu Jin: School of Mathematics, Hefei University of Technology, Hefei, Anhui, 230601, China.}
\email{xinghujin@hfut.edu.cn}

\author[G. Pang]{Guodong Pang}
\address{Guodong Pang: Department of Computational Applied Mathematics and Operations Research,
George R. Brown School of Engineering,
Rice University,
Houston, TX 77005}
\email{gdpang@rice.edu}

\author[Y. Wang]{Yu Wang}
\address{Yu Wang: 1. Department of Mathematics, Faculty of Science and Technology, University of Macau, Taipa, Macau, 999078, China; \ \
2. UM Zhuhai Research Institute, Zhuhai, Guangdong, 519000, China.}
\email{yc17447@um.edu.mo}

\author[L. Xu]{Lihu Xu}
\address{Lihu Xu: 1. Department of Mathematics, Faculty of Science and Technology, University of Macau, Taipa, Macau, 999078, China; \ \
2. UM Zhuhai Research Institute, Zhuhai, Guangdong, 519000, China.}
\email{lihuxu@umac.mo}

\keywords{Euler-Maruyama scheme; piecewise Ornstein-Uhlenbeck driven by $\alpha$-stable motion; Wasserstein-1 distance; Stein's equation; Central limit theorem; Moderate deviation principle.}

\subjclass[2020]{60H10; 37M25; 60G51; 60H07; 60H35; 60G52}

\begin{abstract}
Piecewise $\alpha$-stable Ornstein-Uhlenbeck (OU) processes arising in queue networks usually do not have an explicit dissipation, which makes the related numerical methods such as Euler-Maruyama (EM) scheme more difficult to analyze.
We develop an EM scheme with decreasing step size $\Lambda=(\eta_n)_{n\in \mathbb{N}}$ to approximate their ergodic measures. This approximation does not have a bias and has a rate $\eta^{1/\alpha}_n$ in Wasserstein-1 distance.
We show by the classical OU process that our convergence rate is optimal.
We further prove the central limit theorem (CLT) and moderate derivation principle (MDP) for the empirical measure of these piecewise $\alpha$-stable Ornstein-Uhlenbeck processes.
In addition, we use the Sinkhorn--Knopp algorithm to compute the Wasserstein-1 distance and conduct simulations for several concrete examples.
\end{abstract}

\maketitle

 \tableofcontents\thispagestyle{plain}

\section{Introduction}\label{sec:Intro}

A large family of diffusion-scaled queueing processes in stochastic networks in heavy traffic converge to a universal limit, which is governed by the following stochastic differential equation (SDE):
\begin{equation}\label{e:SDE}
	\dif  \bfX_t = g( \bfX_t) \dif t+ \bmsigma \dif  \bfZ_t, \quad  \bfX_0 = \bfx\in\R^d,
\end{equation}
where $ \bfX_t$ is $\R^d$-valued for each $t \ge 0$, the coefficient $\bmsigma = [\bmsigma_1, \dotsc, \bmsigma_d] $ is a non-singular constant matrix in $\R^{d\times d}$ with each {column vector} $\bmsigma_i \in \R^d$, and $g: \mathbb{R}^d\to \R^d $ is a Lipschitz function defined as
\begin{equation}\label{e:g}
	\begin{aligned}
		g( \bfx) &= \bmell- \bfM ( \bfx-\langle {\rm \bf e}_d, \bfx \rangle^+  \bfv)
		-\langle {\rm \bf e}_d, \bfx \rangle^+  \bmGamma  \bfv \\
		&=
		\begin{cases}
			\bmell-( \bfM + ( \bmGamma- \bfM ) \bfv{\rm \bf e}_d^{\prime}) \bfx,   & \text{ \quad if } \  {\rm \bf e}_d^{\prime}  \bfx >0,  \\
			\bmell- \bfM  \bfx,   & \text{ \quad if }\  {\rm \bf e}_d^{\prime}  \bfx \leq 0,
		\end{cases}
	\end{aligned}
\end{equation}
where $\bmell\in \R^d$, the vector $\bfv\in \R^d_+$ satisfies $\langle {\rm \bf e}_d, \bfv \rangle={\rm \bf e}_d^{\prime} \bfv=1$ with ${\rm \bf e}_d=(1,\cdots,1)^{\prime} \in \R^d$, the matrix $ \bfM \in \R^{d\times d}$ is a non-singular $\mathrm{M}$-matrix (see, its definition below and more details in \cite[Chapter 6]{Berman1994Nonnegative}), the term $\langle {\rm \bf e}_d, \bfx \rangle^+$ means the positive part of $\langle {\rm \bf e}_d, \bfx \rangle$, and $ \bmGamma={\rm diag}(\gamma_1,\cdots,\gamma_d)$ with $\gamma_i\in \R_+, i=1,\cdots,d$.  Here, we use notation $\bfz^\prime$ for  the transpose of any $\bfz\in \R^d$. Furthermore, the $\alpha$ stable noise $\bfZ_t$ is one of the below processes:
\begin{itemize}
	\item [(i)] For $\alpha=2$, the noise $\bfZ_t$ is the standard $d$ dimensional Brownian motion;
	\item [(ii)] For $\alpha\in (1,2)$, the noise $\mathbf{Z}_t = (Z_t^1, \dots, Z_t^d)^{\prime}$ is the cylindrical  stable noise, and $Z_t^i, i=1,\dotsc,d$ are i.i.d. symmetric $\alpha$-stable processes in dimension $1$ with L\'evy measure $\nu_1(\dif z)=c_{\alpha}|z|^{-1-\alpha} \dif z$, where  $c_{\alpha}= \alpha 2^{\alpha-1}\pi^{-1/2} \Gamma ({( 1+\alpha ) }/{2} )/ \Gamma (1-{\alpha}/{2} )$;
	\item [(iii)] For $\alpha \in (1,2)$, the noise $\bfZ_t$ is the $d$ dimensional rotationally symmetric $\alpha$ stable noise with L\'{e}vy measure $\nu(\dif \bfz)=C_{d,\alpha}|\bfz|^{-\alpha-d} \dif \bfz$, where the constant $C_{d,\alpha}$ is given by
	\begin{equation*}
		C_{d,\alpha} =  \alpha2^{\alpha-1}\pi^{-d/2} \frac{\Gamma\big({( d+\alpha) }/{2} \big)}{\Gamma(1- {\alpha}/{2})} .
	\end{equation*}
\end{itemize}

The cases in (i) and (ii) arise from multiclass parallel server networks and many-server queues with phase-type service times with light-tailed and heavy-tailed arrival processes, respectively. We also consider the case (iii) which have applications in data science (see, e.g.,  \cite{chen2021solving}, \cite{raj2023algorithmic}) and mathematical finance (see, e.g., \cite{tankov2003financial}).

The SDE \eqref{e:SDE} is often called piecewise Ornstein Uhlenbeck (OU) process and has been widely used to model queueing networks (see, e.g.,  \cite{Arapostathis2019Uniform,
	Arapostathis2019Ergodicity,Braverman2017Stein2,
	Dieker2013Positive}). It is often hard to find steady states for queueing networks directly, whereas the invariant measure of the limiting SDE can be used as an approximation in the appropriate scaling regimes. Therefore, it is highly desirable to figure out the ergodic measure of SDE \eqref{e:SDE} and design an algorithm to generate random variables with this distribution.

Throughout this paper, we assume that the matrices $ \bfM $, $ \bmGamma$ and the vector $ \bfv$ satisfy the following assumption:
\begin{assumption}\label{assump:M}
	One of the following conditions holds:
	\begin{itemize}
		\item [(i)] $ \bfM  \bfv\geq  \bmGamma  \bfv \gneqq {\bf 0}$;
		\item [(ii)] $ \bfM ={\rm diag}(m_1,\cdots,m_d)$ with $m_i>0$, $i=1,\cdots,d$ and $ \bmGamma  \bfv \neq {\bf 0}$,
	\end{itemize}
	where the notations $\geq$ and $\gneqq$ are defined in \eqref{e:>>=} and \eqref{e:>Neq} respectively.
\end{assumption}

Under Assumption \ref{assump:M}, the process $( \bfX_t)_{t\geq 0}$ is exponentially ergodic with the ergodic measure $\mu$ (see, \cite[Theorem 3.5]{Arapostathis2019Ergodicity} or Lemma
\ref{lem:eeSDEs} below). We shall use the following Euler-Maruyama (EM) scheme to approximate the ergodic measure $\mu$. Let $\Lambda=(\eta_n)_{n\in \mathbb{N}}$ be the step size, for $n\in \mathbb{N}_0=\mathbb{N} \cup \{0\}$, and
\begin{equation}\label{e:EM2}
	\bfY_{t_{n+1}} = \bfY_{t_n} + \eta_{n+1} g( \bfY_{t_n})+ \bmsigma  {\Delta} \bfZ_{\eta_{n+1}}, \quad   \bfY_0 = \bfX_0 =   \bfx,
\end{equation}
where $t_n= \sum_{i=1}^n \eta_i$, $t_0=0$ and $\Delta\bfZ_{\eta_{n+1}}=\bfZ_{t_{n+1}}-\bfZ_{t_{n}}$. Then $( \bfY_{t_n})_{n\in \mathbb{N}_0}$ is a non-homogeneous Markov chain.

\subsection{Related works}

{\it  Diffusion approximations in stochastic networks.}
The piecewise Ornstein--Uhlenbeck diffusion in \eqref{e:SDE} is the scaling limit of a large family of stochastic networks and queueing systems.
In the Halfin-Whitt regime, such diffusions are established for multiclass $GI/Ph/n$ and $GI/Ph/n+GI$ queues in \cite{DHT2010,PR2000}, while their ergodic properties are studied in \cite{Dieker2013Positive} and the related numerical analysis of the stationary distribution are studied in \cite{DH2013,DH2012}.
In \cite{Arapostathis2019Ergodicity,pang2010queueing}, the $\alpha$-stable (controlled) limiting diffusions are established for
$G/M/n(+M)$ and multiclass $G/M/n(+M)$ queues, respectively, when the arrival processes are heavy-tailed  (such as regularly varying interarrival times) and converge to an $\alpha$-stable motion or independent  $\alpha$-stable motions, and the ergodicity properties of such processes are studied in \cite{Arapostathis2019Uniform,Arapostathis2019Ergodicity}. In fact, for (multiclass) $G/Ph/n$ and $G/Ph/n+GI$ queues, when the arrival processes are heavy-tailed and converge to an $\alpha$-stable motion, one also obtains the limiting process as given in  \eqref{e:SDE} (the proof follows from slight modifications of  \cite{DHT2010,pang2010queueing,PR2000}).

Due to the non-Markovian and network complexities, the stationary distribution and ergodic properties of these queueing networks/systems are impossible to be derived directly. Therefore, their (jump) diffusion approximations become useful since the related techniques of studying invariant measures and ergodic properties have been well developed.
We refer the reader to the recent studies on the relevant ergodic properties in these networks and queueing systems in  \cite{Aghajani2020The,Arapostathis2021On,
	Arapostathis2019Uniform,
	Arapostathis2019Ergodicity,Braverman2017Stein2,
	Braverman2017Stein,Dieker2013Positive,
	Gurvich2014Diffusion}.
In this paper, we focus on the approximations of the ergodic measures of the limiting diffusions. This is in the same spirit with the Euler schemes for constrained diffusions as scaling limits of stochastic networks in \cite{BCR1} and for piecewise Ornstein-Uhlenbeck diffusions as scaling limits of $GI/Ph/n+GI$ queues and parallel server networks in \cite{Jin2024An}.

{\it EM scheme.} Recently, Fang et al. \cite{Fang2019Multivariate} used the EM scheme to approximate ergodic measures for diffusions with help of Malliavin calculus and Stein's method and obtained a convergence rate in Wasserstein-1 distance. There are also some references concerning on the computation of ergodic measures for SDEs, see \cite{Benaim2017Ergodicity,Panloup2008Computation,
	Panloup2008Recursive} and references therein, but most of them are asymptotic type. Strong convergence for EM scheme can be found in \cite{Kuhn2019Strong} and references therein.  Jin et al. \cite{Jin2024An} used the EM scheme to approximate the ergodic measure for the piecewise OU process driven by Brownian motion and considered the related asymptotics for the original process.

{\it Central limit theorem (CLT) and Moderate deviation principle (MDP) for empirical measures.} It follows from \cite[Theorem 3.5]{Arapostathis2019Ergodicity} (or see Lemma  \ref{lem:eeSDEs} below) that the  SDE \eqref{e:SDE} is exponentially ergodic with ergodic measure $\mu$, which implies that the Birkhoff ergodic theorem holds true for the empirical measure	$\mathcal{E}_T$ (see the definition in \eqref{e:ETX} below) of the process $(\bfX_t)_{t\geq 0}$ (i.e., $\lim_{T\to \infty}\mathcal{E}_T=\mu$ almost surely); see, for instance, \cite{da1996ergodicity}. It is natural to consider its CLT and MDP with respect to $\mathcal{E}_T$. The recent work \cite{Jin2024An} considered the CLT and MDP for the SDE in \eqref{e:SDE} with $\alpha=2$, it is natural to ask whether the CLT and MDP hold true for the case $\alpha \in (1,2)$.

{\it Sinkhorn-Knopp algorithm.} The Sinkhorn-Knopp algorithm is widely used in approximating the Wasserstein type distances, see \cite{Chizat2020Faster,Cuturi2013Sinkhorn,Khamlicha2023Optimal,Liao2022Fast,
	Luise2018Differential,Xie2020A} and references therein.
Cuturi \cite{Cuturi2013Sinkhorn} introduced the {Sinkhorn distance} to study the optimal transportation problem between two probability measures, and used this distance with the entropy constraint to develop algorithms with a  cheap cost. Such Sinkhorn distance with constraints can be computed by the Sinkhorn and Knopp's fixed point iteration. In addition, Luise et al. \cite{Luise2018Differential} gave regularity for the regularized Sinkhorn distance and a convergence rate for the approximation in Wasserstein distance. We refer the  reader to \cite{Knight2008The} and references therein for more details about the Sinkhorn-Knopp algorithm.

\subsection{Our contributions and main results}

Jin et al. \cite{Jin2024An} develops an EM scheme  for approximating the invariant measure of piecewise Ornstein-Uhlenbeck diffusions driven by Brownian motion.
However, the numerical scheme in \cite{Jin2024An} has a constant step size, which leads to a bias in the approximation, see \cite[Theorem 1]{Jin2024An}. In order to remove this bias, we will use a numerical scheme with decreasing step sizes, which converges to the ergodic measure exactly as the time tends to infinity.

The very recent works \cite{Chen2023Approximation,Chen2023Approximation2} provide an approximation to the ergodic measure of SDEs driven by $\alpha$-stable process and get convergence rates in Wasserstein-1 distance. However,  they have to assume that the drift $g\in \mathcal{C}^2(\R^d,\R^d)$ satisfies the following dissipation condition: $\langle \bfx - \bfy, g(\bfx)-g(\bfy) \rangle \leq -c|\bfx-\bfy|^2+K$ with some positive constants $c$ and $K$.
Most of the works on approximating the ergodic measures of SDEs need a similar assumption, see \cite{Huang2018The,Kuhn2019Strong,
	Pamen2017Strong,Panloup2008Computation,
	Panloup2008Recursive,Protter1997The}. However, the piecewise linear drift $g$ in \eqref{e:g} does not satisfy such a dissipation condition, see \eqref{e:NonDis} below, hence the known approximation methods do not work. Borrowing the mollification technique in \cite{Jin2024An} and the intermediate matrix ${\bf Q}$ proposed in \cite{Arapostathis2019Ergodicity}, we are able to establish a desirable  dissipation property for the drift $g$, and surprisingly find that a straightforward strong approximation works.

Note that the SDE \eqref{e:SDE} driven by $\alpha$-stable noise does not have finite second moment when $\alpha \in (1,2)$, one may suspect whether the CLT and MDP will hold for the empirical measure $\mathcal{E}_T$. Our second main result confirms the both asymptotic properties are true. In their proofs, Stein's method with nonlocal operator plays an important role,  the related analysis is much more delicate than the one in the case $\alpha=2$ and has its own independent interest.

Besides proving the theorems about the approximation error and the CLT and MDP, we use the recently developed the Sinkhorn-Knopp algorithm to verify the convergence of our numerical scheme in Wasserstein-1 distance.  This algorithm works very well for both the light-tailed models and the $\alpha$-stable heavy tailed ones.

\vskip 3mm

Before presenting our main results, we impose some assumptions on the step size sequence $\Lambda=(\eta_n)_{n\in \mathbb{N}}$. Conveniently, for some constant $\beta \in (0,1]$, define $\omega$ as below:
\begin{equation}\label{e:Q}
	\omega =  \lim_{k \to \infty}  \left\{ \frac{\eta_k^{\beta}-\eta^{\beta}_{k+1}}
	{\eta_{k+1}^{1+\beta}} \right\}.
\end{equation}

\begin{assumption}\label{assump-2}
	The positive decreasing step size sequence $\Lambda=(\eta_n)_{n\in \mathbb{N}}$ satisfies the following conditions:
	\begin{itemize}
		\item [(i)] $\eta_1<1$, $\sum_{n=1}^{\infty} \eta_n =\infty$ and  $\sum_{n=1}^{\infty} \eta_n^2 < \infty$;
		\item [(ii)] The above $\omega$ satisfies $\omega<\alpha \beta \theta$, where the constant $\theta$ is defined  in \eqref{e:lambda} below.
	\end{itemize}
\end{assumption}

Our first main result is the following theorem about the EM scheme, which provides a nonasymptotic estimate for the error between the ergodic measures of the SDE and its EM scheme, whose proofs will be given in Section \ref{sec:ProofThm1} below.

\begin{theorem}\label{thm:XY-dis}
	For $\alpha\in(1,2]$, under Assumptions \ref{assump:M} and \ref{assump-2}, let $\mu$ be the ergodic measure for the process $( \bfX_t)_{t\geq 0}$ of SDE \eqref{e:SDE}, and let the EM scheme  $( \bfY_{t_n})_{n\in \mathbb{N}_0}$ be given by \eqref{e:EM2}. Then there exists a positive constant $C$ independent of $(\eta_n)_{n\in \mathbb{N}}$ such that for any $n\in \mathbb{N}$ and $\bfx\in \R^d$, one has
	\begin{equation*}
		\mathcal{W}_1 (\mu, \mathcal{L}( \bfY_{t_n}^{\bfx})) \leq C(1+|\bfx|)\eta_n^{ {1}/{\alpha} },
	\end{equation*}
	where $ \bfY_{t_n}^{\bfx}$ means {the marginal random variable} of process $( \bfY_{t_n})_{n\in\mathbb{N}_0}$ at time $t_n$ with $ \bfY_0=\bfx$  and $ \mathcal{W}_1 $ is the Wasserstein-1 distance defined in \eqref{e:W1} below.
\end{theorem}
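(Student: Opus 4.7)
\emph{Proof idea.} The plan is to split $\mathcal{W}_1(\mu,\mathcal{L}(\bfY_{t_n}^{\bfx}))$ by the triangle inequality into an \emph{ergodicity term} $\mathcal{W}_1(\mu,\mathcal{L}(\bfX_{t_n}^{\bfx}))$, which I expect to decay exponentially in $t_n$, and a \emph{discretization term} $\mathcal{W}_1(\mathcal{L}(\bfX_{t_n}^{\bfx}),\mathcal{L}(\bfY_{t_n}^{\bfx}))$, which I would show is of the desired order $\eta_n^{1/\alpha}$ via a synchronous coupling and a telescoping argument. The exponent $1/\alpha$ comes directly from the local scaling $\E|\bfZ_s|\leq C s^{1/\alpha}$ of the driving $\alpha$-stable process (finite since $\alpha>1$), and the factor $1+|\bfx|$ propagates through a uniform-in-$n$ moment bound on $\bfY_{t_n}^{\bfx}$.

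The first obstacle is that the piecewise linear drift $g$ in \eqref{e:g} lacks the standard one-sided Lipschitz bound in Euclidean norm (see the forthcoming \eqref{e:NonDis}). Following the strategy outlined in the introduction, I would use the $\bfQ$-weighted inner product from \cite{Arapostathis2019Ergodicity}, under which $g$ satisfies the dissipation
\[
\langle\bfx-\bfy,\bfQ\bigl(g(\bfx)-g(\bfy)\bigr)\rangle\leq -\theta\,(\bfx-\bfy)^{\prime}\bfQ(\bfx-\bfy),
\]
with $\theta$ the constant from \eqref{e:theta}, and mollify $g$ at scale $\delta$ as in \cite{Jin2022An} whenever a smooth version is required. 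Since $\bmsigma\bfZ_t$ is additive, coupling two solutions of \eqref{e:SDE} with the same noise makes the jumps of the difference cancel, so the difference process is pathwise absolutely continuous; combining this with the $\bfQ$-dissipation yields the exponential contraction
\[
\mathcal{W}_1\bigl(\mathcal{L}(\bfX_t^{\bfx}),\mathcal{L}(\bfX_t^{\bfy})\bigr)\leq C\rme^{-\theta t}|\bfx-\bfy|,
\]
which, after integrating $\bfy$ against $\mu$ and using a moment bound on $\mu$, controls the ergodicity term by $C(1+|\bfx|)\rme^{-\theta t_n}$.

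For the discretization term I would introduce the hybrid sequence that runs \eqref{e:EM2} up to time $t_k$ and then the exact SDE \eqref{e:SDE} from $t_k$ to $t_n$, so that a telescoping sum together with the contraction above gives
\[
\mathcal{W}_1\bigl(\mathcal{L}(\bfX_{t_n}^{\bfx}),\mathcal{L}(\bfY_{t_n}^{\bfx})\bigr)\leq C\sum_{k=0}^{n-1}\rme^{-\theta(t_n-t_{k+1})}\,\E\bigl|\bfX_{\eta_{k+1}}^{\bfY_{t_k}}-\bfY_{t_{k+1}}\bigr|.
\]
The one-step error reduces, under the same synchronous coupling and by Lipschitz continuity of $g$, to
\[
\E\bigl|\bfX_{\eta_{k+1}}^{\bfY_{t_k}}-\bfY_{t_{k+1}}\bigr|\leq L\int_0^{\eta_{k+1}}\E\bigl|\bfX_s^{\bfY_{t_k}}-\bfY_{t_k}\bigr|\dif s\leq C\bigl(1+\E|\bfY_{t_k}|\bigr)\,\eta_{k+1}^{1+1/\alpha},
\]
using $\E|\bmsigma\bfZ_s|\leq Cs^{1/\alpha}$ together with the a priori bound $\sup_n\E|\bfY_{t_n}^{\bfx}|\leq C(1+|\bfx|)$, itself obtained from \eqref{e:EM2} and the $\bfQ$-dissipation. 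The remaining step is to show that $\sum_k\rme^{-\theta(t_n-t_{k+1})}\eta_{k+1}^{1+1/\alpha}\leq C\eta_n^{1/\alpha}$; this is precisely where Assumption~\ref{assump-2}(ii) enters, with the choice $\beta=1/\alpha$ in the constraint $\omega<\alpha\beta\theta$, via a discrete Abel/Gronwall inequality for the decreasing step-size sequence. I expect the main difficulty to lie in this last calibration and in controlling the mollification uniformly in $\delta$ so that the $\bfQ$-dissipation survives all the way through to the discrete EM chain; once those are handled, the remainder is a natural adaptation of the Brownian analysis in \cite{Jin2022An} to the $\alpha$-stable setting.
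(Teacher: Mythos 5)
Your overall architecture is sound and, at its core, it is the same engine the paper uses for its middle term: a $\bfQ$-weighted dissipation from \cite{Arapostathis2019Ergodicity} giving synchronous-coupling contraction at rate $\theta$, a Lindeberg/hybrid telescoping of the semigroups, a one-step error of order $\eta_i^{1+1/\alpha}$ from the Lipschitz drift and $\E|\bmsigma\bfZ_s|\le Cs^{1/\alpha}$, and the weighted-series estimate $\sum_i \rme^{-\theta(t_n-t_i)}\eta_i^{1+1/\alpha}\le C\eta_n^{1/\alpha}$ under Assumption \ref{assump-2}(ii) (the paper's Lemma \ref{lem:series}). Where you genuinely differ is the outer decomposition: the paper routes everything through the mollified SDE \eqref{e:SDEe} and a mollified EM chain, pays $\mathcal{W}_1(\mu,\mu_\e)\le C\e$ (via a Poisson equation and Jacobi-flow gradient bounds) plus $\mathcal{W}_1(\mathcal L(\bfY^{\e,\bfx}_{t_n}),\mathcal L(\bfY^{\bfx}_{t_n}))\le C\e^{1/2}$ (a discrete Gronwall comparison of the two chains), and then optimizes $\e\le\eta_n^{2/\alpha}$; you skip all of that and work directly with $(\bfX,\bfY)$, using mollification only to justify the dissipation for the non-smooth $g$. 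That shortcut is legitimate: the inequality \eqref{e:dis-con} has an $\e$-independent constant, so letting $\e\to0$ (or writing $\langle{\rm\bf e}_d,\bfx\rangle^+-\langle{\rm\bf e}_d,\bfy\rangle^+=s\,\langle{\rm\bf e}_d,\bfx-\bfy\rangle$ with $s\in[0,1]$) gives the same dissipation for $g$, and the coupled difference process is absolutely continuous, so the contraction for $\bfX$ itself goes through. Your route is leaner for this theorem; the paper's buys the Poisson-equation and mollification machinery, which it reuses elsewhere. Two cosmetic points: $\beta$ in Assumption \ref{assump-2} is given, not chosen to be $1/\alpha$ (the series lemma works with whatever $\beta$ the assumption fixes), and the final step also needs $\rme^{-\theta t_n}\le C\eta_n^{1/\alpha}$, which follows from the same computation (it is the $i=1$ term of Lemma \ref{lem:series}).

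The one place your sketch would fail as written is the uniform moment bound $\sup_n\E|\bfY^{\bfx}_{t_n}|\le C(1+|\bfx|)$, which you assert follows ``from \eqref{e:EM2} and the $\bfQ$-dissipation.'' For $\alpha<2$ the obvious implementation does not work: the quadratic form $\langle\bfY,\bfQ\bfY\rangle$ has infinite expectation under stable increments, and if instead you run the first-moment recursion $\E V(\bfY_{t_{n+1}})\le(1-c\eta_{n+1})\E V(\bfY_{t_n})+C\E|\bmsigma\Delta\bfZ_{\eta_{n+1}}|$ with $\E|\bmsigma\Delta\bfZ_{\eta}|\asymp\eta^{1/\alpha}$, the accumulated noise contribution is of order $\sum_j\eta_j^{1/\alpha}\rme^{-c(t_n-t_j)}\asymp\eta_n^{1/\alpha-1}$, which diverges as $\eta_n\to0$. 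The point is that adding per-step noise first moments is too lossy; the noise must contribute $O(\eta)$ per step, not $O(\eta^{1/\alpha})$. The paper gets this by comparing one EM step with the exact SDE over the same step and invoking the generator-level Lyapunov bound $\mathcal A^{\alpha}_{\e}V\le -cV+C$ for the smooth $V(\bfx)=(1+\langle\bfx,\bfQ\bfx\rangle)^{1/2}$ (the jump part is tamed by $\|\nabla V\|_\infty$, $\|\nabla^2V\|_{{\rm HS},\infty}$ and the Lévy-measure truncation), together with the $k_0$-splitting for small $n$; an equivalent fix is a discrete Taylor/compensation argument exploiting independence and exact stability of the increments. You need one of these ideas — your dissipation-plus-noise recursion alone does not deliver uniformity in $n$ for $\alpha\in(1,2)$. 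With that supplied, the rest of your plan goes through.
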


Our second set of main results includes the CLT and MDP for the long-term behavior of $(\bfX_t)_{t\geq 0}$ with $\alpha\in (1,2)$.
Recall that the CLT and MDP for the process $(\bfX_t)_{t\geq 0}$ with $\alpha=2$ has been shown in \cite[Theorems 2 and 3]{Jin2024An}.
For any $\bfx\in \R^d$ and $T>0$, the empirical measure $\mathcal{E}_t^{\bfx}$ of $(\bfX_t)_{t\geq 0}$ is defined by
\begin{equation}\label{e:ETX}
	\mathcal{E}_T^{\bfx}(A) = \frac{1}{T} \int_0^T \delta_{\bfX_s^{\bfx}}(A) \dif s, \quad A \in \mathcal{B}(\R^d),
\end{equation}
where $\mathcal{B}(\R^d)$ is the collection of Borel sets on $\R^d$, and $\delta_{\bfy}(\cdot)$ is the delta measure defined as $\delta_{\bfy}(A)=1$ if $y\in A$ and $\delta_{\bfy}(A)=0$ otherwise. It is easy to check that for any bounded Borel measurable function $h\in \mathcal{B}_b(\R^d,\R)$,
\begin{equation*}
	\mathcal{E}_T^{\bfx}(h) = \frac{1}{T} \int_0^T h(\bfX_s^{\bfx}) \dif s.
\end{equation*}

\begin{theorem}[CLT]\label{thm:SDECLT}
	For $\alpha \in (1,2)$ and under Assumption \ref{assump:M}, let $\mu$ be the ergodic measure for the process $(\bfX_t)_{t\geq 0}$. For any $h\in \mathcal{B}_b(\R^d,\R)$ and $\bfx\in \R^d$, the term $\sqrt{t}[\mathcal{E}_t^\bfx(h)-\mu(h)]$ converges weakly to Gaussian distribution
	$\mathcal{N}(0,\mathcal{V}(f_h))$ as $t\to \infty$, where $\mathcal{V}(f_h)$ is given as following:
	\begin{equation}\label{e:Vf}
		\mathcal{V}(f_h) =
		\left\{
		\begin{aligned}
			&\int_{\R^d} \int_{\R^d \setminus \{\bf 0\}} \big[f_h(\bfx+{\bf \bmsigma z})-f_h(\bfx) \big]^2 \nu(\dif \bfz) \mu(\dif \bfx), \quad \text{ rotationally  symmetric  noise,} \\
			&\sum_{i=1}^d \int_{\R^d} \int_{\R \setminus \{0\}} \big[f_h(\bfx+ \bmsigma_i z_i)-f_h(\bfx) \big]^2 \nu_1(\dif z_i) \mu(\dif \bfx), \quad \text{ cylindrical stable noise,}
		\end{aligned}
		\right.
	\end{equation}
	where $\bmsigma = [\bmsigma_1, \dotsc, \bmsigma_d]$ and $f_h$ is the solution to the Stein's equation \eqref{e:steinbb} below.
\end{theorem}

\begin{theorem}[MDP]\label{thm:SDEMDP}
	For $\alpha \in (1,2)$ and under Assumption \ref{assump:M}, let $\mu$ be the ergodic measure for the process $(\bfX_t)_{t\geq 0}$.
	For any $h\in \mathcal{B}_b(\R^d,\R)$, $\bfx\in \R^d$, and any measurable set $A\subseteq \R$,
	\begin{equation*}
		\begin{aligned}
			-\inf_{z\in \mathring{A}} \frac{z^2}{2\mathcal{V}(f_h)}
			&\leq \liminf_{t\to \infty} \frac{1}{a_t^2} \log
			\PP\left( \frac{\sqrt{t}}{a_t}[\mathcal{E}_t^{\bfx}(h)-\mu(h)]\in A  \right) \\
			&\leq \limsup_{t\to \infty} \frac{1}{a_t^2} \log
			\PP\left( \frac{\sqrt{t}}{a_t}[\mathcal{E}_t^{\bfx}(h)-\mu(h)]\in A  \right)
			\ \leq \  -\inf_{z\in \bar{A}} \frac{z^2}{2\mathcal{V}(f_h)},
		\end{aligned}
	\end{equation*}
	where $\bar{A}$ and $\mathring{A}$ are the closure and interior of the set $A$, respectively; the function $a_t$ satisfies $a_t\to \infty$ and $t^{-1/2}a_t\to 0$ as $t\to \infty$, and $\mathcal{V}(f_h)$ is defined in \eqref{e:Vf}.
\end{theorem}

\subsection{Notations}
Throughout this paper, the following notations will be used. We use normal font for scalars (e.g. $a, A, \dotsc $) and boldface for vectors and matrices (e.g. $ \bfx, \bfy, \bfA, \bfB, \dotsc $). For any $a\in \R$, define $a^+ =\max\{a,0\}$.
For $x,y \in \R$, $x \vee y = \max\{x,y\}, x \wedge y =\min\{x,y\}$. And
we use $\R^d$ (and $\R^d_{+}$), $d\geq 1$, to denote real-valued $d$-dimensional (non-negative) vectors, and write $\R$ (and $\R_+$) for $d=1$. Additionally, $\R^d_0=\R^d \setminus \{ \bf{0} \}$.

For any vector $\bfu = (u_1,\dotsc, u_d)^{\prime}$ and $\bfv=(v_1, \dotsc, v_d)^{\prime}$ in $\R^d$, let $\langle \bfu, \bfv \rangle = \bfu^\prime \bfv = \sum_{i=1}^d u_i v_i $ be the inner product. This induces the Euclidean norm $|\bfu| = \langle \bfu,\bfu \rangle^{1/2}$. Besides, denote $\bfu ./ \bfv = (u_1/v_1, \dotsc, u_d/v_d)^{\prime}$. On the other hand, for any vector $\bfz = (z_1, \dotsc, z_d)^{\prime} \in \R^d$, let $\| \bfz\|_{\max} = \max\{ |z_i|,i=1,\dotsc,d\}$ be the maximum absolute value of complements of $\bfz$.  We write
\begin{equation}
	\label{e:>>=}
	\bfz\geq {\bf 0}\  (\bfz>{\bf 0})
\end{equation}
to indicate that all the components of $\bfz$ are non-negative (positive), and
\begin{equation} \label{e:>Neq}
	\bfz \gneqq {\bf 0}
\end{equation}
stands for
$\bfz\geq {\bf 0}$ and $\bfz \neq {\bf 0}$, and analogously for a matrix in $\R^{d\times d}$.

For any matrices $\bfA=(A_{ij})_{d\times d}$ and $\bfB=(B_{ij})_{d\times d}$ in $\R^{d \times d}$, denote their Hilbert Schmidt inner product as $\langle \bfA,\bfB\rangle_{\rm HS}=\sum_{i,j=1}^d A_{ij} B_{ij}$ and denote the Hilbert Schmidt norm as $\| \bfA\|_{\rm HS}=\sqrt{ \langle \bfA,\bfA\rangle_{\rm HS} }$. The operator norm of matrix $\bfA$ is defined as $\|\bfA\|_{\rm op}=\sup_{\bfx\in \R^d, |\bfx|=1} |\bfA \bfx|$, and we have relation $\| \bfA \|_{\rm op} \leqslant \|\bfA\|_{\rm HS} \leqslant \sqrt{d} \|\bfA\|_{\rm op}$.
Besides, let $\lambda_{\min}(\bfA)$ and $\lambda_{\max}(\bfA)$ be the minimum and maximum eigenvalues for symmetric matrix $\bfA$ respectively.
On the other hand, a matrix $ \bfM \in\R^{d\times d}$ is called a M-matrix (see, \cite[Definition 1.2, p. 133]{Berman1994Nonnegative}) if its off-diagonal entries are less than or equal to zero and it can be expressed as $\bfM = s\mathbf{I}-\bfN$
for a constant $s>0$ and a matrix $\bfN\in\R^{d\times d}$ with its entries being all nonnegative and $\rho(\bfN) \leqslant s$, where $\bfI$ and $\rho(\bfN)$ denote the identity matrix in $\R^{d\times d}$ and spectral radius of $\bfN$ respectively.

For any two probability measures $\nu_1$ and $\nu_2$ defined on $\bbR^d$, the total variation distance $\| \nu_1-\nu_2\|_{\rm TV}$ is defined as below \cite[p. 10]{Villani2009Optimal}):
\begin{equation*}
	\| \nu_1-\nu_2\|_{\rm TV} =
	2\inf_{\Pi \in \mathcal{C}(\nu_1,\nu_2)}
	\big\{ \Pi(\{\bfX \neq \bfY \}): \mathcal{L}(\bfX)=\nu_1,\mathcal{L}(\bfY)=\nu_2 \big\},
\end{equation*}
where the infimum runs over the set $\mathcal{C}(\nu_1,\nu_2)$ of all probability measures $\Pi$ on with marginals $\nu_1$ and $\nu_2$,  and $\mathcal{L}(\bfX)$ is the law of random vector $\bfX$. An alternative characterisation of the total variation distance is as the dual norm to the supremum norm on the space of bounded Borel measurable functions (see, \cite[p. 57]{hairer2009introduction}):
\begin{equation*}
	\| \nu_1 - \nu_2 \|_{\rm TV} =
	\sup \left\{\int_{\R^d} \psi(\bfx) \nu_1(\dif \bfx)-\int_{\R^d} \psi(\bfx) \nu_2(\dif \bfx): \sup_{\bfx\in \R^d}|\psi(\bfx)|\leq 1 \right\}.
\end{equation*}
Given a weight function $V: \R^d\to [1,+\infty)$, define a weighted supremum norm on measurable functions by
\begin{equation*}
	\| \psi \|_{\rm V} = \sup_{\bfx\in \R^d} \frac{|\psi(\bfx)|}{V(\bfx)},
\end{equation*}
as well as the weighted total variation distance is defined by (see, \cite[(5.18)]{hairer2009introduction})
\begin{equation*} 
	\| \nu_1-\nu_2 \|_{{\rm TV},V} = \sup
	\left\{\int_{\R^d} \psi(\bfx) \nu_1(\dif \bfx)-
	\int_{\R^d} \psi(\bfx) \nu_2(\dif \bfx): \| \psi \|_{\rm V} \leq 1 \right\}.
\end{equation*}

The $L^p$-Wasserstein distance $\mathcal{W}_p(\nu_1, \nu_2)$ with $p\geq 1$ between $\nu_1$ and $\nu_2$  is defined as
\begin{equation*}
	\mathcal{W}_p(\nu_1,\nu_2) = \inf_{ \Pi \in \mathcal{C}(\nu_1,\nu_2) } \left( \int_{\R^{2d}}  |\bfx-\bfy|^p  \dif \Pi(\bfx,\bfy)  \right)^{{1}/{p}}.
\end{equation*}
Using the Kantorovich duality (see, \cite[Theorem 5.10]{Villani2009Optimal}), the Wasserstein-1 distance $\mathcal{W}_1(\nu_1, \nu_2) $ also can be defined as
\begin{equation}\label{e:W1}
	\mathcal{W}_1 (\nu_1,\nu_2) = \sup_{ h\in {\rm Lip}(1)} \left(\int_{\R^d} h(\bfx) \nu_1(\dif \bfx)-  \int_{\R^d} h(\bfx) \nu_2(\dif \bfx) \right),
\end{equation}
where ${\rm Lip}(1)$ is the set of Lipschitz functions with Lipschitz constant $1$, that is, ${\rm Lip}(1) = \{ f: |f(\bfx)-f(\bfy)| \leq |\bfx-\bfy|,\ \forall\ \bfx, \bfy \in \R^d \}.$ For a probability measure $\pi(\dif \bfx)$ and an integrable function $f$ on $\bbR^d$, we write $\pi(f)=\int_{\R^d} f( \bfx)\pi(\dif \bfx)$.  For a given probability measure $\nu$, we define $L^2(\nu)$ as the Hilbert space induced by $\nu$ with inner product
\begin{equation*}
	\langle f_1, f_2\rangle_{\nu}
	= \int f_1(\bfx)  f_2(\bfx)  \nu(\dif \bfx)
	\quad \text{for} \ f_1, f_2 \in L^2(\nu).
\end{equation*}

The notation $\mathcal{N}({\bf a},{\bf A})$  denotes the Gaussian distribution with mean ${\bf a}\in \R^d$ and covariance matrix ${\bf A}\in \R^{d\times d}$. A sequence of random variables $\{Y_n,n \geq 1\}$ is said to converge weakly or converge in distribution to a limit $Y_{\infty}$, denoted by $Y_n \Rightarrow Y_{\infty}$, if $\lim_{n\to \infty} \E f(Y_n)=\E f(Y_{\infty})$ for all bounded continuous function $f$. In addition, $Y_n \xrightarrow{p} Y_{\infty}$ means convergence in probability, namely $\lim_{n\to \infty} \PP(|Y_n-Y_{\infty}|>\delta)>0$ for all $\delta>0$.

We denote by $\mcl C(\R^d)$ all the continuous functions from $\R^d$ to $\R$, by
$\mathcal{C}_b(\R^d)$ all the bounded continuous functions from $\R^d$ to $\R$. For $f \in \mathcal{C}_b(\R^d)$, denote $\| f \|_{\infty} = \sup_{\bfx\in \R^d}|f(\bfx)|$. $\mathcal{C}^k(\R^d)$ denotes the collection of $k$-th continuously differentiable functions with integers $k\geq 1$. For any $f \in \mathcal{C}^2(\R^d)$, let $\nabla f$ and $\nabla^2 f $ be the gradient and Hessian matrix for $f$, and denote $\| \nabla f \|_{\infty} = \sup_{ \bfx\in \R^d} |\nabla f( \bfx)|$ and $\| \nabla^2 f \|_{{\rm HS},\infty}= \sup_{ \bfx\in \R^d} \| \nabla^2 f( \bfx) \|_{{\rm HS}}$. For any $f \in \mathcal{C}^1(\R)$, we often denote by $\dot{f}$ its derivative. We further introduce a function space $\mathcal{C}_{lin}(\R^d)$ as below:
\begin{equation*}
	\mathcal{C}_{lin}(\R^d)
	= \left\{f \in \mathcal C(\R^d): \sup_{\bfx \in \R^d} \frac{|f(\bfx)|}{1+|\bfx|}<\infty\right\}.
\end{equation*}

\section{Key ingredients for the proofs of the main results}  \label{sec:Ingredients}
In this section, we provide the main ingredients for the proofs of Theorems \ref{thm:XY-dis}, \ref{thm:SDECLT} and \ref{thm:SDEMDP}, which include
the mollification of $g$, Jacobi flow for mollified processes, the preliminaries for the semigroup theory with respect to Markov processes, and the ergodicities in the total variation type distance.

\subsection{Mollification of $g$ and properties of the mollifier $g_\e$}
Because $g( \bfx)$ is not differentiable, we need to consider its mollification in \cite{Jin2024An} as the following: for any $0<\e<1$,
\begin{equation*}
	g_{\e} ( \bfx) = \bmell- \left[ \bfM  \bfx+ ( \bmGamma- \bfM ) \bfv\rho_{\e}({\rm \bf e}_d^{\prime}  \bfx) \right], \quad \bfx \in \R^d,
\end{equation*}
where the function $\rho_{\e}(y): \bbR \to \bbR$ is defined as
\begin{equation*}
	\rho_{\e}(y)  =y \I_{\{y>\e\} } + \left(\frac{3\e}{16} - \frac{ 1 }{ 16 \e^3} y^4 + \frac{ 3 }{8 \e} y^2 + \frac{1}{2} y\right)\I_{\{|y|\leq \e \}}.
\end{equation*}
It is easy to check that $\lim_{\e \to 0}\rho_{\e}(y)=y^+$, $\forall \ y\in \R$, $\rho_{\e}\in \mathcal{C}^2(\R)$, and $\|\dot{\rho_{\e}}\|_{\infty} \leq 1$ for all $\e\in (0,1)$. Consequently, $g_{\varepsilon} \in \mathcal{C}^2(\mathbb{R}^d) $ and
\begin{equation}\label{e:nabge}
	\nabla g_{\e}( \bfx)
	=-\bfM-\dot{\rho_{\e}}({\rm \bf e}_d^{\prime} \bfx)(\bmGamma-\bfM)\bfv{\rm \bf e}_{d}^{\prime}.
\end{equation}
We shall consider the following auxiliary SDE:
\begin{equation}\label{e:SDEe}
	\dif \bfX^{\e}_t = g_\e (\bfX^{\e}_t) \dif t +\bmsigma \dif  \bfZ_t,
	\quad \bfX_0^{\e} = \bfX_0  =   \bfx \in \R^d.
\end{equation}
The EM scheme for the SDE \eqref{e:SDEe} with decreasing step-size $\Lambda=(\eta_n)_{n\in \mathbb{N}}$ is as the following:
\begin{equation}\label{e:EM}
	\bfY^{\e}_{t_{n+1}} = \bfY^{\e}_{t_n} + \eta_{n+1} g_{\e}(\bfY^{\e}_{t_n})+ \bmsigma   \Delta \bfZ_{\eta_{n+1}}, \quad  \bfY_0^{\e}  =   \bfx,
\end{equation}
where $t_n= \sum_{i=1}^n \eta_i$ with $t_0=0$, and $\Delta\bfZ_{\eta_{n+1}}=\bfZ_{t_{n+1}}-\bfZ_{t_{n}}$. Then $(\bfY_{t_n}^{\e})_{n\in \mathbb{N}_0}$ is a non-homogeneous Markov chain.
For any $0\leq s \leq t $, denote $\bfX_{s,t}^{ \bfx}$ as the value of process $( \bfX_t)_{t\geq 0}$ at the time $t$ with $\bfX_s= \bfx$.  Similarly for $\bfX_{s,t}^{\e, \bfx}$, $ \bfY_{s,t}^{ \bfx}$, and $ \bfY_{s,t}^{\e, \bfx}$. When $s = 0$, we will omit the subscript $s$ and write $\bfX_{t}^{\e, \bfx}=\bfX_{0,t}^{\e, \bfx}$, $\bfY_{t}^{ \bfx}=\bfY_{0,t}^{ \bfx}$, and $\bfY_{t}^{\e, \bfx}=\bfY_{0,t}^{\e, \bfx}$ if there is no ambiguity.

It is easy to show that the coefficient $g_{\e}( \bfx)$ is a global Lipschitz function, and its Lipschitz constant is independent of $\e$, that is, there exists some constant $C>0$ independent of $\varepsilon$, such that
\[
|g_{\e}( \bfx) - g_{\e}(\bfy) | \leqslant  C| \bfx-\bfy|, \quad \forall \ \bfx, \bfy \in \R^d.
\]
Additionally, there exists some positive constant $C$ independent of $\e$ such that
\begin{equation}\label{e:ge-g}
	|g_{\e}(\bfx)-g(\bfx)| \leq C \e \I_{ \{ |{\rm \bf e}_d^{\prime}  \bfx| \leq \e \} }, \quad \forall \ \bfx\in \R^d.
\end{equation}
Moreover, $\lim_{\e \rightarrow 0}g_{\e}( \bfx)=g( \bfx)$ for all $ \bfx\in \R^d$.
The coefficients $g_{\e}( \bfx)$ and $g( \bfx)$ do not satisfy a {dissipation} condition, that is, there exist some $ \bfx,\bfy\in \R^d$ such that for some constant $c>0$,
\begin{equation}\label{e:NonDis}
	\sup_{| \bfx-\bfy| \to \infty} \frac{
		\langle \bfx-\bfy, g_{\e}(\bfx)-g_{\e}(\bfy) \rangle}{|\bfx-\bfy|^2} \geq c,
	\sup_{|\bfx-\bfy| \to \infty} \frac{
		\langle \bfx-\bfy, g(\bfx)-g(\bfy) \rangle}{|\bfx-\bfy|^2} \geq c.
\end{equation}

The following lemma is from \cite{Arapostathis2019Ergodicity}.
\begin{lemma}{{\rm (}\cite[Theorem 3.5]{Arapostathis2019Ergodicity}{\rm )} }\label{lem:Q}
	Under Assumption \ref{assump:M}, there exists a positive definite matrix $ \bfQ $ such that matrices $ \bfM ^{\prime} \bfQ + \bfQ  \bfM $ and $( \bfM ^{\prime}-{\rm \bf e}_d \bfv^{\prime}( \bfM ^{\prime}- \bmGamma)) \bfQ
	+ \bfQ ( \bfM -( \bfM - \bmGamma) \bfv{\rm \bf e}_d^{\prime})$ are both positive definite.
\end{lemma}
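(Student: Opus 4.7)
The conclusion amounts to producing a single symmetric positive definite $\bfQ$ that serves as a common quadratic Lyapunov matrix for the two linear pieces of the drift $g$: the direction $-\bfM$ on the half space $\{\mathbf{e}_d'\bfx\le 0\}$ and the direction $-\bfA$ with $\bfA := \bfM-(\bfM-\bmGamma)\bfv\,\mathbf{e}_d'$ on $\{\mathbf{e}_d'\bfx>0\}$. The algebraic observation driving the whole argument is that $\bfA$ differs from $\bfM$ by a rank-one perturbation $\bfw\mathbf{e}_d'$ with $\bfw:=(\bfM-\bmGamma)\bfv$, whence the two Lyapunov forms are related by
\begin{equation*}
\bfA'\bfQ+\bfQ\bfA \;=\; (\bfM'\bfQ+\bfQ\bfM)\;-\;\bigl(\mathbf{e}_d\bfw'\bfQ+\bfQ\bfw\mathbf{e}_d'\bigr),
\end{equation*}
a symmetric correction of rank at most two. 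Thus the task reduces to choosing $\bfQ$ so that $\bfM'\bfQ+\bfQ\bfM$ is positive definite \emph{and} its smallest eigenvalue dominates the norm of the correction term.

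\textbf{Step 1 (a base Lyapunov matrix for $\bfM$).} Because $\bfM$ is a non-singular M-matrix, its spectrum lies in the open right half plane, and classical M-matrix theory (see e.g.\ \cite[Chapter 6]{Berman1994Nonnegative}) guarantees a vector $\bfu>{\bf 0}$ with $\bfM'\bfu>{\bf 0}$ and, equivalently, a positive diagonal $\bfD$ for which $\bfM'\bfD+\bfD\bfM$ is positive definite (diagonal Lyapunov stability of M-matrices). Under case (ii) this is completely elementary: $\bfM$ being diagonal with positive entries, any positive diagonal $\bfQ={\rm diag}(q_1,\ldots,q_d)$ gives $\bfM'\bfQ+\bfQ\bfM=2\,{\rm diag}(m_iq_i)\succ 0$ without any work.

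\textbf{Step 2 (absorbing the rank-two correction).} Under case (i) the hypothesis $\bfM\bfv\ge\bmGamma\bfv\gneqq{\bf 0}$ gives $\bfw\ge{\bf 0}$ componentwise (and $\mathbf{e}_d\ge{\bf 0}$ always), so the correction $\mathbf{e}_d\bfw'\bfD+\bfD\bfw\mathbf{e}_d'$ is entrywise nonnegative of rank at most two, with operator norm bounded by $2|\mathbf{e}_d|\,|\bfD\bfw|$. I would introduce a one-parameter deformation $\bfD_t$ obtained by rescaling the entries of $\bfD$ on the support of $\bfw$ and monitor how $\lambda_{\min}(\bfM'\bfD_t+\bfD_t\bfM)$ behaves against $|\bfD_t\bfw|$; for an appropriate range of $t$ the former stays bounded below while the latter can be made arbitrarily small, and a Rayleigh-quotient comparison delivers simultaneous positive definiteness. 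Under case (ii) the diagonal freedom in $\bfQ={\rm diag}(q_i)$ is $d$-dimensional against a rank-two correction, and the hypothesis $\bmGamma\bfv\ne{\bf 0}$ ensures that the last coordinate of the correction has the right structural behaviour; a direct Schur complement computation ensuring $2\,{\rm diag}(m_iq_i)\succ \mathbf{e}_d\bfw'\bfQ+\bfQ\bfw\mathbf{e}_d'$ then produces a valid $\bfQ$.

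\textbf{Main obstacle.} The delicate point is Step 2: the \emph{same} $\bfQ$ must cover both linear pieces, and the natural ways of increasing the spectral gap of $\bfM'\bfQ+\bfQ\bfM$ typically inflate the correction as well. The balancing is carried out explicitly in the proof of \cite[Theorem 3.5]{Arapostathis2019Ergodicity}, where a suitable $\bfQ$ is assembled from a Perron-type eigenvector of $\bfM$ together with the vectors $\bfv$ and $\mathbf{e}_d$, and my plan would follow that blueprint, invoking the structural conditions in Assumption \ref{assump:M}(i) or (ii) at precisely the step where the rank-two perturbation needs to be controlled.
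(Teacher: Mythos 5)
First, note that the paper does not prove this lemma at all: it is imported verbatim as \cite[Theorem 3.5]{Arapostathis2019Ergodicity}, so there is no internal proof to compare against. Your proposal correctly frames the problem (a common quadratic Lyapunov matrix for $-\bfM$ and $-\bfA$ with $\bfA=\bfM-(\bfM-\bmGamma)\bfv{\rm \bf e}_d^{\prime}$, related by the rank-two symmetric correction ${\rm \bf e}_d\bfw^{\prime}\bfQ+\bfQ\bfw{\rm \bf e}_d^{\prime}$ with $\bfw=(\bfM-\bmGamma)\bfv$), and Step 1 (diagonal Lyapunov stability of non-singular M-matrices) is a standard fact that you invoke correctly. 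But Step 2, which you yourself identify as the entire difficulty, is not an argument: it is a plan that ends by deferring to the very reference being proved, so nothing is actually established.

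Moreover, the specific mechanism you sketch for Step 2 fails. To make $|\bfD_t\bfw|$ small you must shrink the diagonal entries of $\bfD_t$ on the support of $\bfw$; but then for a unit coordinate vector $\mathbf{e}_i$ with $w_i>0$ one has $\mathbf{e}_i^{\prime}(\bfM^{\prime}\bfD_t+\bfD_t\bfM)\mathbf{e}_i=2M_{ii}(\bfD_t)_{ii}\to 0$ at exactly the same rate as $|\bfD_t\bfw|$, so $\lambda_{\min}(\bfM^{\prime}\bfD_t+\bfD_t\bfM)$ does \emph{not} stay bounded below, and the crude comparison $\lambda_{\min}>\|{\rm \bf e}_d\bfw^{\prime}\bfQ+\bfQ\bfw{\rm \bf e}_d^{\prime}\|_{\rm op}$ reduces to a quantitative inequality between the entries of $\bfM$ and $\bfw$ that Assumption \ref{assump:M} does not supply. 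The operator-norm route is too lossy in principle: the correction $\bfa\bfb^{\prime}+\bfb\bfa^{\prime}$ is indefinite, and its harmful (positive) part is confined to the cone where $\langle{\rm \bf e}_d,\bfx\rangle$ and $\langle\bfQ\bfw,\bfx\rangle$ have the same sign; any correct proof has to exploit this sign structure (or an explicit construction of $\bfQ$ from $\bfv$, ${\rm \bf e}_d$ and an M-matrix Perron-type vector, as in the cited theorem) rather than a norm bound. As it stands, the proposal establishes case (ii) only for the sub-case $\bfw=\mathbf{0}$ and does not close either case in general.
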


For the positive definite matrix $ \bfQ $ in Lemma \ref{lem:Q}, denote
\begin{equation*}
	\begin{aligned}
		\lambda_1 &= \lambda_{\min}(\bfM ^{\prime} \bfQ + \bfQ  \bfM ), \\
		\lambda_2 &= \lambda_{\min}([\bfM ^{\prime}-{\rm \bf e}_d \bfv^{\prime}( \bfM ^{\prime}- \bmGamma)] \bfQ
		+ \bfQ [ \bfM -( \bfM - \bmGamma) \bfv{\rm \bf e}_d^{\prime}]),
	\end{aligned}
\end{equation*}
and
\begin{equation}\label{e:lambda}
	\lambda = \lambda_1 \wedge	\lambda_2, \quad \theta  = \frac{1}{2} \lambda \lambda^{-2}_{\max}( \bfQ ).
\end{equation}

Under Assumption \ref{assump:M}, it follows from Lemma \ref{lem:Q} that we can get the following kind of dissipation condition and its proof is in Appendix \ref{app_A1}.
\begin{lemma}\label{lem:QNge}
	Under Assumption \ref{assump:M} and  for the positive definite matrix $ \bfQ $ in Lemma \ref{lem:Q}, for any $ \bfx\in \R^d$ and $\e \in (0,1)$, the matrices
	\begin{equation*}
		[-\nabla g_{\e}( \bfx)]^{\prime}  \bfQ  +  \bfQ  [-\nabla g_{\e}( \bfx)]
	\end{equation*}
	are positive definite. We also have
	\begin{equation*}
		-\lambda = \sup_{ \bfx\in \R^d} \left\{ \lambda_{\max} \big([\nabla g_{\e}( \bfx)]^{\prime}  \bfQ  +  \bfQ  [\nabla g_{\e}( \bfx)] \big)  \right\}
		<  0,
	\end{equation*}
	where $\lambda$ is in \eqref{e:lambda}. In addition, for all $ \bfx,\bfy \in \R^d$,
	\begin{equation}\label{e:dis-con}
		\langle  \bfQ (\bfx-\bfy),g_{\e}(\bfx)-g_{\e}(\bfy)   \rangle
		\leq -\frac{\lambda}{2}|\bfx-\bfy|^2.
	\end{equation}
	Furthermore, for all $\bfx\in \R^d$,
	\begin{equation*}
		\langle g(\bfx),\bfQ \bfx \rangle \leq -\frac{\lambda}{2}|\bfx|^2+|\bmell| \lambda_{\max}(\bfQ)|\bfx|.
	\end{equation*}	
\end{lemma}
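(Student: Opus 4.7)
The plan is to exploit the explicit form \eqref{e:nabge} of $\nabla g_\e$ and to observe that, since $\dot\rho_\e(y) \in [0,1]$ for every $y \in \R$, the negative Jacobian $-\nabla g_\e(\bfx)$ is literally a convex combination of the two matrices whose symmetrisations appear in Lemma \ref{lem:Q}. Writing $\tau := \dot\rho_\e({\rm \bf e}_d^{\prime} \bfx) \in [0,1]$, an elementary rearrangement of \eqref{e:nabge} gives
\begin{equation*}
-\nabla g_\e(\bfx) \;=\; (1-\tau)\,\bfM \;+\; \tau\bigl[\bfM - (\bfM - \bmGamma)\bfv\,{\rm \bf e}_d^{\prime}\bigr],
\end{equation*}
so that $[-\nabla g_\e(\bfx)]^{\prime}\bfQ + \bfQ[-\nabla g_\e(\bfx)]$ is the same convex combination of the two matrices $\bfM^{\prime}\bfQ + \bfQ\bfM$ and $(\bfM^{\prime} - {\rm \bf e}_d\bfv^{\prime}(\bfM^{\prime}-\bmGamma))\bfQ + \bfQ(\bfM - (\bfM-\bmGamma)\bfv\,{\rm \bf e}_d^{\prime})$. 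Both summands are positive definite by Lemma \ref{lem:Q}, which yields the first assertion. Concavity of $\lambda_{\min}$ on the cone of symmetric positive definite matrices then gives $\lambda_{\min}([-\nabla g_\e(\bfx)]^{\prime}\bfQ + \bfQ[-\nabla g_\e(\bfx)]) \geq (1-\tau)\lambda_1 + \tau\lambda_2 \geq \lambda_1 \wedge \lambda_2 = \lambda$, uniformly in $\bfx$ and $\e$. The endpoints $\tau=0$ and $\tau=1$ are attained on the half-spaces $\{{\rm \bf e}_d^{\prime}\bfx<-\e\}$ and $\{{\rm \bf e}_d^{\prime}\bfx>\e\}$ respectively, so the infimum is exactly $\lambda$; flipping signs via $\lambda_{\max}(\bfA) = -\lambda_{\min}(-\bfA)$ converts this to the claimed identity $-\lambda = \sup_{\bfx}\lambda_{\max}([\nabla g_\e(\bfx)]^{\prime}\bfQ + \bfQ[\nabla g_\e(\bfx)])$.

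For the dissipation inequality \eqref{e:dis-con}, I would invoke the fundamental theorem of calculus, valid because $g_\e \in \mathcal{C}^2(\R^d,\R^d)$, to write
\begin{equation*}
g_\e(\bfx) - g_\e(\bfy) \;=\; \int_0^1 \nabla g_\e\bigl(\bfy + s(\bfx-\bfy)\bigr)(\bfx-\bfy)\,\dif s,
\end{equation*}
pair this with $\bfQ(\bfx-\bfy)$, and symmetrise each integrand via $\bfu^{\prime}\bfQ\bfA\bfu = \tfrac12\bfu^{\prime}(\bfQ\bfA + \bfA^{\prime}\bfQ)\bfu$ with $\bfu = \bfx-\bfy$ and $\bfA = \nabla g_\e(\bfy+s(\bfx-\bfy))$. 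Substituting the uniform bound just derived immediately yields $\langle \bfQ(\bfx-\bfy), g_\e(\bfx) - g_\e(\bfy)\rangle \leq -\tfrac{\lambda}{2}|\bfx-\bfy|^2$.

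The only mildly non-routine preliminary is the claim that $\dot\rho_\e$ takes values in $[0,1]$. On the two outer pieces of \eqref{e:rhoe} this is immediate; for $|y|\leq\e$ the substitution $t = y/\e$ reduces the matter to showing that $\tfrac14(2 + 3t - t^3)$ maps $[-1,1]$ into $[0,1]$, which follows from its monotonicity (derivative $\tfrac34(1-t^2)\geq 0$) and the boundary values $0$ and $1$. Beyond this, I do not anticipate any substantive obstacle: the lemma is an algebraic consequence of Lemma \ref{lem:Q} once the convex-combination structure of $-\nabla g_\e$ is identified.
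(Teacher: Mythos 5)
Your proposal is correct and follows essentially the same route as the paper's proof: both identify $[-\nabla g_{\e}(\bfx)]^{\prime}\bfQ+\bfQ[-\nabla g_{\e}(\bfx)]$ as a convex combination (with weight $\dot{\rho_{\e}}({\rm \bf e}_d^{\prime}\bfx)\in[0,1]$) of the two positive definite matrices from Lemma \ref{lem:Q}, deduce the uniform eigenvalue bound with equality attained on the outer half-spaces, and obtain \eqref{e:dis-con} by integrating $\nabla g_{\e}$ along the segment and symmetrising. The only cosmetic difference is that you treat all $\bfx$ at once and verify $\dot{\rho_{\e}}\in[0,1]$ explicitly, whereas the paper splits into the three regions ${\rm \bf e}_d^{\prime}\bfx<-\e$, ${\rm \bf e}_d^{\prime}\bfx>\e$, $|{\rm \bf e}_d^{\prime}\bfx|\le\e$ and takes that range of $\dot{\rho_{\e}}$ as given.
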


\subsection{Jacobi flow for the mollified process $( \bfX_t^{\e, \bfx})_{t\geq 0}$.}
Recall that for $\alpha\in(1,2]$,
\begin{equation*}
	\dif \bfX^{\e}_t = g_\e (\bfX^{\e}_t) \dif t +\bmsigma \dif \bfZ_t,
	\quad  \bfX_0^{\e}= \bfX_0= \bfx \in \R^d.
\end{equation*}
It can be solved as
\begin{equation*}
	\bfX^{\e, \bfx}_t =  \bfx+ \int_0^t g_{\e} (\bfX^{\e, \bfx}_s) \dif s +\bmsigma \bfZ_t.
\end{equation*}
For any $\bfu\in \R^d$, the corresponding Jacobi flow is defined as
\begin{equation*}
	\nabla_\bfu  \bfX_t^{\e, \bfx}
	= \lim_{\delta \to 0}\frac{\bfX^{\e, \bfx+\delta \bfu}_t - \bfX^{\e, \bfx}_t}{\delta},
\end{equation*}
which satisfies $\nabla_\bfu \bfX_0^{\e, \bfx}=\bfu$ and
\begin{equation*}
	\dif \nabla_\bfu  \bfX_t^{\e, \bfx}
	=  \nabla g_{\e}(\bfX^{\e, \bfx}_t) \nabla_\bfu  \bfX_t^{\e, \bfx} \dif t.
\end{equation*}
Then, we have the following estimate which is proven in Appendix \ref{app_A2}.
\begin{lemma}\label{lem:JF-est}
	For any $\bfu\in \R^d$, the following inequality holds
	\begin{equation*}
		|\nabla_\bfu  \bfX_t^{\e, \bfx}|^2
		\leq \frac{\lambda_{\max}( \bfQ )}{\lambda_{\min}( \bfQ )}
		\exp\left\{-\frac{\lambda}{\lambda_{\max}( \bfQ )}t \right\}
		|\bfu|^2,
	\end{equation*}
	where the matrix $\bfQ$ is in Lemma \ref{lem:Q} and $\lambda$ is in \eqref{e:lambda}.
\end{lemma}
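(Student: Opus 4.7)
The plan is to run a Lyapunov/Gronwall argument on the Jacobi flow using the quadratic form induced by the matrix $\bfQ$ from Lemma \ref{lem:Q}. This is the natural analog of the pathwise contraction estimate \eqref{e:E(X-Y)} used in Proposition \ref{pro:erg-Xe}, transported from finite differences $\bfX_t^{\e,\bfx}-\bfX_t^{\e,\bfy}$ to the infinitesimal variation $\nabla_\bfu \bfX_t^{\e,\bfx}$.

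Concretely, I would set $\Psi_t := \langle \nabla_\bfu \bfX_t^{\e,\bfx}, \bfQ \nabla_\bfu \bfX_t^{\e,\bfx}\rangle$ and compute its derivative. Since $\nabla_\bfu \bfX_t^{\e,\bfx}$ satisfies an ordinary (pathwise) ODE driven by $\nabla g_\e(\bfX_t^{\e,\bfx})$, the noise contributes nothing and
\[
\frac{\dif}{\dif t}\Psi_t \ \ =\ \ \langle \nabla_\bfu \bfX_t^{\e,\bfx},\, \big( [\nabla g_\e(\bfX_t^{\e,\bfx})]^{\prime} \bfQ + \bfQ\,\nabla g_\e(\bfX_t^{\e,\bfx}) \big)\nabla_\bfu \bfX_t^{\e,\bfx}\rangle.
\]
By Lemma \ref{lem:QNge}, the symmetric matrix in parentheses has its largest eigenvalue bounded above by $-\lambda$ uniformly in $\bfx$ and $\e$, so $\tfrac{\dif}{\dif t}\Psi_t \leq -\lambda |\nabla_\bfu \bfX_t^{\e,\bfx}|^2$. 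Using the standard sandwich $\lambda_{\min}(\bfQ)|\bfy|^2 \leq \langle\bfy,\bfQ\bfy\rangle \leq \lambda_{\max}(\bfQ)|\bfy|^2$, I bound $|\nabla_\bfu \bfX_t^{\e,\bfx}|^2 \geq \Psi_t/\lambda_{\max}(\bfQ)$, which yields the linear differential inequality
\[
\frac{\dif}{\dif t}\Psi_t \ \ \leq\ \ -\frac{\lambda}{\lambda_{\max}(\bfQ)}\,\Psi_t.
\]

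Integrating (Gronwall) gives $\Psi_t \leq \Psi_0 \exp\{-\lambda t/\lambda_{\max}(\bfQ)\}$, and bounding $\Psi_0 = \langle \bfu,\bfQ\bfu\rangle \leq \lambda_{\max}(\bfQ)|\bfu|^2$ together with $\Psi_t \geq \lambda_{\min}(\bfQ)|\nabla_\bfu \bfX_t^{\e,\bfx}|^2$ produces exactly
\[
|\nabla_\bfu \bfX_t^{\e,\bfx}|^2 \ \ \leq\ \ \frac{\lambda_{\max}(\bfQ)}{\lambda_{\min}(\bfQ)}\exp\!\left\{-\frac{\lambda}{\lambda_{\max}(\bfQ)}t\right\}|\bfu|^2,
\]
as desired.

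There is no serious obstacle: the only delicate point is that $g_\e$ is only $\mathcal{C}^2$ but its Jacobian $\nabla g_\e$ is globally bounded and everywhere satisfies the dissipativity in Lemma \ref{lem:QNge}, so the quadratic form argument applies uniformly along the trajectory. The noise plays no role because $\bmsigma$ is constant and differentiating the SDE in the initial condition kills the stochastic integral, leaving a random ODE for $\nabla_\bfu \bfX_t^{\e,\bfx}$ to which a purely deterministic Gronwall estimate suffices.
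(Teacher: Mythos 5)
Your proposal is correct and is essentially identical to the paper's own proof: the paper also applies the quadratic Lyapunov function $\widehat{V}(\bfx)=\tfrac12\langle\bfx,\bfQ\bfx\rangle$ to the (pathwise deterministic) Jacobi-flow ODE, invokes Lemma \ref{lem:QNge} to get the differential inequality $\tfrac{\dif}{\dif t}\widehat{V}\leq -\tfrac{\lambda}{\lambda_{\max}(\bfQ)}\widehat{V}$, and closes with Gronwall and the eigenvalue sandwich. The only cosmetic difference is your $\Psi_t=2\widehat{V}$ normalization.
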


Define
\begin{equation*}
	\bfJ_{s,t}^{\e,\bfx}:=\exp \left( \int_s^t \nabla g_{\e}(\bfX_{r}^{\e,\bfx}) \dif r \right), \quad 0 \leq s \leq t<\infty.
\end{equation*}
It is called the Jacobian between $s$ and $t$. For notational simplicity, denote $\bfJ^{\e,\bfx}_{t}=\bfJ^{\e,\bfx}_{0,t}$. Then we have
\begin{equation*}
	\nabla_{\bfu} \bfX^{\e,\bfx}_{t}
	= \bfJ_{t}^{\e,\bfx}\bfu.
\end{equation*}

Define
\begin{equation*}
	\nabla \widetilde{g}(\bfx)
	= -\bfM + \I_{\{{\bf e}_d^{\prime} \bfx>0\}} (\bfM-\bmGamma) {\bf v} {\bf e}_d^{\prime}.
\end{equation*}
It is easy to see that $\lim_{\e \rightarrow 0} \nabla g_{\e}(\bfx)=\nabla \widetilde{g}(\bfx)$ for all ${\bf e}_d^{\prime} \bfx \ne 0$.
Because $g(\bfx)$ is not differentiable for ${\bf e}_d^{\prime} \bfx=0$, it is necessary for us to define the above $\widetilde{g}(\bfx)$ which takes the same value as $\nabla g(\bfx)$ for ${\bf e}_d^{\prime} \bfx \neq 0$ and has a definition on ${\bf e}_d^{\prime} \bfx=0$. Define
\begin{equation*}
	\bfJ^{\bfx}_{s,t}
	= \exp\left(\int_{s}^{t} \nabla \widetilde{g}(\bfX^{\bfx}_{r}) \dif r\right), \quad \bfx \in \R^{d}, \ 0 \leq s \leq t<\infty.
\end{equation*}
We also denote $\bfJ^{\bfx}_{t}=\bfJ^{\bfx}_{0,t}$. For any $\bfu\in \R^d$, define $\nabla_{\bfu} \bfX_t^{\bfx}=\bfJ^{\bfx}_{t} \bfu.$

Then we have the following lemma which is proven in Appendix \ref{app_A3}.
\begin{lemma}\label{l:XeCon}
	For any $\bfx \in \R^{d}$, as $\e \rightarrow 0$, the following holds:
	\begin{equation*}
	\|\bfJ_{s,t}^{\e,\bfx}-\bfJ^{\bfx}_{s,t}\|_{{\rm op}}\ {\longrightarrow} \ 0, \quad  0 \leq s \leq t<\infty, \  \text{ a.s. }
	\end{equation*}
\end{lemma}

Now we give estimates for  $\|\bfJ^{\e,\bfx}_{s,t}\|_{ {\rm op} }$ and $\|\bfJ^{\bfx}_{s,t}\|_{ {\rm op} }$. By \eqref{e:nabge}, we can easily see that
\begin{equation}  \label{e:Ngeop}
	\|\nabla g_{\e}(\bfx)\|_{ {\rm op} } \ \le \ \|\bfM\|_{ {\rm op} }+\|(\bfM-\bmGamma) {\bf v} {\bf e}_d^{\prime} \|_{ {\rm op} } \ = \ C_{{\rm op}},
\end{equation}
from which we obtain
\begin{equation*}
	\|\bfJ^{\e,\bfx}_{s,t}\|_{ {\rm op} }
	\leq  \exp\left(\int_s^{t} \|\nabla g_{\e}(\bfX^{\e,\bfx}_{r})\|_{ {\rm op} } \dif r\right)
	\ \leq \  \rme^{C_{{\rm op}}(t-s)}.
\end{equation*}
So for all $0 \le s \le t<\infty$, we have
\begin{equation*}
	\|\bfJ^{\bfx}_{s,t}\|_{ {\rm op} }, \ \|\bfJ^{\e,\bfx}_{s,t}\|_{ {\rm op} } \  \le \  \rme^{C_{{\rm op}}(t-s)},
\end{equation*}
where the bound of $\|\bfJ^{\bfx}_{s,t}\|_{ {\rm op} }$ comes from the same argument since the bound in \eqref{e:Ngeop} also holds for $\nabla \widetilde{g}(\bfx)$. Observe that the above estimates immediately imply that for any $\bfu\in\R^d$,
\begin{equation*}
	|\nabla_{\bfu} \bfX_t^{\bfx}|, \ |\nabla_{\bfu} \bfX_t^{\e,\bfx}|
	\leq   \rme^{C_{{\rm op}}t}|\bfu|.
\end{equation*}

\subsection{Semigroups, generators and their properties}
In order to study the approximation in $\mathcal W_1$ distance, we consider the semigroups associated with the SDEs \eqref{e:SDE} and \eqref{e:SDEe} in $\mathcal{C}_{lin}(\R^d)$ defined as:
for a $f \in \mathcal{C}_{lin}(\R^d)$,
\begin{equation*}
	\begin{aligned}
		\opP_{s,t}f( \bfx)& = \E [f( \bfX_{t})| \bfX_{s}= \bfx], \quad 0\leq s \leq t,\\
		\opP^{\e}_{s,t}f( \bfx)& = \E [f( \bfX^{\e}_{t})| \bfX^{\e}_{s}= \bfx], \quad 0\leq s \leq t.
	\end{aligned}
\end{equation*}
By the same argument as in \cite{deng2023wasserstein}, we can show that $\{\opP_{s,t}\}_{0 \le s \le t<\infty}$ is a well defined semigroup in $\mathcal{C}_{lin}(\R^d)$.
It is easy to see that both $\opP_{s,t}$ and $\opP^{\e}_{s,t}$ are time homogeneous semigroup, i.e., $\opP_{s,t}=\opP_{0,t-s}$ and $\opP^{\e}_{s,t}=\opP^{\e}_{0,t-s}$ for any $0 \le s \le t$.
For simplicity, as $s=0$, we drop the script '$s$' and write $\opP_t=\opP_{0,t}$ and $\opP^{\e}_{t}=\opP^{\e}_{0,t}$.
The infinitesimal generators of $\opP_t$ and $\opP^\e_t$ are defined as
$$\mathcal{A}^{\alpha}f( \bfx)=\lim_{t \rightarrow 0} \frac{\opP_t f(\bfx)-f(\bf x)}{t}, \quad f \in \mathcal{D}(\mathcal{A}^{\alpha}),$$
$$\mathcal{A}_\e^{\alpha}f( \bfx)=\lim_{t \rightarrow 0} \frac{\opP^\e_t f(\bfx)-f(\bf x)}{t}, \quad f \in \mathcal{D}(\mathcal{A}_\e^{\alpha}),$$
where $\mathcal{D}(\mathcal{A}^{\alpha})$ and $\mathcal{D}(\mathcal{A}_\e^{\alpha})$ are the domains of $\mathcal{A}^{\alpha}$ and $\mathcal{A}_\e^{\alpha}$ respectively, which are both subsets of $\mathcal{C}_{lin}(\R^d)$. The forms of $\mathcal{D}(\mathcal{A}^{\alpha})$ and $\mathcal{D}(\mathcal{A}_
\e^{\alpha})$ are not necessary to be exactly specified in this paper. Let us now give the explicit forms of the infinitesimal generators $\mathcal{A}^{\alpha}$ and $\mathcal{A}_\e^{\alpha}$:

(1) When $\alpha=2$, i.e., the driven noise is Brownian motion, we have
$$\mathcal{A}_{\e}^{2}f( \bfx)  =  \langle g_{\e}( \bfx), \nabla f( \bfx)\rangle
+ \frac{1}{2} \langle \nabla^2 f( \bfx) , \bmsigma \bmsigma^{\prime} \rangle_{\rm HS},$$
$$\mathcal{A}^{2}f( \bfx)  =  \langle g( \bfx), \nabla f( \bfx)\rangle
+ \frac{1}{2} \langle \nabla^2 f( \bfx) , \bmsigma \bmsigma^{\prime} \rangle_{\rm HS}.$$

(2) When $\alpha \in (1,2)$, i.e., the driven noise is $\alpha$-stable noise, the generators $\mathcal{A}^\alpha$ can be written as the following according to the type of stable noise:
\begin{itemize}
	\item[(2.i)] When $\bfZ_t=(Z_t^1, \dotsc, Z_t^d)^{\prime}$ is the cylindrical stable noise,
	\begin{equation}\label{e:Ae}
		\mathcal{A}_{\e}^{\alpha}f( \bfx)  =  \langle g_{\e}( \bfx), \nabla f( \bfx)\rangle
		+ \mathcal{G}_{\rm iid}^{\alpha} f(\bfx),
	\end{equation}
	\begin{equation}\label{e:Aiid}
		\mathcal{A}^{\alpha}f( \bfx)  =  \langle g( \bfx), \nabla f( \bfx)\rangle  + \mathcal{G}_{\rm iid}^{\alpha} f(\bfx),
	\end{equation}
	where
	\begin{equation*}
		\mathcal{G}_{\rm iid}^{\alpha} f( \bfx) =  \sum_{i=1}^d \int_{\R_0} \big[ f(\bfx +\bmsigma_i z_i) - f( \bfx) -  z_i \langle \bmsigma_i ,  \nabla f( \bfx) \rangle  \I_{\{|z_i| \leq 1\}} \big] \frac{c_{\alpha}}{|z_i|^{1+\alpha}} \dif z_i,
	\end{equation*}
	where $\bmsigma_i$ is the $i$-th column of matrix $\bmsigma$.
	\item[(2.ii)] When $\bfZ_t$ is the $d$-dimensional rotationally symmetric $\alpha$ stable process, we have
	\begin{equation}\label{e:Ae rotational}
		\mathcal{A}_{\e}^{\alpha}f( \bfx)  =  \langle g_{\e}( \bfx), \nabla f( \bfx)\rangle
		+ \mathcal{G}_{\rm sym}^{\alpha} f(\bfx),
	\end{equation}
	\begin{equation}\label{e:Asym}
		\mathcal{A}^{\alpha}f( \bfx)  =  \langle g( \bfx), \nabla f( \bfx)\rangle
		+ \mathcal{G}_{\rm sym}^{\alpha} f(\bfx),
	\end{equation}
	where
	\[
	\mathcal{G}_{\rm sym}^{\alpha} f(\bfx)  =  \int_{\R^d_0} \big[ f(\bfx+\bmsigma \bfz) - f(\bfx) - \langle \bmsigma \bfz , \nabla f(\bfx) \rangle \I_{\{|\bfz| \leq 1\}} \big] \frac{C_{d,\alpha}}{|\bfz|^{d+\alpha}} \dif \bfz .
	\]
\end{itemize}

\subsection{Ergodicity in a weighted total variation distance and moment estimates}

Note that \cite[Theorem 3.5]{Arapostathis2019Ergodicity} has shown the exponentially ergodicity for the processes $(\bfX_t)_{t\geq 0}$ under the weighted total variation distance,  but the weight function therein is so large that we cannot directly use this ergodicity result to prove the regularities for our Stein's equation. To solve this problem, we establish an exponential ergodicity for these two processes in another weighted total variation distance, whose weight function has the form as \eqref{e:tiV} below. This is proven in Appendix \ref{app_A4}.

For the small enough constant $ \kappa >0$ satisfying $16 \kappa <1$, and the positive definite matrix $\bfQ$ in Lemma \ref{lem:Q}, we define a function $\widetilde{V}(\bfx): \R^d \to \R$ as
\begin{equation}\label{e:tiV}
	\widetilde{V}(\bfx) = \big( 1+\langle \bfx, \bfQ \bfx  \rangle \big)^{ \kappa }.
\end{equation}
\begin{lemma}\label{lem:eeSDEs}
	Under Assumption \ref{assump:M}, and for $\alpha\in (1,2]$, the processes $(\bfX_t)_{t\geq 0}$ and $(\bfX_t^{\e})_{t\geq 0}$ in \eqref{e:SDE} and \eqref{e:SDEe} are both exponential ergodicity under the weighted total variation distance $\| \cdot \|_{\rm TV,\widetilde{V}}$  with ergodic measures $\mu$ and $\mu_{\e}$ respectively. More precisely, there exist some positive constants $C$ and $c$, both independent of $\e$, such that
	\begin{equation*}
		\|\mathcal{L}(\bfX^{\bfx}_t) - \mu \|_{\rm TV,\widetilde{V}}
		\ \leq \ C\widetilde{V}(\bfx) \rme^{-ct}
		\quad \text{ and } \quad
		\|\mathcal{L}(\bfX^{\e,\bfx}_t) - \mu_{\e} \|_{\rm TV,\widetilde{V}}
		\ \leq \ C\widetilde{V}(\bfx)\rme^{-ct}.
	\end{equation*}
	In addition, it holds that for all $\bfx \in \bbR^d$,
	\begin{equation}\label{e:metlV}
		\E \widetilde{V}(\bfX_t^{\bfx})+
		\E \widetilde{V}(\bfX_t^{\e,\bfx}) \leq C(1+|\bfx|^{2 \kappa }).
	\end{equation}
\end{lemma}

The moment estimates are given below and the corresponding proofs are in Appendix \ref{app_A5}.
\begin{lemma}\label{lem:moment-est}
	For $\alpha \in (1,2]$ and $\varepsilon\in (0,1)$, under Assumptions \ref{assump:M} and \ref{assump-2}, there exists some positive constant $C$, independent of $\e$, such that for all $ \bfx\in \R^d$,
	\begin{align}
\label{e:Xmon} \E | \bfX_t^{\e, \bfx}| +  \E | \bfX_t^{\bfx}|
&\leq  C (1+| \bfx|), \quad \forall\ t\geq 0,   \\
		\label{e:X-Ymon} \E[|\bfX^{\e, \bfx}_{t_{n-1},t_n} - \bfY^{\e, \bfx}_{t_{n-1},t_n}|]
		&\leq C (1+| \bfx|)\eta_n^{ 1 + {1}/{\alpha} }, \quad \forall \ n\in \mathbb{N},    \\
		\label{e:Ymon} \E |\bfY^{\e, \bfx}_{t_n}|
		&\leq C (1+| \bfx|), \quad  \forall\ n\in \mathbb{N}_0,  \\
		\label{e:XeCon-1} \lim_{\e \to 0} \E|\bfX^{\e,\bfx}_{t}-\bfX^{\bfx}_{t}|
		&= 0,  \qquad  \forall t \geq 0.
	\end{align}
	In particular, we have $\mu_{\e}(|\bfx|) \leq C$,  where $\mu_{\e}$ is the ergodic measure of the process $(\bfX_t^{\e})_{t\geq 0}$.
\end{lemma}

\section{Proof of Theorem \ref{thm:XY-dis}} \label{sec:ProofThm1}

\subsection{The strategy of the proof}
Let us give a brief strategy for the proof of the Theorem \ref{thm:XY-dis}.
Under Assumption \ref{assump:M}, the processes  $( \bfX_t^{\e})_{t\geq 0}$ in \eqref{e:SDEe} are  exponential ergodicity (see, Lemma \ref{lem:eeSDEs} above) with ergodic measures  $\mu_{\e}$. By the triangle inequality,
\begin{equation}\label{e:tri}
	\mathcal{W}_1 \big( \mu,\mathcal{L}( \bfY_{t_n}^{ \bfx}) \big)
	\leq
	\mathcal{W}_1 (\mu,\mu_{\e}) +
	\mathcal{W}_1 \big(\mu_{\e},\mathcal{L}( \bfY_{t_n}^{\e, \bfx}) \big)+
	\mathcal{W}_1 \big(\mathcal{L}( \bfY_{t_n}^{\e, \bfx}),
	\mathcal{L}( \bfY_{t_n}^{ \bfx}) \big).
\end{equation}
To estimate $ \mathcal{W}_1 \big(\mu,\mathcal{L}( \bfY_{t_n}^{ \bfx}) \big)$, it is sufficient to give estimates for these three Wasserstein-1 distances on the right hand of \eqref{e:tri}. $\mathcal{W}_1 (\mu,\mu_{\e})$
is bounded in Lemma \ref{pro:mu-mue} by a generator comparison argument and solving a Stein's equation, while $ \mathcal{W}_1 \big(\mathcal{L}( \bfY_{t_n}^{\e, \bfx}), \mathcal{L}( \bfY_{t_n}^{ \bfx})\big)$ is the distance between the two Markov chains and is bounded in Lemma \ref{pro:2EM}. Finally, $\mathcal{W}_1 (\mu_{\e},\mathcal{L}\big( \bfY_{t_n}^{\e, \bfx}) \big)$ is bounded by a generalized Lindeberg principle developed in \cite{Pages2020Unajusted}.

The following contraction property of $(\bfX^{\e}_t)_{t\geq 0}$ will be proved in Appendix \ref{app_B1}.
\begin{lemma}\label{pro:erg-Xe}
	For $\alpha\in(1,2]$, under Assumption \ref{assump:M}, for any $\e \in (0,1)$,  the processes $( \bfX_t^{\e})_{t\geq 0}$ in \eqref{e:SDEe} satisfy that for all $\bfx,\bfy \in \R^d$ and $t\geq 0$,
	\begin{equation*}
		\mathcal{W}_1 \big( \mathcal{L}( \bfX_t^{\e,\bfx}),\mathcal{L}( \bfX_t^{\e,\bfy}) \big) \leq \frac{\lambda_{\max}( \bfQ )}{\lambda_{\min}( \bfQ )} |\bfx-\bfy| \rme^{-\theta t},
	\end{equation*}
	where the constant $\theta$ is given in \eqref{e:lambda} and the positive definite matrix $ \bfQ $ is given in Lemma \ref{lem:Q}.
\end{lemma}

Next, we give bounds for $\mathcal{W}_1 \big( \mu,\mu_{\e} \big)$ and $ \mathcal{W}_1 (\mcl{L}(\bfY^{\e,\bfx}_{t_{n}}),
\mcl{L}(\bfY^{\bfx}_{t_{n}}))$ in the following lemmas, and their proofs are postponed in Appendixes \ref{app_B2} and \ref{app_B3}.
\begin{lemma}\label{pro:mu-mue}
	For $\alpha\in(1,2]$ and $\varepsilon \in (0,1)$, under Assumption \ref{assump:M}, there exists some positive constant $C$ independent of $\e$ such that
	\begin{equation*}
		\mathcal{W}_1 (\mu,\mu_{\e})
		\leq C\e,
	\end{equation*}
	where $\mu$ and $\mu_{\e}$ are the ergodic measures of the processes $(\bfX_t)_{t\geqslant 0}$ and $(\bfX_{t}^{\e})_{t\geqslant 0}$ respectively.
\end{lemma}

\begin{lemma}\label{pro:2EM}
	For $\alpha\in(1,2]$ and $\varepsilon\in (0,1)$, under Assumptions \ref{assump:M} and \ref{assump-2}, for the two EM schemes $(\bfY^{\e, \bfx}_{t_{n}})_{n\in \mathbb{N}_0}$ and $(\bfY^{ \bfx}_{t_{n}})_{n\in \mathbb{N}_0}$ defined in \eqref{e:EM2} and \eqref{e:EM} respectively, there exists some positive constant $C$ independent of $\e$ such that for all $ \bfx\in \R^d$ and $n\in \mathbb{N}_0$,
	\begin{equation*}
		\mathcal{W}_1 \big( \mcl{L}(\bfY^{\e, \bfx}_{t_{n}}),
		\mcl{L}(\bfY^{ \bfx}_{t_{n}}) \big) \leq {C \e^{1/2}. }
	\end{equation*}
\end{lemma}

The following Lemma is similar to that of \cite[Lemma B.1]{Pages2020Unajusted} which is crucial to obtain the convergence rate, and we shall give the proof in Appendix \ref{app_B4}.
\begin{lemma}\label{lem:series}
	For $\alpha\in(1,2]$, given that the  step size $(\eta_n)_{n\in \mathbb{N}}$ satisfy Assumption \ref{assump-2},  there exists some positive constant $C$ such that
	\begin{equation*}
		\sum_{i=1}^{n}  \rme^{-\theta(t_n-t_i)} \eta_i^{1+ {1}/{\alpha}}
		\leq C\eta_n^{{1}/{\alpha}},
	\end{equation*}
	where the constant $\theta$ is in \eqref{e:lambda}.
\end{lemma}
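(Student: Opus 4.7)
The plan is to reduce the inequality to a Riemann-type geometric sum by an exponential rescaling. Set $\phi_i := \eta_i^{1/\alpha}\rme^{\theta t_i}$, so that the left-hand side equals $\rme^{-\theta t_n}\sum_{i=1}^n \eta_i\phi_i$ while $\eta_n^{1/\alpha} = \rme^{-\theta t_n}\phi_n$. The claim is therefore equivalent to $\sum_{i=1}^n \eta_i\phi_i \leq C\phi_n$. I will establish this by showing that $\phi_i$ grows (eventually) at a strictly positive exponential rate, and then bounding the resulting geometric sum by a universal constant.

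To control $\phi_{i+1}/\phi_i$, I first estimate $\log(\eta_k/\eta_{k+1})$. By the definition of $\omega$ in \eqref{e:Q}, for any sufficiently small $\epsilon > 0$ there exists $N_0$ such that $\eta_k^\beta - \eta_{k+1}^\beta \leq (\omega+\epsilon)\eta_{k+1}^{1+\beta}$ for every $k\geq N_0$; in particular this already implies $\eta_k/\eta_{k+1}\to 1$ (since $\eta_k^\beta - \eta_{k+1}^\beta = o(\eta_{k+1}^\beta)$). Concavity of $x\mapsto x^\beta$ yields $\eta_k^\beta - \eta_{k+1}^\beta \geq \beta\eta_k^{\beta-1}(\eta_k-\eta_{k+1})$, and combining the two estimates gives
\[
\frac{\eta_k-\eta_{k+1}}{\eta_{k+1}} \;\leq\; \frac{\omega+\epsilon}{\beta}\Bigl(\frac{\eta_k}{\eta_{k+1}}\Bigr)^{1-\beta}\eta_{k+1} \;\leq\; \frac{(1+\epsilon)(\omega+\epsilon)}{\beta}\,\eta_{k+1}
\]
for $k$ beyond some threshold $N\geq N_0$. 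Using $\log(1+x)\leq x$ I conclude $\log(\eta_k/\eta_{k+1})\leq \tfrac{(1+\epsilon)(\omega+\epsilon)}{\beta}\eta_{k+1}$.

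Consequently, for $i\geq N$,
\[
\log\frac{\phi_{i+1}}{\phi_i} \;=\; \theta\eta_{i+1} - \tfrac{1}{\alpha}\log\frac{\eta_i}{\eta_{i+1}} \;\geq\; \tilde\delta\,\eta_{i+1}, \qquad \tilde\delta := \theta - \frac{(1+\epsilon)(\omega+\epsilon)}{\alpha\beta},
\]
and Assumption~\ref{assump-2}(ii) ($\omega<\alpha\beta\theta$) guarantees $\tilde\delta>0$ for $\epsilon$ small enough. Iterating gives $\phi_i\leq \phi_n\rme^{-\tilde\delta(t_n-t_i)}$ for $N\leq i\leq n$, so
\[
\sum_{i=N}^n \eta_i\phi_i \;\leq\; \phi_n\sum_{i=N}^n \eta_i\rme^{-\tilde\delta(t_n-t_i)} \;\leq\; \frac{\rme^{\tilde\delta\eta_1}}{\tilde\delta}\,\phi_n,
\]
where the last inequality compares the decreasing sum against $\int_0^{t_n}\rme^{-\tilde\delta(t_n-s)}\,ds$ via the monotonicity of $(\eta_i)$. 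The finitely many initial terms $\sum_{i<N}\eta_i\phi_i$ are bounded by a fixed constant, and since $\phi_n\geq \phi_N\exp(\tilde\delta(t_n-t_N))\to\infty$, this remainder is also $O(\phi_n)$; adding the two contributions yields the desired uniform bound $\sum_{i=1}^n \eta_i\phi_i\leq C\phi_n$.

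The main technical obstacle will be the step in which the $\omega$-hypothesis, which is stated on $\beta$-th powers, is converted into the bound on $\log(\eta_k/\eta_{k+1})$: one needs both the concavity estimate above and the preliminary observation $\eta_k/\eta_{k+1}\to 1$, and one has to keep careful track of the $(1+\epsilon)$ factor so that the critical exponent $(1+\epsilon)(\omega+\epsilon)/(\alpha\beta)$ remains strictly below $\theta$. Once that quantitative rate is in hand, everything that follows is a direct Grönwall-type bound for sums with decreasing step sizes.
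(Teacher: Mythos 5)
Your proof is correct and follows essentially the same route as the paper's: both rescale by $\rme^{\theta t_i}\eta_i^{1/\alpha}$, use the $\omega$-condition together with $\omega<\alpha\beta\theta$ to show this quantity grows at a strictly positive exponential rate, and bound the resulting geometric sum by the integral $\int \rme^{(\theta-\omega/(\alpha\beta))s}\,\dif s$. The only difference is a sub-step: the paper estimates $(\eta_k/\eta_{k+1})^{1/\alpha}$ via the exact identity $\bigl(1+\tfrac{\eta_k^{\beta}-\eta_{k+1}^{\beta}}{\eta_{k+1}^{1+\beta}}\eta_{k+1}\bigr)^{1/(\alpha\beta)}$, whereas you go through concavity of $x\mapsto x^{\beta}$ and the limit $\eta_k/\eta_{k+1}\to 1$; your version is slightly longer but handles more carefully the fact that $\omega$ is only a limit (so the bound holds with $\omega+\epsilon$ for large $k$), a point the paper glosses over.
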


\subsection{Proof of Theorem \ref{thm:XY-dis}}
The semigroups on $\mathcal{C}_{lin}(\R^d)$ for processes $( \bfY^{ \bfx}_{t_n})_{n\in \mathbb{N}_0}$, and $( \bfY^{\e, \bfx}_{t_n})_{n\in \mathbb{N}_0}$ can be defined as below: for any $f\in \mathcal{C}_{lin}(\R^d)$,
\begin{equation}\label{e:Pe}
	\begin{aligned}
		\opQ_{t_i,t_j}f( \bfx) &= \E [f( \bfY_{t_j})| \bfY_{t_i}= \bfx], \quad 0\leq t_i \leq t_j,  \\
		\opQ^{\e}_{t_i,t_j}f( \bfx) &= \E [f( \bfY^{\e}_{t_j})| \bfY^{\e}_{t_i}= \bfx], \quad 0\leq t_i \leq t_j.
	\end{aligned}
\end{equation}
\begin{proof}[Proof of Theorem \ref{thm:XY-dis}]	
	It follows from \eqref{e:tri} that
	\begin{equation*}
		\mathcal{W}_1 \big( \mu,\mathcal{L}( \bfY_{t_n}^{ \bfx}) \big)
		\leq
		\mathcal{W}_1 (\mu,\mu_{\e}) +
		\mathcal{W}_1 \big( \mu_{\e},\mathcal{L}( \bfY_{t_n}^{\e, \bfx}) \big)+
		\mathcal{W}_1 \big( \mathcal{L}( \bfY_{t_n}^{\e, \bfx}),
		\mathcal{L}( \bfY_{t_n}^{ \bfx}) \big).
	\end{equation*}
	The bounds for $ \mathcal{W}_1 (\mu,\mu_{\e})$ and $ \mathcal{W}_1 \big(\mathcal{L}( \bfY_{t_n}^{\e, \bfx}),	\mathcal{L}( \bfY_{t_n}^{ \bfx})\big)$ are given in Lemmas \ref{pro:mu-mue} and \ref{pro:2EM} respectively. Thus, it suffices to give {the bound for}  $ \mathcal{W}_1 \big(\mu_{\e},\mathcal{L}( \bfY_{t_n}^{\e, \bfx}) \big)$.
	
	By the generalized Lindeberg principle in \cite{Pages2020Unajusted}, it follows from \eqref{e:W1} that
	\begin{equation*}
		\begin{aligned}
			\mathcal{W}_1 (\mathcal{L}( \bfX^{\e, \bfx}_{t_n}), \mathcal{L}( \bfY^{\e, \bfx}_{t_n}))
			&= \sup_{h\in {\rm Lip}(1)} \left\{  \opP^{\e}_{t_n} h( \bfx) -  \opQ^{\e}_{t_n} h( \bfx) \right\} \\
			&= \sup_{h\in {\rm Lip}(1)}
			\left\{
			\sum_{i=1}^{n}  \opQ^{\e}_{0,t_{i-1}} \circ (  \opP^{\e}_{t_{i-1},t_{i}} -  \opQ^{\e}_{t_{i-1},t_{i}}  ) \circ  \opP^{\e}_{t_{i},t_n} h( \bfx)
			\right\}.
		\end{aligned}
	\end{equation*}
	
	We firstly give the estimate for $( \opP^{\e}_{t_{i-1},t_{i}} -  \opQ^{\e}_{t_{i-1},t_{i}})\circ  \opP^{\e}_{t_i,t_n} h( \bfx)$ for $i=1,2,\cdots,n$. Actually, for $\alpha \in (1,2]$, we have that
	\begin{equation*}
		\begin{aligned}
			|( \opP^{\e}_{t_{i-1},t_{i}}-  \opQ^{\e}_{t_{i-1},t_{i}} )\circ  \opP^{\e}_{t_i,t_n} h( \bfx)|
			&= | \opP^{\e}_{t_i,t_n} h( \bfX_{t_{i-1},t_i}^{\e, \bfx} ) -  \opP^{\e}_{t_i,t_n} h( \bfY_{t_{i-1},t_i}^{\e, \bfx} )|  \\
			&\leq C  \rme^{-\theta(t_n-t_i)} \E| \bfX_{t_{i-1},t_i}^{\e, \bfx}-  \bfY_{t_{i-1},t_i}^{\e, \bfx}| \\
			& \leq C  \rme^{-\theta(t_n-t_i)} \eta_i^{1+ {1}/{\alpha}}(1+| \bfx|),
		\end{aligned}
	\end{equation*}
	where the first inequality is due to Lemma  \ref{pro:erg-Xe} and the last is due to Lemma \ref{lem:moment-est}. Combining this with Lemmas \ref{lem:moment-est} and \ref{lem:series}, we can obtain that for $\alpha\in (1,2]$,
	\begin{equation*}
		| \opP^{\e}_{t_n} h( \bfx) -  \opQ^{\e}_{t_n} h( \bfx)|
		\leq C \sum_{i=1}^{n}   \rme^{-\theta(t_n-t_i)} \eta_i^{1+ {1}/{\alpha}} \E\left[1+| \bfY^{\e, \bfx}_{t_{i-1}}| \right]
		\leq C(1+| \bfx|)\eta_n^{ {1}/{\alpha}},
	\end{equation*}
	which immediately implies that
	\begin{equation*}
		\mathcal{W}_1 \big(\mathcal{L}( \bfX^{\e, \bfx}_{t_n}), \mathcal{L}( \bfY^{\e, \bfx}_{t_n})\big)
		\leq  C(1+| \bfx|)\eta_n^{ {1}/{\alpha}}.
	\end{equation*}
	Moreover, combining this with Lemmas  \ref{lem:moment-est} and \ref{pro:erg-Xe}, we know
	\begin{equation*}
		\begin{aligned}
			\mathcal{W}_1 \big(\mu_{\e}, \mathcal{L}( \bfY^{\e, \bfx}_{t_n}) \big)
			&\leq
			\mathcal{W}_1 \big(\mu_{\e},
			\mathcal{L}( \bfX^{\e, \bfx}_{t_n}) \big)
			+
			\mathcal{W}_1 \big(\mathcal{L}( \bfX^{\e, \bfx}_{t_n}), \mathcal{L}( \bfY^{\e, \bfx}_{t_n}) \big) \\
			&\leq C(1+| \bfx|) \rme^{-\theta t_n}+ C(1+| \bfx|)\eta_n^{ {1}/{\alpha}}
			\leq C(1+| \bfx|)\eta_n^{ {1}/{\alpha}}.
		\end{aligned}
	\end{equation*}
	This, together with Lemmas \ref{pro:mu-mue} and \ref{pro:2EM}, implies that for $\alpha\in (1,2]$,
	\begin{equation*}
		\begin{aligned}
			\mathcal{W}_1 \big(\mu,\mathcal{L}( \bfY_{t_n}^{ \bfx})\big)
			&\leq C\e+
			C(1+| \bfx|)\eta_n^{ {1}/{\alpha}}
			+C\e^{ {1}/{2}}
			\leq
			C(1+| \bfx|)\eta_n^{ {1}/{\alpha}},
		\end{aligned}
	\end{equation*}
	where the last inequality holds by choosing $\e\leqslant\eta_n^{{2}/{\alpha}}$. The proof is complete.
\end{proof}

\section{ Proofs of Theorems \ref{thm:SDECLT} and \ref{thm:SDEMDP}} \label{sec:SDECLTMDP}
We split this section into two subsections, one for the proof of Theorem \ref{thm:SDECLT} and the other for that of Theorem \ref{thm:SDEMDP}.

To investigate the CLT and MDP for the process $(\bfX_t)_{t\geq 0}$, we consider the following first kind of Stein's equation: for any $h\in \mathcal{B}_b(\R^d,\R)$,
\begin{equation}\label{e:steinbb}
	\mathcal{A}^{\alpha} f_h(\bfx)
	= h(\bfx)-\mu(h),
\end{equation}
where $\mathcal{A}^{\alpha}$ and $\mu$ are the infinitesimal generator and the ergodic measure for the process $(\bfX_t)_{t\geq 0}$.
Then, we can get the following regularities for the solution $f_h$ of Eq. \eqref{e:steinbb}.
\begin{lemma}\label{pro:steinbb}
	Let $h\in \mathcal{B}_b(\R^d,\R)$ and $f_h$ be the solution of Eq. \eqref{e:steinbb}. Then it holds
	\begin{equation*}
|f_h(\bfx)| \leq C(1+|\bfx|^{2 \kappa })
\quad \text{ and } \quad
|\nabla f_h(\bfx)| \leq C(1+|\bfx|^{2 \kappa }),
	\end{equation*}
	where the constant $ \kappa $ satisfies  $16 \kappa <1$.
\end{lemma}

We will prove this lemma in Appendix \ref{app_C}. With the help of this lemma, we can show the CLT and MDP for the process $(\bfX_t)_{t\geq 0}$ with $\alpha$ stable noise $(\bfZ_t)_{t\geq 0}$. Recall that the CLT and MDP for the process $(\bfX_t)_{t\geq 0}$ with $\alpha=2$ have been shown in \cite[Theorems 2 and 3]{Jin2024An}.

\subsection{Proof of Theorem
	\ref{thm:SDECLT}} We  prove Theorem \ref{thm:SDECLT} by It\^{o}'s formula and martingale CLT. The verification of the conditions in martingale CLT is highly non-trivial.
\begin{proof}[Proof of Theorem \ref{thm:SDECLT}]
	
	We shall split the proof according to the different $\alpha$ stable noises: one is the  $d$-dimensional rotationally symmetric $\alpha$ stable process, and the other is the cylindrical $\alpha$-stable noise.
	
	{\bf Case 1. For the rotationally symmetric stable noise.} For $(\bfX_t)_{t\geq 0} $ in SDE \eqref{e:SDE} with $\bfX_0=\bfx$, by using It\^{o}'s formula to $f_h$ which is the solution of  Stein's equation \eqref{e:steinbb}, we have
	\begin{equation*}
		\begin{aligned}
			f_h(\bfX_t^{\bfx})-f_h(\bfx)
			&= \int_0^t \mathcal{A}^{\alpha} f_h(\bfX_s^{\bfx})\dif s
			+\int_0^t \int_{\R_0^d} \left[f_h(\bfX_{s-}^{\bfx}+ \bmsigma \bfz) -f_h(\bfX_{s-}^{\bfx}) \right] \widetilde{N}(\dif s, \dif \bfz) \nonumber \\
			&= \int_0^t [h(\bfX_s^{\bfx})-\mu(h)] \dif s
			+\int_0^t \int_{\R_0^d} \left[f_h(\bfX_{s-}^{\bfx}+ \bmsigma \bfz)
			-f_h(\bfX_{s-}^\bfx) \right] \widetilde{N}(\dif s, \dif \bfz),
		\end{aligned}
	\end{equation*}
	where the infinitesimal operator $\mathcal{A}^{\alpha}$ is defined in \eqref{e:Asym}.
	It implies that
	\begin{equation*}
		\begin{aligned}
			&\mathrel{\phantom{=}}
			\sqrt{t} \left[\frac{1}{t}\int_0^t \delta_{\bfX_s^{\bfx}}(h)\dif s-\mu(h) \right]
			\ = \ \frac{1}{\sqrt{t}}\int_0^t [ h(\bfX_s^{\bfx}) -\mu(h)]\dif s  \\
			&= \frac{1}{\sqrt{t}} [ f_h(\bfX_t^\bfx)-f_h(\bfx)]-\frac{1}{\sqrt{t}} \int_0^t \int_{\R_0^d} \left[f_h(\bfX_{s-}^\bfx+\bmsigma \bfz)-f_h(\bfX_{s-}^\bfx) \right] \widetilde{N}(\dif s, \dif \bfz).
		\end{aligned}
	\end{equation*}
	It follows from $\E f_h(\bfX_t^\bfx)\leq C(1+|\bfx|^{2 \kappa })$ in \eqref{e:metlV} and Lemma  \ref{pro:steinbb}  that
	\begin{equation*}
		\E \left| \frac{1}{\sqrt{t}}
		\left[f_h(\bfX_t^{\bfx})-f_h(\bfx)
		\right] \right|
		\to 0 ,  \quad \text{ as}\ t \to \infty.
	\end{equation*}		
	It follows from Lemma  \ref{pro:steinbb} that for some $C>0$
	\begin{equation*}
		\begin{aligned}
			&\mathrel{\phantom{=}}
			\int_{\R_0^d} \left[f_h(\bfx+\bmsigma \bfz)
			-f_h(\bfx) \right]^2 \nu(\dif \bfz) \\
			&= \int_{0<|\bfz|\leq 1} \left| \int_0^1 \langle \nabla f_h(\bfx+r\bmsigma \bfz),\bmsigma \bfz \rangle \dif r  \right|^2 \nu(\dif \bfz)
			+\int_{|\bfz|>1} \left[f_h(\bfx+ \bmsigma \bfz)-f_h(\bfx) \right]^2 \nu(\dif \bfz) \\
			&\leqslant C \int_{0<|\bfz|\leq 1} (1+|\bfx|^{2 \kappa }+|\bfz|^{2 \kappa })^2 |\bfz|^2 \nu(\dif \bfz)
			+\int_{|\bfz|>1} (1+|\bfx|^{2 \kappa }+|\bfz|^{2 \kappa })^2 \nu(\dif \bfz) ,
		\end{aligned}
	\end{equation*}
	which implies that
	\begin{equation}\label{e:Vfest}
		\begin{aligned}
			&\mathrel{\phantom{=}}  \mathcal{V}(f_h)
			:= \int_{\R^d} \int_{\R_0^d} \left[f_h(\bfx+\bmsigma \bfz)
			-f_h(\bfx) \right]^2 \nu(\dif \bfz) \mu(\dif \bfx)  \\
			&\leq C\int_{\R^d} \left[\int_{0<|\bfz|\leq 1} (1+|\bfx|^{2 \kappa }+|\bfz|^{2 \kappa })^2 |\bfz|^2 \nu(\dif \bfz)
			+\int_{|\bfz|>1} (1+|\bfx|^{2 \kappa }+|\bfz|^{2 \kappa })^2 \nu(\dif \bfz) \right] \mu(\dif \bfx) \\
			&\leq  C\mu(|\bfx|^{4 \kappa })
			\ < \ \infty,
		\end{aligned}
	\end{equation}
	where the last two inequalities hold from the facts that $4 \kappa <1$ and $\int_{\R_0^d}(1\wedge |\bfz|^2) \nu(\dif \bfz)<\infty$.		
	
	To make notations simple, we denote
	\begin{equation*}
		\begin{aligned}
			U_i &= \int_{i-1}^i \int_{\R_0^d} \left[f_h(\bfX_{s-}^\bfx+\bmsigma \bfz)-f_h(\bfX_{s-}^\bfx) \right] \widetilde{N}(\dif s, \dif \bfz) \quad \text{for}\ i=1,2,\cdots,\lfloor t \rfloor, \\
			U_{\lfloor t \rfloor+1} &=  \int_{\lfloor t \rfloor}^t \int_{\R_0^d} \left[f_h(\bfX_{s-}^\bfx+\bmsigma \bfz)-f_h(\bfX_{s-}^\bfx) \right] \widetilde{N}(\dif s, \dif \bfz).
		\end{aligned}
	\end{equation*}
	Then, we have
	\begin{equation*}
		\frac{1}{\sqrt{t}} \int_0^t \int_{\R_0^d} \left[f_h(\bfX_{s-}^{\bfx}+\bmsigma \bfz)-f_h(\bfX_{s-}^{\bfx}) \right] \widetilde{N}(\dif s, \dif \bfz) = \frac{1}{\sqrt{t}} \sum_{i=1}^{\lfloor t \rfloor + 1} U_i.
	\end{equation*}
	We know that $U_i$'s are martingale differences and $\E U_i^2 <\infty$, which can be obtained similarly with inequality \eqref{e:Vfest} for all $i=1,2,\cdots,\lfloor t \rfloor+1$. We claim that
	\begin{gather}
		\lim_{t \to \infty} \E\left( \max_{1\leq i \leq \lfloor t \rfloor +1 } \frac{1}{\mathcal{V}(f_h) t} |U_i|^2 \right) \ = \ 0, \label{e:CLT1} \\
		\lim_{t \to \infty} \E \left| \sum_{i=1}^{\lfloor t \rfloor +1 } \frac{1}{\mathcal{V}(f_h) t } |U_i|^2 -1 \right|^2 \ = \ 0. \label{e:CLT2}
	\end{gather}		
	We know \eqref{e:CLT1} and \eqref{e:CLT2} imply
	\begin{equation*}
		\E\left( \max_{1\leq i \leq \lfloor t \rfloor +1 } \frac{1}{ \sqrt{\mathcal{V}(f_h)t}}|U_i|\right) \to 0
		\quad \text{ and } \quad
		\sum_{i=1}^{\lfloor t \rfloor +1 } \frac{1}{\mathcal{V}(f_h) t } |U_i|^2 \stackrel{p}{\longrightarrow} 1
	\end{equation*}
	as $t$ tends to infinity.  Applying the martingale CLT in \cite[Theorem 2]{sethuraman2002martingale}, due to \cite{mcleish1974dependent}, one has
	\begin{equation*}
		\frac{1}{\sqrt{\mathcal{V}(f_h) t}}\sum_{i=1}^{\lfloor t \rfloor+1} U_i
		\Rightarrow \mathcal{N} (0,1) \quad \text{as } \ t \to \infty.
	\end{equation*}		
	Then, we know
	\begin{equation*}
		\sqrt{t} \left[ \frac{1}{t}\int_0^t \delta_{\bfX_s^{\bfx}}(h)\dif s-\mu(h) \right]
		\ \Rightarrow \ \mathcal{N}(0,\mathcal{V}(f_h))
		\quad \text{ as  }\ t \to \infty.
	\end{equation*}
	
	It remains to show \eqref{e:CLT1} and \eqref{e:CLT2}. For \eqref{e:CLT1}, one has
	\begin{equation*}
		\begin{aligned}
			\E\left( \max_{1\leq i \leq \lfloor t \rfloor +1} |U_i|^2 \right)
			&= \E\left( \max_{1\leq i \leq \lfloor t \rfloor +1} (|U_i|^2  \I_{ \{ |U_i|^2\leq \sqrt{t} \} } + |U_i|^2  \I_{ \{|U_i|^2>\sqrt{t}  \}})  \right)  \nonumber \\
			&\leq \E\left( \max_{1\leq i \leq \lfloor t \rfloor+1} |U_i|^2  \I_{ \{ |U_i|^2\leq \sqrt{t} \} } \right)
			+ \E\left( \max_{1\leq i \leq \lfloor t \rfloor+1} |U_i|^2  \I_{ \{|U_i|^2> \sqrt{t}  \}}  \right) \nonumber \\
			&\leq \sqrt{t} +(\lfloor t \rfloor+1) \max_{1\leq i \leq \lfloor t \rfloor+1}
			\E\left[|U_i|^2  \I_{\{|U_i|^2>\sqrt{t}\}}
			\right] .
		\end{aligned}
	\end{equation*}
	Thus, we obtain that as $t$ tends to $\infty$
	\begin{equation*}
		\E \left( \max_{1\leq i \leq \lfloor t \rfloor+1 } \frac{1}{ \mathcal{V}(f_h) t} | U_i |^2 \right)
		\leq \frac{1}{ \mathcal{V}(f_h) t} \left(\sqrt{t}+(\lfloor t \rfloor+1) \max_{1\leq i \leq \lfloor t \rfloor +1} \E \left[|U_i|^2  \I_{\{|U_i|^2>\sqrt{t} \}}\right] \right)
		\to  0 .
	\end{equation*}
	
	It is easy to verify \eqref{e:CLT2} by proving the following equation,
	\begin{equation}\label{eq:32}
		\lim_{t \to \infty} \E \left| \sum_{i=1}^{ \lfloor t \rfloor +1 } \frac{1}{\mathcal{V}(f_h)t}|U_i|^2 -\frac{ \lfloor t \rfloor +1 }{t} \right|^2  \  =  \ 0.
	\end{equation}
	Observe that
	\begin{equation*}
		\begin{aligned}
			&\mathrel{\phantom{=}}
			\E \left| \sum_{i=1}^{ \lfloor t \rfloor +1 } \frac{1}{\mathcal{V}(f_h) t } |U_i|^2 -\frac{ \lfloor t \rfloor +1 }{t} \right|^2
			\  =  \ \E \left| \frac{1}{t} \sum_{i=1}^{ \lfloor t \rfloor +1 } \left( \frac{1}{\mathcal{V}(f_h)} |U_i|^2 - 1 \right) \right|^2 \\
			&= \frac{1}{t^2} \sum_{i=1}^{ \lfloor t \rfloor +1 }\E\left(\frac{1}{ \mathcal{V}(f_h)}|U_i|^2 - 1 \right)^2+\frac{2}{t^2}\sum_{i<j}\E\left[ \left( \frac{1}{ \mathcal{V}(f_h) } |U_i|^2 -1\right) \left(\frac{1}{ \mathcal{V}(f_h)}|U_j|^2-1\right) \right] \\
			& =: {\rm I} + {\rm II}.
		\end{aligned}
	\end{equation*}		
	By using Kunita's inequality (see, \cite[Theorem 4.4.23]{applebaum2009levy}), with similar calculations for \eqref{e:Vfest}, there exists some positive constant $C$ such that
	\begin{equation*}
		\begin{aligned}
			\E|U_i|^4
			&\leq \E \left| \int_{i-1}^i \int_{\R_0^d} \left[f_h(\bfX_{s-}^\bfx+\bmsigma \bfz)-f_h(\bfX_{s-}^\bfx) \right]^2 \nu(\dif \bfz) \dif s \right|^2  \\
			&\mathrel{\phantom{=}}
			+ \E\int_{i-1}^i \int_{\R_0^d} \left[f_h(\bfX_{s-}^\bfx+\bmsigma \bfz)-f_h(\bfX_{s-}^\bfx) \right]^4 \nu(\dif \bfz) \dif s  \\
			&\leq C\E(1+|\bfX_{i-1}^{\bfx}|^{8 \kappa })
			\ \leq \ C(1+|\bfx|^{8 \kappa }).
		\end{aligned}
	\end{equation*}
	Combining this with Lemma  \ref{pro:steinbb} and the fact $8 \kappa <1$, we know
	\begin{equation*}
		\begin{aligned}
			\E\left( \frac{1}{ \mathcal{V}(f_h)}|U_i|^2-1\right)^2
			& = \E \left[ \frac{1}{\mathcal{V}^2(f_h)}|U_i|^4 - \frac{2}{\mathcal{V}(f_h)}|U_i|^2+1 \right]
		\leq C(1+\E |\bfX_i^\bfx|^{8 \kappa }) \\
			& \leq \  C(1+|\bfx|^{8 \kappa }),
		\end{aligned}
	\end{equation*}
	which implies that
	\begin{equation*}
		{\rm I}
		= \frac{1}{t^2} \sum_{i=1}^{ \lfloor t \rfloor +1}\E\left( \frac{1}{\mathcal{V}(f_h)}|U_i|^2-1 \right)^2
		\to 0, \quad \text{as }\ t \to \infty.
	\end{equation*}		
	As for ${\rm II}$, for each $i$, it holds that for some constant $C>0$
	\begin{equation*}
		\begin{aligned}
			&\mathrel{\phantom{=}}
			\sum_{j=i+1}^{\lfloor t \rfloor +1} \E \left[ \left ( \frac{1}{ \mathcal{V}(f_h) } |U_j|^2 - 1 \right) \bigg|\mathcal{F}_i  \right] = \frac{1}{\mathcal{V}(f_h)} \sum_{j=i+1}^{\lfloor t \rfloor +1}\E\big[ (|U_j|^2 -\mathcal{V}(f_h)  ) |\mathcal{F}_i  \big] \\
			&= \frac{1}{\mathcal{V}(f_h)} \int_{i}^{\lfloor t \rfloor+1}
			\left( \E\int_{\R_0^d} \Big| f_h\big( \bfX_{s}^{\bfX_{i}^\bfx}+{\bf \sigma z} \big)-f_h\big( \bfX_{s}^{\bfX_{i}^\bfx} \big) \Big|^2 \nu(\dif \bfz)- \mathcal{V}(f_h) \right)
			\dif s \\
			&\leqslant \int_{0}^{\lfloor t \rfloor+1} (1+|\bfX_i^\bfx|^{4 \kappa })\rme^{-cs} \dif s
\leqslant C (1+|\bfX_i^\bfx|^{4 \kappa }),
		\end{aligned}
	\end{equation*}
	where the last second inequality is by Lemma  \ref{lem:eeSDEs} and the fact $8 \kappa <1$. Hence, we can obtain
	\begin{equation*}
		\begin{aligned}
			{\rm II}
			&= \frac{2}{t^2}\sum_{i=1}^{\lfloor t \rfloor } \sum_{j=i+1}^{\lfloor t \rfloor +1} \E \left\{ \left(\frac{1}{\mathcal{V}(f_h)}|U_i|^2-1\right) \E \left[ \left ( \frac{1}{ \mathcal{V}(f_h) } |U_j|^2 - 1 \right) |\mathcal{F}_i  \right]  \right\} \\
			&\leq \frac{C}{t^2}\sum_{i=1}^{\lfloor t \rfloor } \E \left\{ \Big| \frac{1}{\mathcal{V}(f_h)}|U_i|^2-1 \Big| \big(1+|\bfX_i^\bfx|^{4 \kappa }\big)  \right\} \\
			&\leq \frac{C}{t^2}\sum_{i=1}^{\lfloor t \rfloor } \left[ \E \Big|\frac{1}{\mathcal{V}(f_h)}|U_i|^2-1 \Big|^2 \right]^{{1}/{2}} \big[ 1+\E |\bfX_i^\bfx|^{8 \kappa } \big]^{{1}/{2}}
			\to 0, \quad \text{ as }\ t \to \infty.
		\end{aligned}
	\end{equation*}
	Combining the estimates for ${\rm I}$ and ${\rm II}$, \eqref{eq:32} holds.
	
	{\bf Case 2. For the  cylindrical  stable noise.} For $(\bfX_t)_{t\geq 0} $ in SDE \eqref{e:SDE} with $\bfX_0=\bfx$, by using It\^{o}'s formula for  $f_h$ which is the solution of Stein's equation \eqref{e:steinbb}, we have
	\begin{equation*}
		\begin{aligned}
			&\mathrel{\phantom{=}}
			f_h(\bfX_t^{\bfx})-f_h(\bfx)
			= \int_0^t \mathcal{A}^{\alpha} f_h(\bfX_s^{\bfx})\dif s
			+\sum_{i=1}^d \int_0^t \int_{\R_0} \left[f_h(\bfX_{s-}^{\bfx}+\bmsigma_i z_i) -f_h(\bfX_{s-}^{\bfx}) \right] \widetilde{N}(\dif s, \dif z_i) \nonumber \\
			&= \int_0^t [h(\bfX_s^{\bfx})-\mu(h)] \dif s
			+\sum_{i=1}^d \int_0^t \int_{\R_0} \left[f_h(\bfX_{s-}^{\bfx}+ \bmsigma_i z_i)-f_h(\bfX_{s-}^\bfx) \right] \widetilde{N}(\dif s, \dif z_i),
		\end{aligned}
	\end{equation*}
	where the infinitesimal operator $\mathcal{A}^{\alpha}$ here is defined in \eqref{e:Aiid}.
	It follows from the Stein's equation \eqref{e:steinbb} that
	\begin{equation*}
		\begin{aligned}
			&\mathrel{\phantom{=}}
			\sqrt{t} \left[\frac{1}{t}\int_0^t \delta_{\bfX_s^{\bfx}}(h)\dif s-\mu(h) \right] \\
			&= \frac{1}{\sqrt{t}}[f_h(\bfX_t^{\bfx})-f_h(\bfx)] +\sum_{i=1}^d \frac{1}{\sqrt{t}}\int_0^t \int_{\R_0} \left[f_h(\bfX_{s-}^{\bfx}+ \bmsigma_i z_i)-f_h(\bfX_{s-}^\bfx) \right] \widetilde{N}(\dif s, \dif z_i).
		\end{aligned}
	\end{equation*}
	With similar calculations for the {\bf Case 1}, we can get the following expression for $\mathcal{V}(f_h)$:
	\begin{equation*}
		\mathcal{V}(f_h)
		= \sum_{i=1}^d \int_{\R^d} \int_{\R_0} \left[f_h(\bfx+ \bmsigma_i z_i)-f_h(\bfx) \right]^2 \nu_1(\dif z_i) \mu(\dif \bfx).
	\end{equation*}
	The proof is complete.
\end{proof}

\subsection{Proof of Theorem \ref{thm:SDEMDP}}  We shall prove the MDP by the criterion in \cite[Theorem 2.6]{wu2001large}.
\begin{proof}[Proof of Theorem \ref{thm:SDEMDP}.]
	According to the criterion for MDP in \cite[Theorem 2.6]{wu2001large}, we need to show that the semigroup $\opP_t$ associated to the process $(\bfX_t)_{t\geqslant 0}$ is strong Feller, topological irreducible and satisfies a Lyapunov condition which is immediately implied by the second relation in \eqref{e:LyaCon}. The strong Feller property and topological irreducibility hold form the Harnack inequality and log-Harnack inequality from \cite[Theorem 2.1]{wang2014harnack} for the $d$ dimensional rotationally symmetric $\alpha$ stable noise and \cite[Theorems 1.1 and 1.2]{wang2015harnack} for the cylindrical stable noise respectively.
	
	For $h\in \mathcal{B}_b(\R^d, \R)$, it follows from Wu \cite[Theorem 2.1]{wu1995moderate} that
	$$\PP\left( \frac{1}{a_t\sqrt{t}}\int_0^t [ h(\bfX_s^\bfx) - \mu(h)  ]  \dif s \in  \,\, \cdot \,\, \right)$$
	satisfies the large deviation principle with speed $a_t^{-2}$ and rate function $I_h(z)=\frac{z^2}{2\mathcal{H}(h)}$ with
	$$\mathcal{H}(h) \ = \ 2\int_0^{\infty} \langle \opP_t h, h-\mu(h) \rangle_{\mu} \dif t,$$
	that is,
	\begin{equation*}
		\begin{aligned}
			-\inf_{z\in \mathring{A} }I_h(z)
			&\leq \liminf_{t\to\infty}\frac{1}{a_t^2}\log \mathbb{P}\left(\frac{\sqrt{t}}{a_t}  \left[\mcl E^{\bfx}_t(h)-\mu(h)\right] \in A \right)  \\
			&\leq \limsup_{t\to\infty}\frac{1}{a_t^2}\log \mathbb{P} \left( \frac{\sqrt{t}}{a_t}  \left[\mcl E^{\bfx}_t(h)-\mu(h)\right] \in A \right)
			\ \leq  \ -\inf_{z \in \bar{A}} I_h (z),
		\end{aligned}
	\end{equation*}
	where $\bar{A}$ and $\mathring{A}$ are the closure and  interior of set $A$ respectively.
	
	We claim that
	\begin{equation}\label{e:Vh}
		\mathcal{H}(h)  = \mathcal{V}(f_h),
	\end{equation}
	where $f_h$ is the solution to Stein's equation \eqref{e:steinbb}. Then the desired result holds.
	
	Now, we show that the claim \eqref{e:Vh} holds. With some calculations, one has
	\begin{equation*}
		\begin{aligned}
			\frac{1}{t}\E^{\mu} \left( \int_0^t [h(\bfX_s^\bfx) - \mu(h)] \dif s \right)^2
			&= \frac{1}{t}\E^{\mu} \left( \int_0^t \int_0^t [h(\bfX_s^\bfx)-\mu(h)][h(\bfX_u^\bfx)-\mu(h)] \dif u \dif s  \right) \\
			&= \frac{2}{t}\E^{\mu} \left( \int_0^t \int_0^u [h(\bfX_s^\bfx)-\mu(h)][h(\bfX_u^\bfx)-\mu(h)] \dif s \dif u \right) \\
			&= \frac{2}{t} \int_0^t \int_0^u \E^{\mu} \{ [h(\bfX_s^\bfx)-\mu(h)] \E \{ [h(\bfX_u^\bfx)-\mu(h)]| \bfX_s^\bfx \} \} \dif s \dif u \\
			&= \frac{2}{t} \int_0^t \int_0^u \E^{\mu} \{ h(\bfX_s^\bfx) \E \{ [h(\bfX_u^\bfx)-\mu(h)]| \bfX_s^\bfx \} \} \dif s \dif u,
		\end{aligned}
	\end{equation*}
	where the third equality holds from conditional probability and the last equality holds from
	\begin{equation*}
		\int_0^t \int_0^u \E^{\mu} \{ \mu(h) \E \{ [h(\bfX_u^\bfx)-\mu(h)]| \bfX_s^\bfx \} \} \dif s \dif u
		= \mu(h) \int_0^t \int_0^u \E^{\mu} \{  h(\bfX_u^\bfx)-\mu(h) \} \dif s \dif u
		=0.
	\end{equation*}
	Furthermore, for all $0\leq s \leq u<\infty$, one has
	\begin{equation*}
		\begin{aligned}
			\E^{\mu} \{ h(\bfX_s^\bfx) \E \{ [h(\bfX_u^\bfx)-\mu(h)]| \bfX_s^\bfx \} \}
			&= \E^{\mu} \{ h(\bfX_s^\bfx) [ \opP_{u-s}h(\bfX_s^\bfx) - \mu(h)] \} \\
			& = \int_{\R^d} h(\bfy)[ \opP_{u-s}h(\bfy)-\mu(h) ] \mu(\dif \bfy),
		\end{aligned}
	\end{equation*}
	which implies that
	\begin{equation*}
		\begin{aligned}
			&\mathrel{\phantom{=}}
			\frac{2}{t} \int_0^t \int_0^u \E^{\mu} \{ h(\bfX_s^\bfx) \E \{ [h(\bfX_u^\bfx)-\mu(h)]| \bfX_s^{\bfx} \} \} \dif s \dif u \\
			&= \frac{2}{t} \int_0^t\int_0^u \int_{\R^d} h(\bfy)[\opP_{u-s}h(\bfy)-\mu(h) ] \mu(\dif \bfy) \dif s \dif u \\
			&= \int_{\R^d} h(\bfy) \frac{2}{t} \int_0^t\int_0^u [ \opP_{u-s}h(\bfy)-\mu(h) ]  \dif s \dif u \mu(\dif \bfy) \\
			&= \int_{\R^d} h(\bfy) \frac{2}{t} \int_0^t\int_0^u[ \opP_{\tl{s}}h(\bfy)-\mu(h)]\dif \tl{s} \dif u \mu(\dif \bfy).
		\end{aligned}
	\end{equation*}
	Using the L'Hospital's rule, one has
	\begin{equation*}
		\lim_{t\to\infty}\int_{\R^d} h(\bfy) \frac{2}{t} \int_0^t\int_0^u [ \opP_{\tl{s}}h(\bfy)-\mu(h) ]  \dif \tl{s} \dif u \mu(\dif \bfy)
		= 2 \int_{\R^d} h(\bfy) \int_0^{\infty} [\opP_{\tl{s}} h(\bfy) - \mu(h)] \dif \tl{s} \mu(\dif \bfy).
	\end{equation*}
	From the expression of $\mathcal{H}(h)$, we know
	\begin{equation*}
		\mathcal{H}(h)
		= 2\int_0^{\infty} \langle \opP_t h, h-\mu(h) \rangle_{\mu} \dif t
		\ = \ 2\int_0^{\infty} \langle \opP_t h - \mu(h), h \rangle_{\mu} \dif t,
	\end{equation*}
	so that,
	\begin{equation*}
		\frac{1}{t}\E^{\mu} \left( \int_0^t [h(\bfX_s^\bfx) - \mu(h)] \dif s \right)^2
		\to \mathcal{V}(f_h) ,
		\quad \text{ as } \ t \to \infty.
	\end{equation*}
	
	{\bf Case 1. For the rotationally symmetric stable noise.} Using It\^{o}'s formula and Stein's equation \eqref{e:steinbb}, we have
	\begin{equation*}
		\begin{aligned}
			f_h(\bfX_t^\bfx)-f_h(\bfx)
			&=\int_0^t \mathcal{A}^{\alpha} f_h(\bfX_s^\bfx)\dif s
			+\int_0^t \int_{\R_0^d} \left[f_h(\bfX_{s-}^\bfx+\bmsigma \bfz)-f_h(\bfX_{s-}^\bfx) \right] \widetilde{N}(\dif s, \dif \bfz)  \nonumber \\
		&=\int_0^t [h(\bfX_s^\bfx)-\mu(h)] \dif s
			+\int_0^t \int_{\R_0^d} \left[f_h(\bfX_{s-}^\bfx+\bmsigma \bfz)-f_h(\bfX_{s-}^\bfx) \right] \widetilde{N}(\dif s, \dif \bfz).
		\end{aligned}
	\end{equation*}
	Combining this with ergodic theorem and the fact that $\E \big| [ f_h(\bfX_t^{\bfx})-f_h(\bfx)] / {\sqrt{t}} \big| \to 0$ as $t \to \infty$, one yields that
	\begin{equation*}
		\begin{aligned}
			&\mathrel{\phantom{=}}
			\frac{1}{t}\E^{\mu} \left( \int_0^t [h(\bfX_s^\bfx) - \mu(h)] \dif s \right)^2 \\
			&= \E^{\mu} \left( \frac{1}{\sqrt{t}} \left[f_h(\bfX_t^\bfx)-f_h(\bfx)\right] - \int_0^t \int_{\R_0^d} \left[f_h(\bfX_{s-}^\bfx+\bmsigma \bfz)-f_h(\bfX_{s-}^\bfx) \right] \widetilde{N}(\dif s, \dif \bfz) \right)^2 \\
			& \to \mathcal{V}(f_h) , \quad\text{ as }\ t \to \infty.
		\end{aligned}
	\end{equation*}
	Thus, we know $\mathcal{H}(h) = \mathcal{V}(f_h)$.
	
	{\bf Case 2. For the  cylindrical  stable noise.} Using It\^{o}'s formula and Stein's equation \eqref{e:steinbb},
	\begin{equation*}
		\begin{aligned}			
			&\mathrel{\phantom{=}}
			f_h(\bfX_t^{\bfx})-f_h(\bfx)
			= \int_0^t \mathcal{A}^{\alpha} f_h(\bfX_s^{\bfx})\dif s
			+\sum_{i=1}^d \int_0^t \int_{\R_0} \left[f_h(\bfX_{s-}^{\bfx}+\bmsigma_i z_i) -f_h(\bfX_{s-}^{\bfx}) \right] \widetilde{N}(\dif s, \dif z_i) \nonumber \\
			&= \int_0^t [h(\bfX_s^{\bfx})-\mu(h)] \dif s
			+\sum_{i=1}^d \int_0^t \int_{\R_0} \left[f_h(\bfX_{s-}^{\bfx}+ \bmsigma_i z_i)-f_h(\bfX_{s-}^\bfx) \right] \widetilde{N}(\dif s, \dif z_i),
		\end{aligned}
	\end{equation*}
	with similar discussions for {\bf Case 1}, we also know that $\mathcal{H}(h)=\mathcal{V}(f_h)$. Combining above two cases, the claim \eqref{e:Vh} holds. The proof is complete.
\end{proof}

\section{Optimal error bound for the classical Ornstein-Uhlenbeck process}\label{seclow}

In this section, we shall show that the convergence rate obtained by our method is optimal for the classical Ornstein-Uhlenbeck process.

We consider the following Ornstein-Uhlenbeck process defined as
\begin{equation}\label{e:OU}
\dif X_t  = -X_t \dif t + \dif Z_t, \quad  X_0 = 0,
\end{equation}
where $(Z_t)_{t\geq 0}$ is a one-dimensional symmetric $\alpha$-stable process with $\alpha\in(1,2]$. It is known that the solution to \eqref{e:OU} is
\begin{equation*}
	X_t =  \rme^{-t}X_0 + \rme^{-t} \int_0^t \rme^s \dif Z_s  =  \int_0^t \rme^{s-t} \dif Z_s, \quad \forall \  t\geq 0 .
\end{equation*}
It follows from \cite[Proposition 3.4.1]{Taqqu1994Stable} that the characteristic function for $X_t^0$ is
\begin{equation*}
	\begin{aligned}
		\E\left[\exp(i u  X_t^0)\right]
		&=  \E\left[\exp\left(i\int_0^t  u  \rme^{s-t} \dif Z_s\right)\right]
		=  \exp\left(-\int_0^t | u  \rme^{s-t}|^{\alpha} \dif s \right) \\
		& =   \exp\left(-\frac{1}{\alpha}| u |^{\alpha} [1-\rme^{-\alpha t}] \right),
	\end{aligned}
\end{equation*}
for any $ u  \in \bbR$. It immediately leads to that $\E[\exp(i u  X_t^0)] \to \exp(-| u |^{\alpha} / \alpha)$ as $t \to \infty$.

The EM scheme of \eqref{e:OU} is given as follows: $Y_0=X_0 = 0$ and
\begin{equation}\label{e-OU1}
	Y_{t_{n+1}}
	= Y_{t_{n}}-\eta_{n+1}Y_{t_{n}} + \Delta Z_{\eta_{n+1}},  \quad n\in \mathbb{N}_0,
\end{equation}
where $t_n= \sum_{i=1}^n \eta_i$ with $t_0=0$ and $\Delta Z_{\eta_{n+1}}=Z_{t_{n+1}}-Z_{t_{n}}$.

\begin{proposition}\label{lower bound}
	Denote the invariant measure of Ornstein-Uhlenbeck process $(X_{t}^0)_{t\geq0}$ in \eqref{e:OU} by $\mu$ and let $(Y_{t_{n}}^0)_{n\in\mathbb{N}_{0}}$
	be defined in \eqref{e-OU1}. Under Assumption \ref{assump-2}, if $\beta=\alpha^{-1}$ and  $\omega=\alpha$ in \eqref{e:Q} for any $\alpha\in(1,2]$, we have
	\begin{equation*}
		0
		< \liminf_{n\rightarrow\infty} \frac{ \mathcal{W}_1(\mu,\mathcal{L}(Y_{t_{n}}^{0}))}
		{\eta_n^{1/\alpha}}
		\ < \ \limsup_{n\rightarrow\infty} \frac{ \mathcal{W}_1(\mu,\mathcal{L}(Y_{t_{n}}^{0}))}
		{\eta_n^{1/\alpha}}
		\ < \ \infty.
	\end{equation*}
\end{proposition}
\begin{proof}[Proof of Proposition \ref{lower bound}]
	By induction, it follows from \eqref{e-OU1} that
	\begin{equation*}
		\begin{aligned}
			Y_{t_{n}}
			&= (1-\eta_n) Y_{t_{n-1}} + \Delta Z_{\eta_n}
			=  (1-\eta_{n-1})(1-\eta_n) Y_{t_{n-2}} + (1-\eta_n) \Delta Z_{\eta_{n-1}} + \Delta Z_{\eta_n} \\
			&= \sum_{j=1}^{n} \Delta Z_{\eta_{j}} \prod_{k=j+1}^{n}(1-\eta_{k}) .
		\end{aligned}
	\end{equation*}
	Hence, the characteristic function for $Y_{t_n}^0$ is given by
	\begin{equation}\label{com1}
		\mathbb{E}\left[ \exp\left( i u  Y_{t_{n}}^0 \right) \right]
		= \exp\left(-| u |^{\alpha} \sum_{j=1}^n \eta_j \prod_{k=j+1}^n (1-\eta_k)^{\alpha} \right)
	\end{equation}
	for any $ u  \in \R$. Besides, the fact $1-x \le \rme^{-x}$ and $t_n = \sum_{k=1}^n \eta_k$ yield that
	\begin{equation} \label{com3}
		\sum_{j=1}^{n}\eta_{j}\prod_{k=j+1}^{n}
		(1-\eta_{k})^{\alpha}
		\leq   \sum_{j=1}^{n}\eta_{j} \rme^{-\alpha(t_n-t_j)}
		\leq   \frac 1{\alpha}(1-\rme^{-\alpha t_n})
		+ C \eta_n
	\end{equation}
	holds for some constant $C>0$ independent of $n$. The last inequality above is obtained by
	\begin{equation*}
		\begin{aligned}
			\mathrel{\phantom{=}} \sum_{j=1}^{n}\eta_{j} \rme^{-\alpha(t_n-t_j)}- \frac{1}{\alpha}+\frac{1}{\alpha} \rme^{-\alpha t_n}
			& =  \rme^{-\alpha t_n} \left[ \sum_{j=1}^{n}\eta_{j} \rme^{\alpha t_j}- \int_0^{t_n} \rme^{\alpha s} \dif s \right] \\
			&= \rme^{-\alpha t_n} \sum_{j=1}^{n} \int_{t_{j-1}}^{t_j} (\rme^{\alpha t_j}
			- \rme^{\alpha s}) \dif s \\
			& \leq   \rme^{-\alpha t_n} \sum_{j=1}^{n} \int_{t_{j-1}}^{t_j} \alpha \rme^{\alpha t_j} (t_j-s) \dif s  \\
			&= \frac{\alpha}{2} \sum_{j=1}^{n}  \rme^{-\alpha (t_n - t_j)} \eta_j^2
			\leq  C \eta_n,
		\end{aligned}
	\end{equation*}
	where the first inequality holds from the mean value theorem and the last inequality holds by Lemma \ref{lem:series}.

	Combining \eqref{com1} and \eqref{com3}, we have
	\begin{equation*}
		\mathbb{E}\left[\exp\left(i u  Y_{t_{n}}^0 \right)\right]
		\geq \exp\left( -| u |^{\alpha} \Big[ \frac{1}{\alpha}\big(1-\rme^{-\alpha t_n} \big) + C\eta_n \Big]\right).
	\end{equation*}
	Hence, there exists a constant $C>0$ independent of $n$ such that
	\begin{equation}\label{com2}
		\begin{aligned}
			&\mathrel{\phantom{=}} \int_{-1}^1 \left[ \mathbb{E}\left[\exp\left(i u  Y_{t_{n}}^0
			\right)\right]
			-\mathbb{E}\big[\exp\big(i u  \alpha^{-{1}/{\alpha}}Z_1\big)\big] \right] \dif  u  \\
			&\geq \int_{-1}^1 \left[ \exp\left( -| u |^{\alpha}\Big[ \frac{1}{\alpha}\big(1-\rme^{-\alpha t_n}\big) + C\eta_n \Big]\right)
			-\exp\Big(-\frac{1}{\alpha}| u |^{\alpha} \Big)   \right] \dif  u   \\
			&= \int_{-1}^1 \exp\Big(-\frac{1}{\alpha}| u |^{\alpha} \Big)
			\left[
			\exp\left(\frac{1}{\alpha}| u |^{\alpha}
			\rme^{-\alpha t_n} + C| u |^{\alpha} \eta_n\right)-1
			\right] \dif  u   \\
			&\geq \int_{-1}^1 \exp\Big(-\frac{1}{\alpha}| u |^{\alpha} \Big)
			\left[\frac{1}{\alpha}| u |^{\alpha}
			\rme^{-\alpha t_n} + C| u |^{\alpha} \eta_n\right] \dif  u
			= C\big( \rme^{-\alpha t_n} + \eta_n \big).
		\end{aligned}
	\end{equation}
	
	For constants $\omega$ and $\beta$ in \eqref{e:Q}, we claim that
	\begin{equation}\label{e:claim-ss3}
		\frac{\rme^{-\alpha t_n}}{\eta_n^{\beta}}
		=
		\begin{cases}
			O(1),  & \text{if }  \omega=\alpha, \\
			o(1),  & \text{if }  \omega<\alpha,
		\end{cases}
	\end{equation}
	where $f(n)=O(g(n))$ as $n\rightarrow\infty$ means that $\lim_{n\rightarrow\infty} f(n)/g(n)
	<\infty$, $f(n)=o(g(n))$ as $n\rightarrow\infty$ means that $\lim_{n\rightarrow\infty} f(n)/g(n)=0$ for two sequences $\{f(n), n \in \mathbb{N}\}$ and $\{g(n), n \in \mathbb{N}\}$.

	Furthermore, by the similar argument as in the proof of \cite[Proposition B.1]{Chen2023Approximation} and taking
	\begin{equation*}
		h(y) = \frac{1}{M}\left(\frac{\sin y}{y}
		{\bf 1}_{\{y\neq 0\}}+{\bf 1}_{\{y=0\}}\right),\quad y\in\mathbb{R},
	\end{equation*}
	with constant  $M=\sup_{z\in\R_0}|{[z\cos z-\sin z]}/{z^{2}}|\in(0,\infty)$, we can obtain that
	\begin{equation*}
		\begin{aligned}
			\mathcal{W}_1(\mu,Y_{t_n}^0)
			&\geq \left|  \E[h(Y_{t_n}^0)]-\E[h(\alpha^{-1/\alpha} Z_1)]  \right|  \\
			&= \frac{1}{2M} \left| \int_{-1}^{1}\Big[
			\mathbb{E} \left[\exp\left(i u  Y_{t_{n}}^0 \right)\right]
			-\mathbb{E}\big[\exp\big(
			i u  \alpha^{-{1}/{\alpha}}Z_{1} \big)\big]\Big]\dif  u   \right|
		\geq  C (\rme^{-\alpha t_n} + \eta_n) \\
			& =  O(\eta_n^{{1}/{\alpha}}),
		\end{aligned}
	\end{equation*}
	where the inequality arises from \eqref{com2} and the last equality holds by taking $\beta = 1/\alpha$ and $\omega = \alpha$ in claim \eqref{e:claim-ss3}.
	
	The reminder is to verify the claim \eqref{e:claim-ss3}. Recall that
	\begin{equation*}
		\omega
		= \lim_{n\to \infty} \left\{ \frac{\eta_n^{\beta} -\eta_{n+1}^{\beta}}{\eta_{n+1}^{1+\beta}} \right\}
		< \infty ,
	\end{equation*}
	which leads to $1+\frac{\eta_{n-1}^{\beta}-\eta_n^{\beta}}
	{\eta_n^{1+\beta}} \eta_n \asymp \exp(\omega \eta_n)$, where $f(n)\asymp g(n)$ as $n\rightarrow\infty$ means that $\lim_{n\to \infty} f(n)/g(n)=1$ for two sequences $\{f(n), n \in \mathbb{N}\}$ and $\{g(n), n \in \mathbb{N}\}$. Let $\omega_n=\exp(-\alpha t_n)\eta_n^{-\beta}, \forall n\in \mathbb{N}_+$, then one has
	\begin{equation*}
		\omega_{n}
		= \omega_{n-1} \rme^{-\alpha \eta_n} \cdot \frac{\eta_{n-1}^{\beta}}{\eta_n^{\beta}}
		= \omega_{n-1} \rme^{-\alpha \eta_n} \left(1+ \frac{\eta_{n-1}^{\beta}-\eta_n^{\beta}}
		{\eta_n^{1+\beta}} \eta_n \right)
		\asymp  \omega_{n-1}\rme^{(-\alpha+\omega)\eta_n}.
	\end{equation*}
	
	By induction, one gets that
	\begin{equation*}
		\omega_{n} \asymp \omega_1 \rme^{(-\alpha+\omega)\eta_1} \rme^{(-\alpha+\omega)t_n}   =  O\left(\rme^{(-\alpha + \omega) t_n} \right).
	\end{equation*}
	Thus, the claim \eqref{e:claim-ss3} holds. The proof is complete.
\end{proof}

\begin{remark}
	As an example of the decreasing step size $\gamma = (\eta_n)_{n\in \mathbb{N}}$, we set
	\[
	\eta_n = \frac{1}{\alpha^2 n}, \quad \forall \ n \in \mathbb{N}.
	\]
	Then, taking $\beta = 1/\alpha$ in Assumption \ref{assump-2}, we can verify that
	\[
	\omega = \lim_{n\to \infty} \frac{\eta_n^{\beta} - \eta_{n+1}^{\beta}}{\eta_{n+1}^{1+\beta}}
	= \lim_{n\to \infty} \alpha^2 (n+1) \left[ \left( 1+\frac1n \right)^{1/\alpha} - 1 \right]
	= \alpha.
	\]
	Assumption \ref{assump-2} and $\rme^{-\alpha t_n} = \eta_n^{1/\alpha}$ hold, as long as $\theta$ in \eqref{e:lambda} is larger such that $\theta > \alpha$.
\end{remark}

\section{Sinkhorn-Knopp algorithm} In this section, we shall use the Sinkhorn-Knopp algorithm to verify our convergence results in $\mathcal W_1$ distance by several concrete examples in queue networks. Let us first recall some preliminary of the Sinkhorn-Knopp algorithm.
\subsection{Preliminary of Sinkhorn-Knopp algorithm}
Let $\mathcal{P}(\R^d)$ be the set of probability measures on $\R^d$. Let
\begin{equation*}
	\nu_1=\sum_{i=1}^n a_i \delta_{ \bfx_i}, \ \ \ \nu_2=\sum_{i=1}^m b_j \delta_{\bfy_j}
\end{equation*}
be two discrete probabilities in $\mathcal{P}(\R^d)$ centered on $\{\bfx_i: 1 \le i \le n\} \subseteq \R^d$ and $\{\bfy_j:1 \le j \le m\} \subseteq \R^d$ respectively, where $\bfa=(a_1,\cdots,a_n)^{\prime}\in \Delta_n$ and $\bfb=(b_1,\cdots,b_m)^{\prime} \in \Delta_m$ with
\begin{equation*}
	\Delta_n = \{\bfp\in \R^n_+: \bfp^{\prime}{\rm \bf e}_n =1  \},  \quad \Delta_m = \{{\bf q} \in \R^m_+: {\bf q}^{\prime}{\rm \bf e}_m =1  \},
\end{equation*}
where $\R^n_+$ is the set of vectors $\bfp\in \R^n$ with non-negative entries and ${\rm \bf e}_n=(1,...,1)' \in \R^n$ and $\Delta_m$ is similarly defined. In this setting, the evaluation of
the Wasserstein distance corresponds to solving a network flow problem in terms of the weight vectors $\bfa$ and $\bfb$ (see, \cite[Eq. 4]{Luise2018Differential} or \cite{Bertsimas1997Introduction})
\begin{equation}\label{e:sim-1}
	\mathcal{W}_1 (\nu_1,\nu_2) = \min_{ {\bf T} \in \mathcal{C}(\bfa,\bfb)} \langle {\bf T}, \bfM  \rangle_{\rm HS},
\end{equation}
where $ \bfM =(M_{ij})_{n\times m}$ is the cost matrix with entries $M_{ij}=|\bfx_i-\bfy_j|$.

A more practical approximation of \eqref{e:sim-1} is Sinkhorn distance $S^{ \tau }(\bfa, \bfb)$, given by
\begin{equation}\label{e:sim-2}
	S^{ \tau }(\bfa,\bfb) = \langle {\bf T}^{ \tau },  \bfM  \rangle_{\rm HS}
	{\ \ \rm with \ \ } {\bf T}^{ \tau }= \arg \min_{{\bf T}\in \mathcal{C}(\bfa,\bfb)} \left\{ \langle {\bf T}, \bfM  \rangle_{\rm HS}-\frac{1}{ \tau } h({\bf T}) \right\},
\end{equation}
where $ \tau >0$ is a regularization parameter, $h({\bf T})=-\sum_{i,j=1}^{n,m}T_{ij}(\log T_{ij}-1)$ and matrix ${\bf T}=(T_{ij})_{n\times m}$. Then, we have the following approximation between $S^{ \tau }(\bfa,\bfb)$ and $ \mathcal{W}_1 (\nu_1,\nu_2)$:
\begin{lemma}{{\rm(}\cite[Proposition 1]{Luise2018Differential}{\rm )}}
	For any pair of discrete measures $\nu_1,\nu_2\in \mathcal{P}(\R^d)$ with respective weights $\bfa\in \Delta_n$ and $\bfb\in \Delta_m$, we have
	\begin{equation*}
		|S^{ \tau }(\bfa,\bfb)- \mathcal{W}_1 (\nu_1,\nu_2)|
		\leq C \rme^{- \tau },
	\end{equation*}
	where the constant $C$ is independent of $ \tau $ but depends on the supports of $\nu_1$ and $\nu_2$.
\end{lemma}

It follows from \cite[Eq. (3)]{Browne1995Piecewise} that there exist vectors $\bfr=(r_1,\cdots,r_n)^{\prime}$ and $\bfc=(c_1,\cdots,c_m)^{\prime}$ such that the matrix ${\bf T}^{ \tau }=(T^{ \tau }_{ij})_{n\times m}$ satisfies
\begin{equation*}
	T^{ \tau }_{ij} = r_i \rme^{- \tau  M_{ij}} c_j, \quad 1\leqslant i \leqslant n, \  1\leqslant j\leqslant m.
\end{equation*}
To make notations simple, we write   $A^{ \tau }_{ij}=\rme^{- \tau  M_{ij}}$ for each $i,j$ and denote by $\bfA^{ \tau }$ the corresponding matrix. Then,
\begin{equation}\label{e:sim-3}
	{\bf T}^{ \tau } = {\rm diag}(\bfr) \bfA^{ \tau } {\rm diag} (\bfc).
\end{equation}
It follows from \cite[Section 2]{Knight2008The} that the vectors $\bfc$ and $\bfr$ can be solved by
\begin{equation*}
	\bfc  = \bfb./ ((\bfA^{ \tau })^{\prime} \bfr)
	\quad \text{ and } \quad
	\bfr  = \bfa./(\bfA^{ \tau } \bfc).
\end{equation*}
The above expressions for vectors $\bfr$ and $\bfc$  suggest a fixed point iteration as the following:
\begin{equation*}
	\bfc_{k+1}  =  \bfb./ ((\bfA^{ \tau })^{\prime} \bfr_k),
	\quad \text{ and } \quad
	\bfr_{k+1}  =  \bfa./(\bfA^{ \tau } \bfc_{k+1}), \quad k \in \mathbb{N}.
\end{equation*}
Furthermore, it follows from \eqref{e:sim-2} and \eqref{e:sim-3} that $ \mathcal{W}_1 (\nu_1,\nu_2)$ can be approximated by $S^{ \tau }(\bfa,\bfb)$.

\subsection{Simulations for two queue networks}
We point out that the simulation for one dimensional symmetric $\alpha$-stable random variables with $\alpha\in (1,2)$ can be found in \cite[Theorem 1.3]{Nolan2020Univariate} or \cite[Proposition 1.7.1]{Taqqu1994Stable}.

In the following examples, let the step size $\Lambda=(\eta_n)_{n\in \mathbb{N}}$ of the EM scheme  be $\eta_n=1/(10+n)$, $\forall\ n\in \mathbb{N}$, and the corresponding time be $t_n=\sum_{k=1}^n \eta_{k}$, $\forall\ n\in \mathbb{N}$ with $t_0=0$. Besides, let the regularization parameter $ \tau =20$ in Sinkhorn-Knopp algorithm for $1$-dimension. In the case of $2$-dimension, we let the regularization parameter in the Sinkhorn-Knopp algorithm $ \tau  = 80$ to obtain a better estimation.

\begin{example}[\bf{Stable piecewise Ornstein-Uhlenbeck process}]\label{ex stable POU}
	As shown in \cite{pang2010queueing}, {assuming that the arrival process is heavy-tailed and under the proper scaling, it converges to a symmetric $\alpha$-stable motion,  the scaling limit for the $G/M/N+M$ queues with such an arrival process}  is
	$$X(t) = X(0) -\beta \varrho t  - \int_0^t [\varrho( X(s) \wedge 0) + \gamma (X(s) \vee0) ] ds+Z_\alpha(t),$$
	where $Z_\alpha(t)$ is a symmetric $\alpha$-stable process with $\alpha \in (1,2)$. Here we take $\gamma =2$, $\varrho=1$, $\beta=1$, so that $\ell=-1$.
	The parameters in our SDE are: $\alpha \in (1,2)$, $d=1$,  $\ell=-1$, $M=1$, $(M+(\Gamma-M)v)=2$ and $\sigma=1$. Hence the coefficient $g$ is
	\begin{equation*}
		g(x)
		=
		\begin{cases}
			-1-2x,  & \text{if }  x >0, \nonumber \\
			-1-x,  & \text{if }  x \leq 0,
		\end{cases}
	\end{equation*}
	and the corresponding process $( X_t)_{t\geq 0}$ satisfies
	\begin{equation*}
		\dif  X_t = g( X_t) \dif t + \dif Z_t, \quad X_0  = x,
	\end{equation*}
	where $(Z_t)_{t\geq 0}$ is the symmetric $\alpha$-stable process with L\'{e}vy measure $\nu_1(\dif z)=c_{\alpha}|z|^{-1-\alpha} \dif z$.

	The ergodic measure $\mu$ in Theorem \ref{thm:XY-dis} does not have an explicit form, and we shall take the law of $Y_{t_{n}}^x$ with a sufficiently large $n$ as its estimation, denoted by $\hat \mu$. We shall use the following four steps to estimate $ \mathcal{W}_1 (\mu,\mathcal{L}(Y_{t_n}^{\bfx}))$. In our simulation, we take $\alpha\in \{1.25,1.5,1.75\}$.
	
	Step 1. Empirical distribution of $\mathcal{L}(Y_{t_n}^{\bfx})$. Let $N_1=1000$ be iteration for the EM scheme and generate $N_2=2000$ trajectories for the Markov chain $(Y_{t_{n}})_{n\in \mathbb{N}_0}$, denoted by $\{Y_{t_{n}}(\omega_i) \}_{i=1}^{N_2}$, that is, for any $i\in \{1,2,\cdots,N_2\}$ and $n\in \{0,1,2,\cdots,N_1-1\}$, one has
	\begin{equation*}
		Y_{t_{n+1}}(\omega_i) = Y_{t_n}(\omega_i) + \eta_{n+1} g(Y_{t_n}(\omega_i))+\frac{1}{\tilde{\sigma}}  \eta_{n+1}^{{1}/{\alpha}} \xi_{n+1}(\omega_i), \quad  Y_0(\omega_i) = x = 0,
	\end{equation*}
	where the constant $\tilde{\sigma}=(\frac{\alpha}{2c_{\alpha}})
	^{1/\alpha}$  and $(\xi_n)_{n\in \mathbb{N}}$ are stable random variables. Then, the law $\mathcal{L}(Y_{t_{N_1}}^0)$ can be approximated as
	$\mathcal{L}(Y_{t_{N_1}}^0)
	\approx N_2^{-1}\sum_{i=1}^{N_2} \delta_{Y^{0}_{t_{N_1}}(\omega_i)}.$
	
	Step 2. Estimation of $\mu$. Let $N_4=2N_1$ be iteration for the EM scheme and generate $N_2=2000$ trajectories for the Markov chain $(\tl{Y}_{t_{n}})_{n\in \mathbb{N}_0}$ with different initial values $\bfy={\rm linspace}(-20,20,N_2)$ which are all independent of those in Step 1, denote by $\{\tilde{Y}_{t_{n}}(\omega_j)\}_{j=1}^{N_2}$, that is, for any $j\in \{1,2,\cdots,N_2\}$ and $n\in \{0,1,2,\cdots,N_4-1\}$, one knows
	\begin{equation*}
		\tilde Y_{t_{n+1}}(\omega_j) = \tilde Y_{t_n}(\omega_j) + \eta_{n+1} g(\tilde Y_{t_n}(\omega_j))+ \frac{1}{\tilde{\sigma}} \eta_{n+1}^{{1}/{\alpha}} \xi_{n+1}(\omega_j), \quad  \tilde Y_0(\omega_j) = \bfy(j),
	\end{equation*}
	where $\bfy(j)$ means the $j$-th element of vector $\bfy$, and $(\xi_n)_{n\in \mathbb{N}}$ are $\alpha$ stable random variables. We use the empirical measures
	$\hat \mu=N_2^{-1}\sum_{j=1}^{N_2} \delta_{\tilde Y^{\bfy(j)}_{t_{N_4}}(\omega_j)}$ as an estimation of the ergodic measure $\mu$.
	
	Step 3. Run the Sinkhorn-Knopp algorithm. Let vectors ${\bf c}_0={\bf r}_0={\rm \bf e}_{N_2}$, $\bfa=\bfb=N_2^{-1} {\rm \bf e}_{N_2}$, two empirical measures
	$N_2^{-1}\sum_{i=1}^{N_2} \delta_{Y^{0}_{t_{N_1}}(\omega_i)}$ and
	$\hat \mu$, one obtains the value of $S^{\tau}$ which is the approximation of $ \mathcal{W}_1 (\mu, \mathcal{L}(Y_{t_{N_1}}^0))$.
	
	Step 4. Obtain a better approximation. Repeat the above three steps for $N_3=20$ times, and we can obtain the values of $ \mathcal{W}_1 ^{(i)}(\mu, \mathcal{L}(Y_{t_{N_1}}^0))$ with $i=1,\cdots,N_3$ in Step 3, and then take the mean of those $N_3=20$ distances to approximate the Wasserstein-1 distance.
	
	In addition, we can get the following figures. Figures \ref{figure S4}, \ref{figure S5} and \ref{figure S6} are the Wasserstein-1 distances for various iterations with $\alpha\in \{1.25,1.5,1.75\}$, that is, $ \mathcal{W}_1 (\mu,\mathcal{L}(Y_{t_{k}}^0))$ with  $k\in\{10,20,30,\cdots,1000\}.$
	\begin{figure*}[!httpb]
		\centering
		\subfigure[\label{figure S4}Wasserstein-1 distances with different iterations and  $\alpha=1.25$]{
			\begin{minipage}[t]{0.3\linewidth}
				\centering \includegraphics[width=\linewidth]{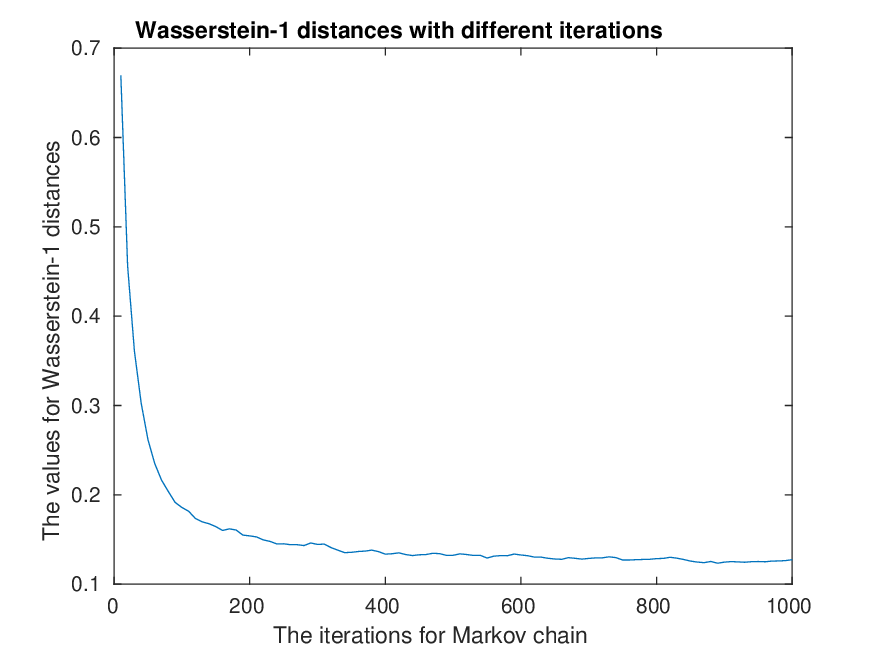}
			\end{minipage}
		}%
		\subfigure[\label{figure S5}Wasserstein-1 distances with different iterations and $\alpha=1.5$]{
			\begin{minipage}[t]{0.3\linewidth}
				\centering \includegraphics[width=\linewidth]{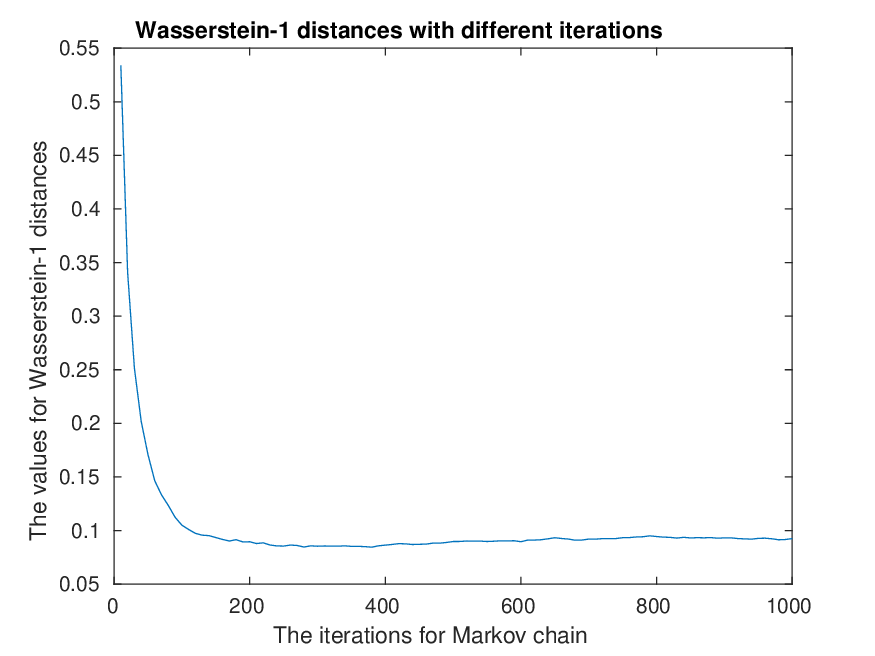}
			\end{minipage}
		}%
		\subfigure[\label{figure S6}Wasserstein-1 distances with different iterations and $\alpha=1.75$]{
			\begin{minipage}[t]{0.3\linewidth}
				\centering \includegraphics[width=\linewidth]{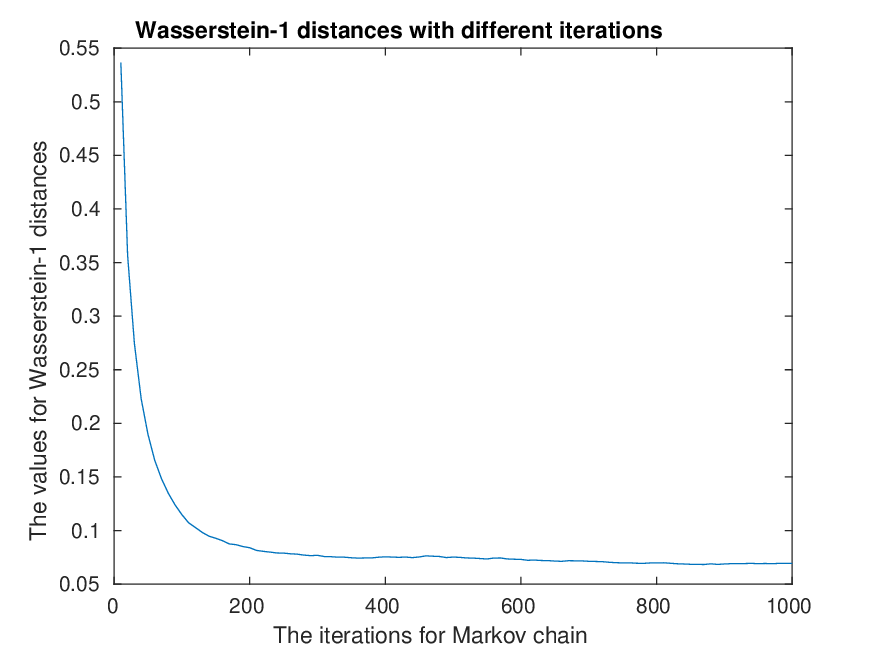}
			\end{minipage}
		}%
		\centering
		\caption{Wasserstein-1 distances for Example \ref{ex stable POU} with different iterations and $\alpha=1.25$, $1.5$ and $1.75$}
		\label{fig2}
	\end{figure*}
	
\end{example}

\begin{example}[\bf{The stable noise with independent components.}] \label{exam:2dim}
	
	We consider a two-class $G/M/N+M$ queues (the `$V$' network, see, \cite[pp. 1086-1087]{Arapostathis2019Ergodicity}), where the limit will have $\bfM={\rm diag}(m_1,m_2)$ with $m_1=2,m_2=1$ and $\bmGamma={\rm diag}(\gamma_1, \gamma_2)$ with $\gamma_1=1,\gamma_2=2$, we can choose the control $\bfv=(1/2,1/2)^{\prime}$, and $\bm{\ell}=(-1/2, -1/4)^{\prime}$, then the coefficient $g$ is given as
	\begin{equation*}
		g( \bfx)
		= \left\{
		\begin{aligned}
			\begin{pmatrix}
				-{1}/{2}  \\
				- {1}/{4}
			\end{pmatrix}  &-
			\begin{pmatrix}
				{3}/{2} &  - {1}/{2} \\
				{1}/{2} & {3}/{2}
			\end{pmatrix} \bfx ,  \quad \text{if }  x_1+x_2 >0,  \\
			\begin{pmatrix}
				-{1}/{2}  \\
				- {1}/{4}
			\end{pmatrix} &-
			\begin{pmatrix}
				2 & 0  \\
				0 & 1
			\end{pmatrix}\bfx,  \quad \text{if }  x_1+x_2 \leq 0,
		\end{aligned}
		\right.
	\end{equation*}
	where $\bfx = (x_1,x_2)^{\prime} \in \bbR^2$. The corresponding process $(\bfX_t)_{t\geq 0}$ satisfies that
	\begin{equation*}
		\dif \bfX_t = g(\bfX_t) \dif t + \dif \bfZ_t, \quad \bfX_0 = \bfx,
	\end{equation*}
	where $\bfZ_t=(Z_{1,t},Z_{2,t})^{\prime}$ and $Z_{i,t}$, $i=1,2$ are {independent} one-dimensional symmetric $\alpha$-stable processes with L\'{e}vy measure $\nu_1(\dif z)=c_{\alpha}|z|^{-1-\alpha} \dif z$. We shall use the following four steps to estimate $ \mathcal{W}_1 (\mu,\mathcal{L}(\bfY_{t_n}^x))$. In our simulation, we choose $\alpha=1.8$.
	
	Step 1. Empirical distribution of $\mathcal{L}(\bfY_{t_n}^x)$. Let $N_1 = 800$ be iteration for the EM scheme and generate $N_2=6000$ trajectories for the Markov chain $(\bfY_{t_{n}})_{n\in \mathbb{N}_0}$ denote by $\{\bfY_{t_{n}}(\omega_i) \}_{i=1}^{N_2}$, that is, for any $i\in \{1,2,\cdots,N_2\}$ and $n\in \{0,1,2,\cdots,N_1-1\}$, one has $\bfY_{0}(\omega_i)={\bf 0}$ and
	\begin{equation*}
		\begin{aligned}
			Y_{1,t_{n+1}}(\omega_i) &= Y_{1,t_n}(\omega_i) + \eta_{n+1} [g(\bfY_{t_n}(\omega_i))](1)+
			\frac{1}{\tilde{\sigma}}  \eta_{n+1}^{ {1}/{\alpha} } \xi_{1,n+1}(\omega_i), \\
			Y_{2,t_{n+1}}(\omega_i) &= Y_{2,t_n}(\omega_i) + \eta_{n+1} [g(\bfY_{t_n}(\omega_i))](2)
			+\frac{1}{\tilde{\sigma}}  \eta_{n+1}^{ {1}/{\alpha} } \xi_{2,n+1}(\omega_i),
		\end{aligned}
	\end{equation*}
	where $\bfY_{t_n}=(Y_{1,t_n},Y_{2,t_n})^{\prime}$,
	and $[{\bf a}](i)$ means the i-th element of vector ${\bf a}$ and the constant $\tilde{\sigma}=(\frac{\alpha}{2c_{\alpha}})^{1/\alpha}$  and  $(\xi_{i,n})_{n\in \mathbb{N}},i=1,2$ are stable random variables. Then, the law $\mathcal{L}(\bfY_{t_{N_1}}^{\bf 0})$ can be approximated as
	$\mathcal{L}(\bfY_{t_{N_1}}^{\bf 0})
	\approx N_2^{-1}\sum_{i=1}^{N_2} \delta_{\bfY^{{\bf 0}}_{t_{N_1}}(\omega_i)}.$
	
	Step 2. Estimation of $\mu$. Let $N_4= 10 N_1$ be iteration for the EM scheme and generate $N_2$ trajectories for the Markov chain $(\bfY_{t_{n}})_{n\in \mathbb{N}_0}$ with different initial values which are all independent of those in Step 1, denote by $\{\tilde{\bfY}_{t_{n}}(\omega_j)\}_{j=1}^{N_2}$, that is, for any $j\in \{1,2,\cdots,N_2\}$ and $n\in \{0,1,2,\cdots,N_4-1\}$, one knows $\tilde{\bfY}_0(\omega_j)=({\bfy}(j),{\bfy}(j))$ and
	\begin{equation*}
		\begin{aligned}
			\tilde Y_{1,t_{n+1}}(\omega_j) &= \tilde Y_{1,t_n}(\omega_j) + \eta_{n+1} [g(\tilde \bfY_{t_n}(\omega_j))](1)
			+\frac{1}{\tilde{\sigma}}  \eta_{n+1}^{ {1}/{\alpha} } \xi_{1,n+1}(\omega_j),  \\
			\tilde Y_{2,t_{n+1}}(\omega_j) &= \tilde Y_{2,t_n}(\omega_j) + \eta_{n+1} [g(\tilde \bfY_{t_n}(\omega_j))](2)
			+\frac{1}{\tilde{\sigma}}  \eta_{n+1}^{ {1}/{\alpha} } \xi_{2,n+1}(\omega_j),
		\end{aligned}
	\end{equation*}
	where ${\bfy}(j)$ means the $j$-th element of vector ${\bf y}={\rm linspace}(-20,20,N_2)$, and the random variables $(\xi_{i,n})_{n\in \mathbb{N}},i=1,2$ have stable distribution. We use the empirical measures
	$\hat \mu=N_2^{-1}\sum_{j=1}^{N_2} \delta_{\tilde \bfY^{({ \bfy}(j),{\bfy}(j))}_{t_{N_4}}(\omega_j)}$
	as an estimation of the ergodic measure $\mu$.
	
	Step 3. Run the Sinkhorn-Knopp algorithm. Let vectors ${\bf c}_0={\bf r}_0={\rm \bf e}_{N_2}$, $\bfa=\bfb=N_2^{-1} {\rm \bf e}_{N_2}$, two empirical measures
	$N_2^{-1}\sum_{i=1}^{N_2} \delta_{\bfY^{{\bf 0}}_{t_{N_1}}(\omega_i)}$ and $\hat \mu$,
	one obtains the value of $S^{\tau}$ which is the approximation of $ \mathcal{W}_1 (\mu, \mathcal{L}(\bfY_{t_{N_1}}^{\bf 0}))$.
	
	Step 4. Obtain a better approximation. Repeat the above three steps for $N_3=20$ times, and we can obtain the values of $ \mathcal{W}_1 ^{(i)}(\mu, \mathcal{L}(\bfY_{t_{N_1}}^{\bf 0}))$ with $i=1,\cdots,N_3$ in Step 3, and then take the mean of those $N_3=20$ distances to approximate the Wasserstein-1 distance.
	
	In addition, we can get the following figure for the Wasserstein-1 distances for various iterations with $\alpha=1.8$. Figure \ref{figure 2dS84} is for the number of sample $N_2=6000$ and the iteration of EM scheme $N_1=800$.
	\begin{figure*}[httpb]
		\centering
		\includegraphics[width=10cm,height=8cm] {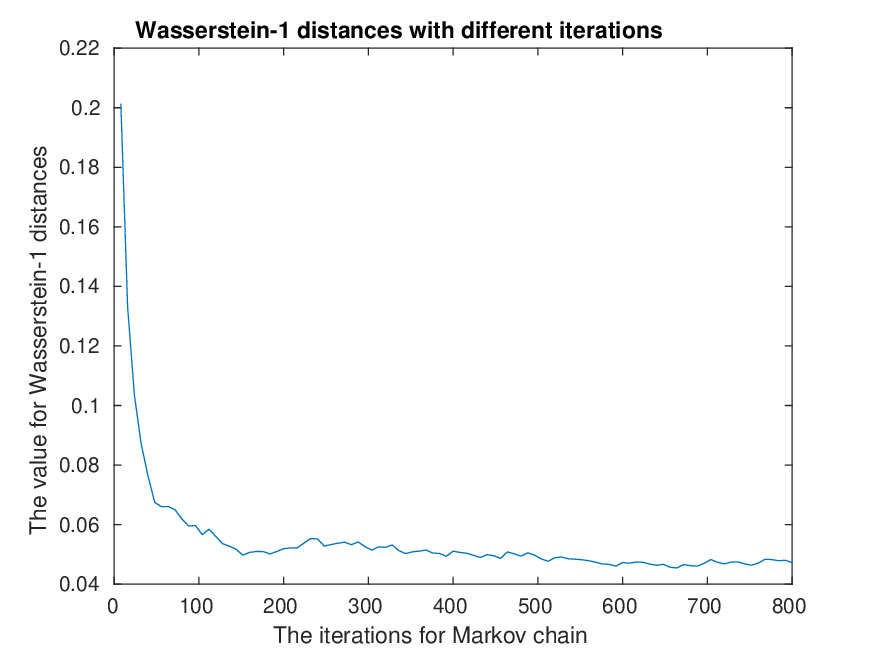}
		\caption{Wasserstein-1 distances for Example \ref{exam:2dim} with different iterations and $\alpha=1.8,N_2=6000$.}\label{figure 2dS84}
	\end{figure*}
	
\end{example}

\begin{appendix}
	
	\section{Proofs of the lemmas in Section \ref{sec:Ingredients}}\label{app_A}
	
	\subsection{Proof of Lemma \ref{lem:QNge}}\label{app_A1}
	
	\begin{proof}[Proof of Lemma \ref{lem:QNge}]
		(i) Recall that
		\begin{equation*}
			-\nabla g_{\e}( \bfx) =  \bfM  - ( \bfM - \bmGamma) \bfv{\rm \bf e}_d^{\prime} \dot{\rho_{\e}}({\rm \bf e}_d^{\prime} \bfx).
		\end{equation*}
		
		{\bf Case 1:} If ${\rm \bf e}_d^{\prime}  \bfx < -\e$, we know $\dot{\rho_{\e}}({\rm \bf e}_d^{\prime}  \bfx)=0$ and
		$-\nabla g_{\e}( \bfx) \equiv  \bfM ,$ it follows from Lemma \ref{lem:Q} that matrices
		\begin{equation*}
			[ -\nabla g_{\e}( \bfx)]^{\prime}  \bfQ  +  \bfQ  [-\nabla g_{\e}( \bfx)] \equiv   \bfM ^{\prime}  \bfQ  +  \bfQ   \bfM
		\end{equation*}
		are positive definite.
		
		{\bf Case 2:} If ${\rm \bf e}_d^{\prime}  \bfx > \e$, $\dot{\rho_{\e}}({\rm \bf e}_d^{\prime} \bfx) = 1$. Then one has
		\begin{equation*}
			-\nabla g_{\e}( \bfx) =  \bfM  - ( \bfM - \bmGamma) \bfv{\rm \bf e}_d^{\prime},
		\end{equation*}
		and the result can be obtained similarly from Lemma \ref{lem:Q}.
		
		{\bf Case 3:} If $|{\rm \bf e}_d^{\prime}  \bfx| \leq \e$, we know $\dot{\rho_{\e}}({\rm \bf e}_d^{\prime}  \bfx) \in [0,1]$ and
		\begin{equation}\label{e:Q-Q}
			\begin{aligned}
				&\mathrel{\phantom{=}}
				[-\nabla g_{\e}( \bfx)]^{\prime}  \bfQ  +  \bfQ  [-\nabla g_{\e}( \bfx)] \\
				&=
				[ \bfM  - ( \bfM - \bmGamma) \bfv{\rm \bf e}_d^{\prime} \dot{\rho_{\e}}({\rm \bf e}_d^{\prime}  \bfx)]^{\prime}  \bfQ  +  \bfQ  [ \bfM  - ( \bfM - \bmGamma) \bfv{\rm \bf e}_d^{\prime} \dot{\rho_{\e}}({\rm \bf e}_d^{\prime}  \bfx)]   \\
				&=
				\dot{\rho_{\e}}({\rm \bf e}_d^{\prime}  \bfx) \big\{ [ \bfM  - ( \bfM - \bmGamma) \bfv{\rm \bf e}_d^{\prime}]^{\prime}  \bfQ  +  \bfQ  [ \bfM  - ( \bfM - \bmGamma) \bfv{\rm \bf e}_d^{\prime}] \big\}   \\
				&\mathrel{\phantom{=}} +
				(1-\dot{\rho_{\e}}({\rm \bf e}_d^{\prime}  \bfx))[ \bfM ^{\prime} \bfQ  +  \bfQ  \bfM ].
			\end{aligned}
		\end{equation}
		It follows from Lemma \ref{lem:Q} that the matrices
		\begin{equation*}
			[ \bfM  - ( \bfM - \bmGamma) \bfv{\rm \bf e}_d^{\prime}]^{\prime}  \bfQ  +  \bfQ  [ \bfM  - ( \bfM - \bmGamma) \bfv{\rm \bf e}_d^{\prime}]
			\quad \text{ and } \quad
			[ \bfM ^{\prime} \bfQ  +  \bfQ  \bfM ]
		\end{equation*}
		are both positive definite. By the fact $\dot{\rho_{\e}}({\rm \bf e}_d^{\prime}  \bfx) \in [0,1]$, we know that the matrix $[-\nabla g_{\e}( \bfx)]^{\prime}  \bfQ  +  \bfQ  [-\nabla g_{\e}( \bfx)]$ with $|{\rm \bf e}_d^{\prime}  \bfx| \leq \e$ is also positive definite.
		
		Combining above {\bf Cases 1, 2} and {\bf 3}, one can derive the desired result.
		
		(ii) According to the definitions of $\lambda_1$ and $\lambda_2$,
		we know that
		\begin{equation*}
			\begin{aligned}
				-\lambda_1
				&= \lambda_{\max}([- \bfM ^{\prime}] \bfQ + \bfQ [- \bfM ]), \\
				-\lambda_2
				&= \lambda_{\max}((- \bfM ^{\prime}+{\rm \bf e}_d  \bfv^{\prime}( \bfM ^{\prime}- \bmGamma)) \bfQ  + \bfQ (- \bfM +( \bfM - \bmGamma) \bfv{\rm \bf e}_d^{\prime})).
			\end{aligned}
		\end{equation*}
		By the fact $\lambda=\lambda_1 \wedge \lambda_2$ and \eqref{e:Q-Q}, one has
		\begin{equation*}
			-\lambda = (-\lambda_1) \vee  (-\lambda_2)
			=  \sup_{ \bfx\in \R^d} \left\{ \lambda_{\max}([\nabla g_{\e}( \bfx)]^{\prime}  \bfQ  +  \bfQ  [\nabla g_{\e}( \bfx)])  \right\}.
		\end{equation*}
		
		(iii) Applying $-\lambda$ above,  one can obtain that for any $ \bfx,\bfy \in \R^d$,
		\begin{equation*}
			\begin{aligned}
				\langle  \bfQ ( \bfx-\bfy),g_{\e}(\bfx)-g_{\e}(\bfy)   \rangle
				&= \langle  \bfQ (\bfx-\bfy),\int_0^{1} \nabla g_{\e}(\bfy+r(\bfx-\bfy)) [\bfx-\bfy] \dif r \rangle \nonumber \\
				&= \int_0^1 [\bfx-\bfy]^{\prime}  \bfQ  \nabla g_{\e}(\bfy+r(\bfx-\bfy)) [\bfx-\bfy] \dif r
				\leq -\frac{\lambda}{2}|\bfx-\bfy|^2.
			\end{aligned}
		\end{equation*}
		
		(iv) Recall that
		\begin{equation*}
			g( \bfx) = \bmell- \bfM ( \bfx-\langle {\rm \bf e}_d, \bfx \rangle^+  \bfv)
			-\langle {\rm \bf e}_d, \bfx \rangle^+  \bmGamma  \bfv.
		\end{equation*}
		It follows from Lemma \ref{lem:Q} that for $ {\rm \bf e}_d^{\prime}  \bfx > 0$
		\begin{equation*}
			\langle g(\bfx),\bfQ \bfx  \rangle
			= \langle \bmell-( \bfM + ( \bmGamma- \bfM ) \bfv{\rm \bf e}_d^{\prime}) \bfx, \bfQ \bfx \rangle
			\ \leq \ -\frac{\lambda_2}{2}|\bfx|^2+|\bmell| \lambda_{\max}(\bfQ)|\bfx|.
		\end{equation*}
		In addition, it follows from Lemma \ref{lem:Q} that for $ {\rm \bf e}_d^{\prime}  \bfx \leq 0$
		\begin{equation*}
			\langle g(\bfx),\bfQ \bfx  \rangle
			= \langle \bmell-\bfM \bfx, \bfQ \bfx \rangle
			\ \leq \ -\frac{\lambda_1}{2}|\bfx|^2+|\bmell| \lambda_{\max}(\bfQ)|\bfx|.
		\end{equation*}
		Combining above two cases and the fact $\lambda=\lambda_1 \wedge \lambda_2$, the desired inequality holds. The proof is complete.
	\end{proof}

	\subsection{Proof of Lemma \ref{lem:JF-est}}\label{app_A2}
	
	\begin{proof}[Proof of Lemma \ref{lem:JF-est}.]
		The inequality holds obviously for $\bfu={\bf 0}$ due to $\nabla_\bfu  \bfX_t^{\e, \bfx}={\bf 0}$, $\forall\ t\geq 0$. Thus, it suffices to show the inequality for $\bfu\in \R^d_0$. For the matrix $ \bfQ $ in Lemma \ref{lem:QNge}, define a function $\widehat{V}(\bfx): \bbR^d \to \bbR$ as
		$$
		\widehat{V}( \bfx) = \frac{1}{2}\langle  \bfx, \bfQ  \bfx \rangle.
		$$
		Using It\^{o}'s formula, it follows from Lemma \ref{lem:QNge} that
		\begin{equation*}
			\begin{aligned}
				\dif \widehat{V}(\nabla_\bfu  \bfX_t^{\e, \bfx})
				&= \langle  \bfQ  \nabla_\bfu  \bfX_t^{\e, \bfx}, \nabla g_{\e}( \bfX^{\e, \bfx}_t) \nabla_\bfu  \bfX_t^{\e, \bfx} \rangle \dif t \\
				&=  [\nabla_\bfu  \bfX_t^{\e, \bfx}]^{\prime} \frac{ \bfQ  \nabla g_{\e}( \bfX^{\e, \bfx}_t)+[\nabla g_{\e}( \bfX^{\e, \bfx}_t)]^{\prime} \bfQ }{2} \nabla_\bfu  \bfX_t^{\e, \bfx}  \dif t  \\
				&\leq -\frac{\lambda}{2} \frac{ |\nabla_\bfu  \bfX_t^{\e, \bfx}|^2 }{\widehat{V}(\nabla_\bfu  \bfX_t^{\e, \bfx})} \widehat{V}(\nabla_\bfu  \bfX_t^{\e, \bfx}) \dif t \\
				&\leq -\frac{\lambda}{\lambda_{\max}( \bfQ )} \widehat{V}(\nabla_\bfu  \bfX_t^{\e, \bfx}) \dif t,
			\end{aligned}
		\end{equation*}
		where the first inequality holds from the fact $\nabla_\bfu  \bfX_t^{\e, \bfx}\neq {\bf 0}$, therefor $\widehat{V}(\nabla_\bfu  \bfX_t^{\e, \bfx})>0$ for all $\bfu\neq {\bf 0}$. Hence, one derives that
		\begin{equation*}
			\widehat{V}(\nabla_\bfu  \bfX_t^{\e, \bfx})
			\leq \exp\left\{ -\frac{\lambda}{\lambda_{\max}( \bfQ )} t \right\} \widehat{V}(\bfu).
		\end{equation*}
		The definition of $\widehat{V}( \bfx)$ leads that $\lambda_{\min}( \bfQ ) | \bfx|/2 \leq  \widehat{V}( \bfx) \leq \lambda_{\max}( \bfQ )| \bfx|/2$. Thus, one can obtain
		\begin{equation*}
			\frac{1}{2}\lambda_{\min}( \bfQ )|\nabla_\bfu  \bfX_t^{\e, \bfx}|^2
			\leq
			\frac{1}{2} \lambda_{\max}( \bfQ )
			\exp\left\{ -\frac{\lambda}{\lambda_{\max}( \bfQ )} t \right\} |\bfu|^2,
		\end{equation*}
		which implies the desired result immediately.
	\end{proof}
	
	\subsection{Proof of Lemma \ref{l:XeCon}}\label{app_A3}
	
	\begin{proof}[Proof of Lemma \ref{l:XeCon}]
		We claim that
		\begin{equation} \label{e:PN=0}
			\PP(N) = 0, \quad \text{with} \quad 	
			N =	\left\{
			\int_{0}^{\infty} \|\nabla g_{\e}(\bfX^{\e,\bfx}_{s})\|_{ {\rm op} } \I_{\{{\rm  \bf e}_d^{\prime}  \bfX^{\bfx}_{s}=0\}} \dif s
			\neq 0
			\right\}.
		\end{equation}
		Indeed, for any $T>0$, by $\|\nabla g_{\e}(\bfx)\|_{{\rm op}} \leq C_{{\rm op}}$ for all $\bfx \in \R^d$ and $\e\in(0,1)$,  we have
		\begin{equation}\label{e:ET}
			\begin{aligned}
				\E \int_{0}^{T}
				\|\nabla g_{\e}(\bfX^{\e,\bfx}_{s})\|_{ {\rm op} } \I_{\{{\rm \bf e}_d^{\prime}  \bfX_{s}^{\bfx} =0 \}} \dif s
				&= \int_{0}^{T} \E[\|\nabla g_{\e}(\bfX^{\e,\bfx}_{s})\|_{{\rm op}} \I_{\{{\rm \bf e}_d^{\prime}  \bfX_{s}^{\bfx} =0 \}}] \dif s   \\
				& \leq    \int_{0}^{T}  C_{{\rm op}}  \E  \I_{\{{\rm \bf e}_d^{\prime} \bfX_{s}^{\bfx} =0 \}} \dif s
				\ = \ 0.
			\end{aligned}
		\end{equation}
		Since \eqref{e:ET} holds for all $T>0$, we see that
		\begin{equation*}
			\E \int_{0}^{\infty} \|\nabla g_{\e}(\bfX^{\e,\bfx}_{s})\|_{ {\rm op} } \I_{\{{\rm \bf e}_d^{\prime} \bfX^{\bfx}_{s} =0 \}} \dif s
			= 0,
		\end{equation*}
		hence \eqref{e:PN=0} holds.
		
		Recall the definition of $\bfJ^{\e,\bfx}_{s,t}$ and define
		\begin{equation*}
			\hat{\bfJ}^{\e,\bfx}_{s,t}
			:= \exp \left(\int_s^t \nabla g_{\e} (\bfX_{r}^{\e,\bfx})  \I_{\{{\rm \bf e}_d^{\prime}  \bfX^{\bfx}_{r} \neq 0 \}}\dif r \right).
		\end{equation*}
		It is easy to verify that
		\begin{equation} \label{e:JstConN}
			\lim_{\e \rightarrow 0} \hat{\bfJ}^{\e,\bfx}_{s,t}
			=\bfJ^{\bfx}_{s,t}, \quad
			0 \leq s \leq t < \infty.
		\end{equation}
		On the event $N^c$, we know $\int_{0}^{\infty} \nabla g_{\e}(\bfX^{\e,\bfx}_{s}) \I_{\{{\rm \bf e}_d^{\prime} \bfX^{\bfx}_{s} =0 \}} \dif s={\bf 0}$ and thus
		\begin{equation*}
			\exp\left(\int_{s}^{t} \nabla g_{\e}(\bfX^{\e,\bfx}_{r}) \I_{\{{\rm \bf e}_d^{\prime} \bfX^{\bfx}_{r} = 0 \}} \dif r \right)={\bf I},
			\quad 0 \leq s \leq t < \infty.
		\end{equation*}
		Since ${\bf I}$ commutes with any matrix, on the event $N^c$, we get
		\begin{equation*}
			\hat{\bfJ}^{\e,\bfx}_{s,t}
			= \hat{\bfJ}^{\e,\bfx}_{s,t} \exp \left(\int_{s}^{t} \nabla g_{\e}(\bfX^{\e,\bfx}_{r})  \I_{\{{\rm \bf e}_d^{\prime} \bfX^{\bfx}_{r} = 0 \}} \dif r \right)
			\ = \ \bfJ^{\e,\bfx}_{s,t},
			\quad 0 \leq s \leq t < \infty.
		\end{equation*}
		This, combining with \eqref{e:JstConN}, implies that
		\begin{equation*}
			\lim_{\e \rightarrow 0}  \bfJ^{\e,\bfx}_{s,t} = \bfJ^{\bfx}_{s,t}, \quad 0 \leq s \leq t < \infty, \quad \text{On the event }\ N^c.
		\end{equation*}
		Note that $\bfJ^{\e,\bfx}_{s,t}$ and $\bfJ^{\bfx}_{s,t}$ are matrices, hence the above pointwise convergence implies the convergence in operator.
	\end{proof}
	
	\subsection{Proof of Lemma \ref{lem:moment-est}}\label{app_A5}
	
	\begin{proof}[Proof of Lemma \ref{lem:moment-est}.]
		(i) For the positive definite matrix $ \bfQ $ in Lemma \ref{lem:Q}, define a function $V: \R^d \to [1,\infty)$ as
		\begin{equation*}
			V( \bfx) = \big(1+\langle  \bfx,  \bfQ  \bfx \rangle \big)^{{1}/{2}}, \quad  \bfx\in \R^d .
		\end{equation*}
		Then, we obtain
		\begin{equation}\label{lem:moment-est-p1}
			\lambda^{1/2}_{\min}( \bfQ )| \bfx| \vee 1 \leq V( \bfx) \leq 1+ \lambda^{1/2}_{\max}( \bfQ )| \bfx|
		\end{equation}
		and
		\begin{equation*}
			\nabla V( \bfx)
			=  \frac{1}{V( \bfx)} \bfQ  \bfx \ ,
			\quad
			\nabla^2 V( \bfx)
			=  \frac{-1}{V^3( \bfx)} \bfQ  \bfx \bfx^{\prime} \bfQ
			+\frac{1}{V( \bfx)} \bfQ ,
			\quad
			\forall  \bfx\in \R^d.
		\end{equation*}
		It is easy to check that $\|\nabla V \|_{\infty} \leq 1$ and $\| \nabla^2 V \|_{\rm HS,\infty}\leq 1$. We will find a differential inequality of $\E V( \bfX_t^{\e, \bfx})$ firstly, and split the proof into three parts based on the type of the noises $(\bfZ_t)_{t\geq 0}$.
		
		We shall prove the following Lyapunov condition in three cases: there exists some positive constant $C$ independent of $\e$ such that
		\begin{equation}\label{e:Lya-Con}
			\mathcal{A}_{\e}^{\alpha}V( \bfx)
			\leq -\frac{\lambda}{2\lambda_{\max}( \bfQ )}V( \bfx)+C.
		\end{equation}	
		{\bf Case 1: $\alpha=2$.} Equation  \eqref{e:Ae} yields that
		\begin{equation}\label{e:AeV-BM}
			\mathcal{A}_{\e}^{\alpha} V( \bfx) = \langle g_{\e}( \bfx),\nabla V( \bfx) \rangle + \frac{1}{2} \langle \nabla^2 V( \bfx), \bmsigma \bmsigma^{\prime} \rangle_{\rm HS} .
		\end{equation}
		For the first term, by a straightforward calculation and  \eqref{e:dis-con}, there exists some positive constant $C$ independent of $\e$ such that
		\begin{equation}\label{e:gV-BM}
			\begin{aligned}
				\langle g_{\e}( \bfx),\nabla V( \bfx) \rangle
				&= \frac{1}{V( \bfx)} \langle  \bfQ  \bfx, g_{\e}( \bfx)-g_{\e}(\mathbf{0}) \rangle
				+\frac{1}{V( \bfx)} \langle  \bfQ  \bfx, g_{\e}(\mathbf{0}) \rangle   \\
				&\leq -\frac{\lambda}{2}V^{-1}( \bfx)| \bfx|^2
				+\lambda_{\max}( \bfQ )|g_{\e}(\mathbf{0})| \frac{| \bfx|}{V( \bfx)}  \\
				&\leq -\frac{\lambda}{2\lambda_{\max}( \bfQ )}V^{-1}( \bfx)\langle  \bfx,  \bfQ  \bfx \rangle+\lambda_{\max}( \bfQ )|g_{\e}(\mathbf{0})| \frac{| \bfx|}{V( \bfx)}  \\
				&\leq -\frac{\lambda}{2\lambda_{\max}( \bfQ )}V( \bfx)+C.
			\end{aligned}
		\end{equation}
		As for the second term, it is easy to check that by Cauchy inequality
		\begin{equation}\label{e:V_BM}
			\langle \nabla^2 V( \bfx), \bmsigma \bmsigma^{\prime} \rangle_{\rm HS}
			\leqslant \| \nabla^2 V \|_{\rm HS,\infty} \cdot \| \bmsigma\bmsigma^{\prime} \|_{\rm HS}
			\leqslant   \sqrt{d}\| \bmsigma \|_{\rm op}  \leqslant  C.
		\end{equation}
		Combining \eqref{e:AeV-BM}, \eqref{e:gV-BM} and \eqref{e:V_BM}, the Lyapunov condition \eqref{e:Lya-Con} immediately follows.
		
		{\bf Case 2: $(\bfZ_t)_{t\geq 0}$ being the cylindrical stable noise  with  $\alpha\in (1,2)$.} Combining with \eqref{e:Ae}, one knows
		\begin{equation}\label{e:AeV}
			\begin{aligned}
				\mathcal{A}_{\e}^{\alpha} V( \bfx)
				&= \langle g_{\e}( \bfx),\nabla V( \bfx) \rangle + \sum_{i=1}^d \int_{\R_0 } \big[ V( \bfx+\bmsigma_i z_i)-V( \bfx) - \langle \bmsigma_i , \nabla  V( \bfx) \rangle z_i \I_{ \{ |z_i| \leq 1 \} }   \big] \frac{c_{\alpha}}{|z_i|^{1+\alpha}}  \dif z_i   \\
				&= \langle g_{\e}( \bfx),\nabla V( \bfx) \rangle
				+ \sum_{i=1}^d \int_{|z_i|> 1} \big[ V( \bfx+\bmsigma_i z_i)-V( \bfx)  \big] \frac{c_{\alpha}}{|z_i|^{1+\alpha}} \dif z_i   \\
				&\mathrel{\phantom{=}} + \sum_{i=1}^d \int_{0<|z_i|\leq 1} \big[ V( \bfx+\bmsigma_i z_i)-V( \bfx) - \langle \bmsigma_i ,\nabla V( \bfx) \rangle z_i \big]  \frac{c_{\alpha}}{|z_i|^{1+\alpha}} \dif z_i.
			\end{aligned}
		\end{equation}
		The first term $\langle g_{\e}(\bfx),\nabla V(\bfx) \rangle$ is bounded by \eqref{e:gV-BM}. For the second term of equality \eqref{e:AeV} and any $i= 1,\dotsc,d$, by the Taylor's expansion and absolute value inequality, one has
		\begin{equation*}
			\begin{aligned}
				\left| \int_{|z_i|> 1} [ V( \bfx+\bmsigma_i z_i)-V( \bfx) ] \frac{c_{\alpha}}{|z_i|^{1+\alpha}} \dif z_i \right|
				&= \left| \int_{|z_i|> 1} \int_0^1 \langle \nabla V( \bfx+r\bmsigma_i z_i), \bmsigma_i z_i\rangle  \dif r  \frac{c_{\alpha}}{|z_i|^{1+\alpha}} \dif z_i \right| \nonumber \\
				&\leq  c_{\alpha} \|\nabla V \|_{\infty} \| \bmsigma_i\|_{\rm max} \int_{|z_i|>1} \frac{ |z_i| }{|z_i|^{1+\alpha}} \dif z_i   \leqslant   C.
			\end{aligned}	
		\end{equation*}
		Similarly, for the third term of equality \eqref{e:AeV}, we know
		\begin{equation*}
			\begin{aligned}
				&\mathrel{\phantom{=}} \left|\int_{0<|z_i|\leq 1} [ V( \bfx+\bmsigma_i z_i)-V( \bfx) - \langle \bmsigma_i ,\nabla V( \bfx) z_i \rangle ] \frac{c_{\alpha}}{|z_i|^{1+\alpha}} \dif z_i \right| \nonumber \\
				&= \left| c_{\alpha}\int_{0<|z_i|\leq 1}  \int_0^1 \int_0^r \langle  \bmsigma_i \bmsigma_i^{\prime} , \nabla^2 V( \bfx+ s\bmsigma_i z_i) \rangle_{\rm HS} \frac{ z_i^2}{|z_i|^{1+\alpha}} \dif s \dif r   \dif z_i \right| \\
				&\leq c_{\alpha} \| \nabla^2 V \|_{{\rm HS}, \infty} \| \bmsigma_i\|_{\rm max}^2 \int_{0<|z_i|\leq 1} |z_i|^{1-\alpha} \dif z_i  \leq  C \nonumber .
			\end{aligned}
		\end{equation*}
		Hence, we can obtain that
		\begin{equation} \label{e:V}
			\sum_{i=1}^d \left| \int_{\R_0} \big[ V( \bfx+\bmsigma_i z_i)-V( \bfx)  - \langle \bmsigma_i , \nabla  V( \bfx) \rangle z_i \I_{ \{ |z_i| \leq 1 \} }   \big] \frac{c_{\alpha}}{|z_i|^{1+\alpha}}  \dif z_i \right|   \leqslant  Cd .
		\end{equation}
		Combining  \eqref{e:gV-BM}, \eqref{e:AeV} and \eqref{e:V}, the Lyapunov condition \eqref{e:Lya-Con} immediately follows.
		
		{\bf Case 3:  $(\bfZ_t)_{t\geq 0}$ being the rotationally symmetric stable noise with $\alpha \in (1,2)$.} By \eqref{e:Ae rotational}, we obtain
		\begin{equation}\label{e:AeV rotational}
			\begin{aligned}
				\mathcal{A}_{\e}^{\alpha} V(\bfx)
				&= \langle g_{\e}(\bfx),\nabla V(\bfx) \rangle + \int_{\R^d_0 } [ V(\bfx+\bmsigma \bfz)-V(\bfx) - \langle \bmsigma \bfz, \nabla  V(\bfx) \rangle  \I_{ \{ |\bfz| \leq 1 \} }   ] \frac{C_{d,\alpha}}{|\bfz|^{d+\alpha}}  \dif \bfz   \\
				&= \langle g_{\e}(\bfx),\nabla V(\bfx) \rangle
				+ \int_{|\bfz|> 1} [ V(\bfx+\bmsigma \bfz)-V(\bfx)  ] \frac{C_{d,\alpha}}{|\bfz|^{d+\alpha}} \dif \bfz   \\
				&\mathrel{\phantom{=}} + \int_{0<|\bfz|\leq 1} [ V(\bfx+\bmsigma \bfz)-V(\bfx) - \langle \bmsigma \bfz,\nabla V(\bfx) \rangle ] \frac{C_{d,\alpha}}{|\bfz|^{d+\alpha}} \dif \bfz.
			\end{aligned}
		\end{equation}	
		The first term $\langle g_{\e}(\bfx),\nabla V(\bfx) \rangle$ is bounded by \eqref{e:gV-BM}. For the second term, by the Taylor's expansion, we have
		\begin{equation}\label{e:V-1}
			\begin{aligned}
				\left|\int_{|\bfz|> 1} [ V(\bfx+\bmsigma \bfz)-V(\bfx) ] \frac{C_{d,\alpha}}{|\bfz|^{d+\alpha}} \dif \bfz\right|
				&\leqslant \int_{|\bfz|> 1} \int_0^1 \big| \langle \nabla V(\bfx+r\bmsigma \bfz), \bmsigma \bfz\rangle \big| \dif r  \frac{C_{d,\alpha}}{|\bfz|^{d+\alpha}} \dif \bfz \\
				&\leq \|\nabla V \|_{\infty} \| \bmsigma\|_{\rm op} \int_{|\bfz|>1} \frac{ C_{d,\alpha} |\bfz| }{|\bfz|^{d+\alpha}} \dif \bfz   \leqslant   C.
			\end{aligned}
		\end{equation}
		Similarly, for the third term, we obtain
		\begin{equation}\label{e:V-2}
			\begin{aligned}
				&\mathrel{\phantom{=}}
				{\left|\int_{0<|\bfz|\leq 1} [ V(\bfx+\bmsigma \bfz)-V(\bfx) - \langle \bmsigma \bfz,\nabla V(\bfx) \rangle ] \frac{C_{d,\alpha}}{|\bfz|^{d+\alpha}} \dif \bfz\right|}  \\
				&\leqslant \int_{0<|\bfz|\leq 1}  \int_0^1 \int_0^r \big| \langle  \bmsigma \bfz, \nabla^2 V(\bfx+ s\bmsigma \bfz)\bmsigma \bfz \rangle \big| \frac{C_{d,\alpha}}{|\bfz|^{d+\alpha}} \dif s \dif r   \dif \bfz  \\
				&\leq \| \nabla^2 V \|_{{\rm HS},\infty} \| \bmsigma \|_{\rm op}^2  \int_{0<|\bfz|\leq 1} \int_0^1 \int_0^r \frac{C_{d,\alpha} |\bfz|^2}{|\bfz|^{d+\alpha}} \dif s \dif r \dif \bfz  \leq C.
			\end{aligned}
		\end{equation}
		Combining \eqref{e:gV-BM}, \eqref{e:AeV rotational}, \eqref{e:V-1} and \eqref{e:V-2}, the Lyapunov condition \eqref{e:Lya-Con} immediately follows.
		
		Using the It\^{o}'s formula, we have
		\begin{equation*}
			\E V( \bfX_t^{\e, \bfx})
			= V( \bfx) + \E \int_0^t  \mathcal{A}_{\e}^{\alpha} V( \bfX_s^{\e, \bfx}) \dif s
			\leq V( \bfx) + \int_0^t \left[-\frac{\lambda}{2\lambda_{\max}( \bfQ )} \E V( \bfX_s^{\e, \bfx}) + C \right]  \dif s.
		\end{equation*}
		It follows from \cite[pp. 32-33]{Gurvich2014Diffusion}  that
		\begin{equation}\label{e:EV}
			\E V( \bfX_t^{\e, \bfx})
			\leq   \rme^{-\frac{\lambda t}{2\lambda_{\max}( \bfQ )} } V( \bfx) +\frac{C \lambda_{\max}( \bfQ )}{\lambda} \Big(1- \rme^{-\frac{\lambda t}{2\lambda_{\max}( \bfQ )}} \Big).
		\end{equation}
		Combining this with the fact in \eqref{lem:moment-est-p1}, we  get the estimate \eqref{e:Xmon} for $\E| \bfX_{t}^{\e, \bfx}|$. With similar discussions, we also get the moment estimate for $  \E|\bfX_t^{\bfx}|$ in \eqref{e:Xmon}.
		
		Let $ \bfX_0^{\e}$ follow the ergodic measure $\mu_{\e}$, then $ \bfX_t^{\e}$ follows measure $\mu_{\e}$ for all $t\geq 0$ and  the inequality  \eqref{e:EV} implies that for all $t>0$,
		\begin{equation*}
	\mu_{\e}(V)
	\leq   \rme^{-\frac{\lambda t}{2\lambda_{\max}( \bfQ )} }  \mu_{\e}(V) +\frac{C \lambda_{\max}( \bfQ )}{\lambda} \Big(1- \rme^{-\frac{\lambda t}{2\lambda_{\max}( \bfQ )}} \Big) .
		\end{equation*}
		Thus, we obtain $\mu_{\e}(V)\leq C$, and so does $\mu_{\e}(|\bfx|)$, while the constant $C$ is independent of $\e$.
		
		(ii) For any $s\in(t_{i-1}, t_i]$ and $\alpha \in (1,2]$, one has
		\begin{equation*}
			\bfX_{t_{i-1},s}^{\e, \bfx} =  \bfx+ \int_{t_{i-1}}^s g_{\e}( \bfX_{t_{i-1},u}^{\e, \bfx}) \dif u+ \bmsigma  \bfZ_{s-t_{i-1}}.
		\end{equation*}
		It follows from the estimate for $\E| \bfX_t^{\e, \bfx}|$ above and the linear growth for function $g_{\e}$, we know for any $s\in(t_{i-1},t_i]$,
		\begin{equation}\label{e:EXs}
			\begin{aligned}
				\E| \bfX_{t_{i-1},s}^{\e, \bfx}- \bfx| &\leq C \int_{t_{i-1}}^s \E |  \bfX_{t_{i-1},u}^{\e, \bfx} | \dif u + \| \bmsigma \|_{\rm op} (s-t_{i-1})^{{1}/{\alpha}} \E| \bfZ_1| \\
				&\leqslant C(1+| \bfx|)[(s-t_{i-1})^{{1}/{\alpha}} \lor (s-t_{i-1})]
				\leqslant  C(1+| \bfx|) \eta_{i}^{1/\alpha},
			\end{aligned}
		\end{equation}
		where the last inequality holds since $s-t_{i-1} \le \eta_i< 1$.
		
		On the other hand, we have
		\begin{equation*}
			\bfY_{t_{i-1},t_i}^{\e, \bfx} =  \bfx+ \int_{t_{i-1}}^{t_i} g_{\e}( \bfx) \dif s+\bmsigma   \bfZ_{t_i-t_{i-1}},
		\end{equation*}
		which leads to
		\begin{equation*}
			\bfX_{t_{i-1},t_i}^{\e, \bfx}- \bfY_{t_{i-1},t_i}^{\e, \bfx}
			= \int_{t_{i-1}}^{t_i} [g_{\e}( \bfX_{t_{i-1},s}^{\e, \bfx})-g_{\e}( \bfx)] \dif s.
		\end{equation*}
		Combining \eqref{e:EXs} and the global Lipschitz property for function $g_{\e}$, we know
		\begin{equation}\label{e:EX-Y}
			\E | \bfX_{t_{i-1},t_i}^{\e, \bfx}- \bfY_{t_{i-1},t_i}^{\e, \bfx} |
			\leq  C \int_{t_{i-1}}^{t_i} \E | \bfX_{t_{i-1},s}^{\e, \bfx}- \bfx|\dif s
			\leq C (1+| \bfx|)\eta_i^{1+{1}/{\alpha}},
		\end{equation}
		which is the estimate \eqref{e:X-Ymon} for $\E | \bfX_{t_{n-1},t_n}^{\e, \bfx}- \bfY_{t_{n-1},t_n}^{\e, \bfx} |$.
		
		(iii) Similar to estimating $\E| \bfX_t^{\e, \bfx}|$, {we give the estimate for $\E| \bfY_{t_n}^{\e, \bfx}|$ with $n\in \mathbb{N}_0$}. Let us now estimate $\E V( \bfY_{t_k,t_{k+1}}^{\e, \bfy})$. We have
		\begin{equation*}
			\begin{aligned}
				\E V( \bfY_{t_k,t_{k+1}}^{\e, \bfy}) &= \E V( \bfY_{t_k,t_{k+1}}^{\e, \bfy}) - \E V( \bfX_{t_k,t_{k+1}}^{\e, \bfy}) +  \E V( \bfX_{t_k,t_{k+1}}^{\e, \bfy}) \\
				&\leq \| \nabla V\|_{\infty} \E | \bfX_{t_k,t_{k+1}}^{\e, \bfy}-  \bfY_{t_k,t_{k+1}}^{\e, \bfy}|
				+\E V( \bfX_{t_k,t_{k+1}}^{\e, \bfy}).
			\end{aligned}
		\end{equation*}
		Using \eqref{e:EV} and \eqref{e:EX-Y}, there exist some positive constants $c$ and $C$ such that
		\begin{equation}\label{e:EV0}
			\begin{aligned}
				\E V( \bfY_{t_k,t_{k+1}}^{\e, \bfy})
				&\leq
				C(1+|\bfy|) \eta_{k+1}^{1+{1}/{\alpha}} + {\rm e}^{-c \eta_{k+1}}V(\bfy) + C (1-{\rm e}^{-c \eta_{k+1}})   \\
				&\leq \big( C\eta_{k+1}^{1+{1}/{\alpha}} + {\rm e}^{-c \eta_{k+1}} \big) V(\bfy)  + C \eta_{k+1}.
			\end{aligned}
		\end{equation}
		
		Due to Assumption \ref{assump-2}, $ (\eta_n)_{n\in \mathbb{N}}$ decreases to $0$, and there exists a positive integer $k_0$ such that for all $k\geq k_0$, one has $ C\eta_{k+1}^{1+1/\alpha} + {\rm e}^{-c \eta_{k+1}}\leq 1-c\eta_{k+1}/2$, Thus,
		\begin{equation*}
			\E V( \bfY_{t_k,t_{k+1}}^{\e, \bfy})
			\leq \Big(1-\frac{c}{2}\eta_{k+1} \Big) V( \bfy) + C\eta_{k+1},
		\end{equation*}
		which implies that
		\begin{equation*}
			\E V( \bfY_{t_{k+1}}^{\e, \bfx})
			= \E\big\{ \E [V( \bfY_{t_{k+1}}^{\e, \bfx})| \bfY_{t_{k}}^{\e, \bfx} ] \big\}
			\leq  \Big(1-\frac{c}{2}\eta_{k+1} \Big) \E V(\bfY_{t_{k}}^{\e, \bfx}) + C\eta_{k+1}.
		\end{equation*}
		By induction, we obtain for any $k\geq k_0$, there exists some positive constant $C$ independent of $k$ satisfying
		\begin{equation}\label{e:Yn1}
			\begin{aligned}
				\E V( \bfY_{t_{k+1}}^{\e, \bfx})
				&\leq \Big( 1-\frac{c}{2}\eta_k \Big)( 1-\frac{c}{2}\eta_{k+1} ) \E V( \bfY_{t_{k-1}}^{\e, \bfx} ) + C \eta_k \Big( 1-\frac{c}{2} \eta_{k+1} \Big) + C \eta_{k+1}  \\
				&\leq \prod_{i=k_0}^{k} \Big(1-\frac{c}{2}\eta_i \Big)
				\E V( \bfY_{t_{k_0}}^{\e, \bfx}) + C\sum_{j=k_0}^{k} \eta_j \prod_{i=j+1}^{k} \Big(1-\frac{c}{2}\eta_i \Big)   \\
				&\leq C\E\big[1+V( \bfY_{t_{k_0}}^{\e, \bfx})\big],
			\end{aligned}
		\end{equation}
		where the last inequality holds from the facts that $\prod_{i=k_0}^{k}(1-c\eta_i/2)$ and $\sum_{j=k_0}^{k} \eta_j \prod_{i=j+1}^{k} (1-c\eta_i/2)$ are bounded.
		
		Due to Assumption \ref{assump-2} and for any $k\leq k_0$, we know $C\eta_{k}^{1+1/\alpha} + {\rm e}^{-c \eta_{k}} \leq 1-c \eta_{k}+C\eta_{k}^{3/2}$. Combining \eqref{e:EV0} and with a similar argument for \eqref{e:Yn1}, we obtain
		\begin{equation*}
			\begin{aligned}
				\E V( \bfY_{t_{k_0}}^{\e, \bfx})
				&\leq \left(1-c\eta_{k_0}
				+C\eta_{k_0}^{{3}/{2}}\right) \E V( \bfY_{t_{k_0-1}}^{\e, \bfx} ) + C\eta_{k_0} \\
				&\leq \left(1-c\eta_{k_0-1}
				+C\eta_{k_0-1}^{{3}/{2}}\right) \left(1-c\eta_{k_0}
				+C\eta_{k_0}^{{3}/{2}}\right)  \E V( \bfY_{t_{k_0-2}}^{\e, \bfx} )  \nonumber \\
				&\mathrel{\phantom{\leq}} + C \left(1-c\eta_{k_0-1}
				+C\eta_{k_0-1}^{{3}/{2}}\right) \eta_{k_0} + C \eta_{k_0} \nonumber \\
				&\leq C(1+V( \bfx)).
			\end{aligned}
		\end{equation*}
		Combining this with \eqref{e:Yn1}, for any $n\in \mathbb{N}_0$, there exists some positive constant $C$ independent of $n$ satisfying
		\begin{equation*}
			\E V( \bfY_{t_n}^{\e, \bfx})
			\leq C(1+V( \bfx)),
		\end{equation*}
		which implies the estimate \eqref{e:Ymon} for $\E| \bfY_{t_n}^{\e, \bfx}|$ by combining the relationship between $V(\bfx)$ and $|\bfx|$.
		
		(iv) Consider $\bfX^{\e,\bfx}_{t}-\bfX_{t}^{\bfx}$, which satisfies the following equation
		\begin{equation*}
			\begin{aligned}
		\frac{\dif}{\dif t} \left(\bfX^{\e,\bfx}_{t}-\bfX_{t}^{\bfx}\right)
				&= g_\e(\bfX^{\e,\bfx}_t)-g(\bfX^{\bfx}_t)
				= g_\e(\bfX^{\e,\bfx}_t)-g_{\e}(\bfX^{\bfx}_t)			+g_{\e}(\bfX^{\bfx}_t)-g(\bfX^{\bfx}_{t}) \\
				&= \nabla g_{\e}({\bmtheta}_{t})  \left(\bfX^{\e,\bfx}_{t}-\bfX_{t}^{\bfx}\right)			+g_{\e}(\bfX^{\bfx}_t)-g(\bfX^{\bfx}_{t}),
			\end{aligned}
		\end{equation*}
		where $\bmtheta_{t}$ is between $\bfX^{\bfx}_t$ and $\bfX^{\e,\bfx}_t$. The above equation can be solved by
		\begin{equation*}
			\bfX^{\e,\bfx}_{t}-\bfX_{t}^{\bfx}
			= \int_{0}^{t} \exp\left(\int_{s}^{t} \nabla g_{\e}(\bmtheta_{r}) \dif r \right) (g_{\e}(\bfX^{\bfx}_s)-g(\bfX^{\bfx}_{s})) \dif s.
		\end{equation*}
		Since $\|\nabla g_{\e}(\bfx)\|_{ {\rm op} } \le C_{{\rm op}}$ for all $\bfx \in \R^d$, the  relation $|g_{\e}(\bfx)-g(\bfx)| \le C_{{\rm op}}\e$ for all $\bfx \in \R^d$ immediately gives \eqref{e:XeCon-1}. The proof is complete.
	\end{proof}
	
	\subsection{Proof of Lemma \ref{lem:eeSDEs}}\label{app_A4}
	
	\begin{proof}[Proof of Lemma \ref{lem:eeSDEs}.]
		This proof is standard, and we just give some sketch of the proof here. With similar calculations for inequality \eqref{e:Lya-Con} in the proof of  Lemma \ref{lem:moment-est}, there exist some positive constants $c$ and $C$ both independent of $\e$ such that
		\begin{equation} \label{e:LyaCon}
			\mathcal{A}^{\alpha}_{\e} \widetilde{V}(\bfx)
			\leq  -c\widetilde{V}(\bfx)+C
			\quad \text{and} \quad
			\mathcal{A}^{\alpha} \widetilde{V}(\bfx)
			\leq  -c\widetilde{V}(\bfx)+C,
		\end{equation}
		which implies that the processes $(\bfX_t)_{t\geq 0}$ and $(\bfX_t^{\e})_{t\geq 0}$ are both exponential ergodicity under $\| \cdot \|_{\rm TV,\widetilde{V}}$. With similar discussions for inequality \eqref{e:EV}, one also gets the moment estimate in inequality \eqref{e:metlV}. The proof is complete.
	\end{proof}

	\section{Proofs of the lemmas in Section \ref{sec:ProofThm1}}
	
	\subsection{Proof of Lemma \ref{pro:erg-Xe}}
	\begin{proof}[Proof of Lemma \ref{pro:erg-Xe}]\label{app_B1}
	For $\alpha\in(1,2]$ and any $ \bfx,\bfy\in \R^d$, we have
		\begin{equation*}
			\dif ( \bfX_t^{\e, \bfx}- \bfX_t^{\e,\bfy}) =
			[g_{\e}( \bfX_t^{\e, \bfx})-g_{\e}( \bfX_t^{\e,\bfy})] \dif t, \quad
			\bfX_0^{\e, \bfx}- \bfX_0^{\e,\bfy} = \bfx-\bfy .
		\end{equation*}
		By the It\^{o}'s formula and \eqref{e:dis-con},  we obtain
		\begin{equation*}
			\begin{aligned}
				\dif | \bfQ ( \bfX_t^{\e, \bfx}- \bfX_t^{\e,\bfy})|^2
				&= 2\langle  \bfQ ( \bfX_t^{\e,\bfx}- \bfX_t^{\e,\bfy}), g_{\e}( \bfX_t^{\e, \bfx})-g_{\e}( \bfX_t^{\e,\bfy}) \rangle \dif t \\
				&\leq -\lambda | \bfX_t^{\e, \bfx}- \bfX_t^{\e,\bfy}|^2 \dif t
				= -\lambda | \bfQ ^{-1}  \bfQ ( \bfX_t^{\e,\bfx}- \bfX_t^{\e,\bfy})|^2 \dif t \\
				&\leq -\lambda \lambda^{-2}_{\max}( \bfQ )
				| \bfQ ( \bfX_t^{\e,\bfx}- \bfX_t^{\e,\bfy})|^2 \dif t ,
			\end{aligned}
		\end{equation*}
		which implies that
		\begin{equation*}
			| \bfQ ( \bfX_t^{\e,\bfx}- \bfX_t^{\e,\bfy})|^2
			\leq \exp\left\{-\lambda \lambda^{-2} _{\max}( \bfQ ) t \right\}
			| \bfQ (\bfx-\bfy)|^2.
		\end{equation*}
		Due to  $\lambda_{\min}( \bfQ )|\bfx-\bfy| \leqslant | \bfQ (\bfx-\bfy)| \leqslant \lambda_{\max}( \bfQ )|\bfx-\bfy|$ for any $\bfx,\bfy\in \R^d$, we get	\begin{equation}\label{e:E(X-Y)}
			| \bfX_t^{\e,\bfx}- \bfX_t^{\e,\bfy}|^2
			\leq
			\frac{\lambda^2_{\max}( \bfQ )}{\lambda^2_{\min}( \bfQ )}
			\exp\left\{-\lambda \lambda^{-2}_{\max}( \bfQ ) t \right\}
			|\bfx-\bfy|^2.
		\end{equation}
		From the definition of Wasserstein-1 distance in \eqref{e:W1} and \eqref{e:E(X-Y)}, we obtain
		\begin{equation*}
			\begin{aligned}
				\mathcal{W}_1 (\mathcal{L}( \bfX_t^{\e, \bfx}),
				\mathcal{L}( \bfX_t^{\e,\bfy}))
				&= \sup_{h\in {\rm Lip}(1)} \{ \E h( \bfX_t^{\e,\bfx})-\E h( \bfX_t^{\e,\bfy}) \}
				\leq  \E| \bfX_t^{\e,\bfx}- \bfX_t^{\e,\bfy}|  \\
				&\leq \frac{\lambda_{\max}( \bfQ )}{\lambda_{\min}( \bfQ )}
				\exp\left\{-\frac{1}{2}\lambda\lambda^{-2}_{\max}( \bfQ )t \right\}
				|\bfx-\bfy|.
			\end{aligned}
		\end{equation*}
		The proof is complete.
	\end{proof}
	
	\subsection{Proof of Lemma \ref{pro:mu-mue}}\label{app_B2}
	For $\alpha\in(1,2]$, $\e\in (0,1)$, and any function $h\in {\rm Lip}(1)$,  we consider the second kind of Stein's equations:
	\begin{equation}\label{e:Poi}
		\mathcal{A}_{\e}^{\alpha} f_{\e,h}(\bfx)
		= h( \bfx)-\mu_{\e}(h) ,
	\end{equation}
	where $\mathcal{A}_{\e}^{\alpha}$ and  $\mu_{\e}$ are the infinitesimal generators and ergodic measures for  the processes $( \bfX_t^{\e})_{t\geq 0}$ in \eqref{e:SDEe} respectively. Then, we can get the following expression and estimates for the solution to the Stein's equations in \eqref{e:Poi}.
	\begin{lemma} \label{pro:fh}
		Let $\alpha\in(1,2]$ and $\e\in (0,1)$. Under Assumptions \ref{assump:M} and \ref{assump-2}, the Stein's equations \eqref{e:Poi} admit a solution $f_{\e,h}$ with the following representation:
		\begin{equation}\label{e:fh}
			f_{\e,h}( \bfx) = -\int_0^{\infty}  \opP_{t}^{\e}[h( \bfx)-\mu_{\e}(h)] \dif t,
		\end{equation}
where $ \opP_{t}^{\e}$ are semigroups defined in \eqref{e:Pe}. Furthermore, there exists some positive constant $C$ independent of $\e$ such that
		\begin{equation*}
			|f_{\e,h}( \bfx)| \leq C(1+| \bfx|),
			\quad \text{and} \quad
			\|\nabla f_{\e,h}\|_{\infty} \leq C.
		\end{equation*}
	\end{lemma}
	
	We will prove this lemma at the end of this section. Applying this lemma, we can prove Lemma \ref{pro:mu-mue} as follow.
	
	\begin{proof}[Proof of Lemma \ref{pro:mu-mue}]
		In this proof, the expectation $\E$ is with respect to measure $\mu$, that is, $\E^{\mu}$, and we omit the superscript. For $g$ in the domain of $\mathcal{A}^{\alpha}$, i.e.,  $\mathcal{A}^{\alpha}g \in \mcl C_{lin}(\R^d)$, we have
		\begin{equation*}
			\begin{aligned}
				\mu( \mathcal{A}^{\alpha}g)& = \mu\left(\lim_{t \rightarrow 0} \frac{\opP_t g-g}{t}\right) = \lim_{t \rightarrow 0} \mu\left(\frac{\opP_t g-g}{t}\right) \\
				&= \lim_{t \rightarrow 0} \frac{\E[g(\bfX_t^{\bf x})]-\mu(g)}{t} = \lim_{t \rightarrow 0} \frac{\mu(g)-\mu(g)}{t} = 0,
			\end{aligned}
		\end{equation*}
		where the second equality is by dominated convergence theorem and the fourth one is by the stationarity of $\bfX_t$. Hence,
		\begin{equation} \label{e:ASta}
\E [\mathcal{A}^{\alpha} f_{\e,h}( \bfX_0)]  = 0.
		\end{equation}
		From the definition of Wasserstein-1 distance, Stein's equations in \eqref{e:Poi} and \eqref{e:ASta}, we obtain that for $\alpha \in (1,2]$,
		\begin{equation*}
			\begin{aligned}
				\mathcal{W}_1 (\mu,\mu_{\e})
				&= \sup_{h\in {\rm Lip}(1)} \{\E h( \bfX_0)-\mu_{\e}(h)  \}
				=  \sup_{h\in {\rm Lip}(1)} \{\E \mathcal{A}_{\e}^{\alpha} f_{\e,h}( \bfX_0) \} \\
				&= \sup_{h\in {\rm Lip}(1)} \{\E [(\mathcal{A}_{\e}^{\alpha}-\mathcal{A}^{\alpha}) f_{\e,h}( \bfX_0)] \} \\
				&= \sup_{h\in {\rm Lip}(1)} \{\E [\langle g_{\e}( \bfX_0)-g( \bfX_0),\nabla f_{\e,h}( \bfX_0)\rangle] \} \\
				&\leq C\e \mathbb{E} \I_{ \{ |{\rm \bf e}_d^{\prime}  \bfX_0| \leq \e \} }
				\leq  C\e,
			\end{aligned}
		\end{equation*}
		where the first inequality holds from the estimate for $\|\nabla f_{\e,h}\|_{\infty}$ in Lemma  \ref{pro:fh} and \eqref{e:ge-g}. The proof is complete.
	\end{proof}

	\begin{proof}[Proof of Lemma \ref{pro:fh}.]
		(i) For any $h \in {\rm Lip}(1)$, it follows from Lemma \ref{pro:erg-Xe} that
		\begin{equation*}
			\left| \int_{0}^{\infty}  \opP^{\e}_t[h( \bfx)-\mu_{\e}(h)] \dif t \right|
			\leqslant
			\int_{0}^{\infty}
			\mathcal{W}_1 ( \mcl{L}( \bfX_t^{\e, \bfx}),\mu_{\e}) \dif t
		\end{equation*}
		is well defined and
		\begin{equation*}
			|f_{\e,h}( \bfx)| \leq  \int_{0}^{\infty} \left|  \opP^{\e}_t[h( \bfx)-\mu_{\e}(h)] \right| \dif t
			\leq  \int_{0}^{\infty} C(1+| \bfx|)  \rme^{-\theta t}\dif t
			\leq   C(1+| \bfx|).
		\end{equation*}
		The expression for $f_{\e,h}$ in  \eqref{e:fh} can be proved similarly as that in \cite[Proposition 6.1]{Fang2019Multivariate}. We omit the details here.

		(ii) For any $\bfu\in \R^d_0$, we have
		\begin{equation*}
			\nabla_\bfu f_{\e,h}( \bfx)
			= \int_0^{\infty} \nabla_\bfu \E h( \bfX_t^{\e, \bfx}) \dif t
			=  \int_0^{\infty}  \E[\nabla h( \bfX_t^{\e, \bfx}) \nabla_\bfu  \bfX_t^{\e, \bfx}] \dif t.
		\end{equation*}
		It follows from the estimate for $|\nabla_\bfu  \bfX_t^{\e, \bfx}|$ in Lemma \ref{lem:JF-est} and $\| \nabla h \|_{\infty} \leq 1$ that
		\begin{equation*}
			|\nabla_\bfu f_{\e,h}( \bfx)|
			\leq \int_0^{\infty}   \left( \frac{\lambda_{\max}( \bfQ )}{\lambda_{\min}( \bfQ )} \right)^{{1}/{2}}		\exp\left\{-\frac{\lambda}{2\lambda_{\max}( \bfQ )}t \right\}
			|\bfu| \dif t
			\leq 2  \left( \frac{\lambda^3_{\max}( \bfQ )}{\lambda^2 \lambda_{\min}( \bfQ )} \right)^{{1}/{2}}|\bfu|,
		\end{equation*}
		which implies the desired result.
	\end{proof}

	\subsection{Proof of Lemma \ref{pro:2EM}}\label{app_B3}
	\begin{proof}[Proof of Lemma \ref{pro:2EM}.]
		
		For  positive definite matrix $ \bfQ $ in Lemma \ref{lem:Q}, recall that
		\begin{equation*}
			\widehat{V}( \bfx) = \frac{1}{2} \langle  \bfx,  \bfQ  \bfx \rangle, \quad \forall  \bfx\in \R^d.
		\end{equation*}
		Then, one has
		\begin{equation}\label{e:XV}
			\frac{1}{2}\lambda_{\min}( \bfQ )| \bfx|^2
			\leq  \widehat{V}( \bfx)
			\leq   \frac{1}{2} \lambda_{\max}( \bfQ )| \bfx|^2.
		\end{equation}
		
		It follows from those two EM schemes that for any $n\in \mathbb{N}_0$
		\begin{equation*}
			\bfY^{\e, \bfx}_{t_{n+1}}- \bfY^{ \bfx}_{t_{n+1}}
			=  \bfY^{\e, \bfx}_{t_n}- \bfY^{ \bfx}_{t_n} + \eta_{n+1}[g_{\e}( \bfY^{\e, \bfx}_{t_n})-g( \bfY_{t_n}^{ \bfx})] .
		\end{equation*}
		Then, by {a straightforward calculation,} one has
		\begin{equation}\label{e:VYe-Y}
			\begin{aligned}
				\mathrel{\phantom{=}} \widehat{V}( \bfY^{\e, \bfx}_{t_{n+1}}- \bfY^{ \bfx}_{t_{n+1}})
				&=  \frac{1}{2}\langle  \bfY^{\e, \bfx}_{t_{n+1}}- \bfY^{ \bfx}_{t_{n+1}},
				\bfQ [ \bfY^{\e, \bfx}_{t_{n+1}}- \bfY^{ \bfx}_{t_{n+1}}] \rangle   \\
				&= \frac{1}{2} \langle  \bfY^{\e, \bfx}_{t_n}- \bfY^{ \bfx}_{t_n},  \bfQ [ \bfY^{\e, \bfx}_{t_n}- \bfY^{ \bfx}_{t_n}] \rangle
				+\eta_{n+1} \langle  \bfQ [ \bfY^{\e, \bfx}_{t_n}- \bfY^{ \bfx}_{t_n}], g_{\e}( \bfY^{\e, \bfx}_{t_n})-g( \bfY^{ \bfx}_{t_n}) \rangle  \\
				&\mathrel{\phantom{=}} +\frac{1}{2} \eta_{n+1}^2 \langle g_{\e}( \bfY^{\e, \bfx}_{t_n})-g( \bfY_{t_n}^{ \bfx}),
				\bfQ [g_{\e}( \bfY^{\e, \bfx}_{t_n})-g( \bfY_{t_n}^{ \bfx})]
				\rangle.
			\end{aligned}
		\end{equation}
		It suffices to give estimates for the last two terms.
		
		For $\langle  \bfQ [ \bfY^{\e, \bfx}_{t_n}- \bfY^{ \bfx}_{t_n}], g_{\e}( \bfY^{\e, \bfx}_{t_n})-g( \bfY^{ \bfx}_{t_n}) \rangle$, it follows from \eqref{e:ge-g}, \eqref{e:dis-con} and \eqref{e:XV} that there exists some positive constant $C$ independent of $\e$ such that
		\begin{equation}\label{e:VYe-Y-1}
			\begin{aligned}
				&\mathrel{\phantom{=}}
				|\langle  \bfQ [ \bfY^{\e, \bfx}_{t_n}- \bfY^{ \bfx}_{t_n}], g_{\e}( \bfY^{\e, \bfx}_{t_n})-g( \bfY^{ \bfx}_{t_n}) \rangle|  \\
				&= |\langle  \bfQ [ \bfY^{\e, \bfx}_{t_n}- \bfY^{ \bfx}_{t_n}], g_{\e}( \bfY^{\e, \bfx}_{t_n})-g_{\e}( \bfY^{ \bfx}_{t_n})
				+g_{\e}( \bfY^{ \bfx}_{t_n})-g( \bfY^{ \bfx}_{t_n}) \rangle|   \\
				&\leq -\frac{\lambda}{2}| \bfY^{\e, \bfx}_{t_n}- \bfY^{ \bfx}_{t_n}|^2
				+C\e | \bfY^{\e, \bfx}_{t_n}- \bfY^{ \bfx}_{t_n}|  \\
				&\leq -\frac{\lambda}{2}| \bfY^{\e, \bfx}_{t_n}- \bfY^{ \bfx}_{t_n}|^2
				+C\e| \bfY^{\e, \bfx}_{t_n}- \bfY^{ \bfx}_{t_n}|^2
				+C\e  \\
				&\leq \left(-\frac{\lambda}{\lambda_{\max}( \bfQ )}
				+C\e \right)
				\widehat{V}( \bfY^{\e, \bfx}_{t_n}- \bfY^{ \bfx}_{t_n})+C\e,
			\end{aligned}
		\end{equation}
		where the second inequality holds from the Young's inequality, that is, $$\e | \bfY^{\e, \bfx}_{t_n}- \bfY^{ \bfx}_{t_n}|=
		\e^{1/2} \e^{1/2} | \bfY^{\e, \bfx}_{t_n}- \bfY^{ \bfx}_{t_n}| \leq (\e+\e | \bfY^{\e, \bfx}_{t_n}- \bfY^{ \bfx}_{t_n}|^2)/2.$$
		
		For $\langle g_{\e}( \bfY^{\e, \bfx}_{t_n})-g( \bfY^{ \bfx}_{t_n}),
		\bfQ [g_{\e}( \bfY^{\e, \bfx}_{t_n})-g( \bfY^{ \bfx}_{t_n})]
		\rangle$, it follows from \eqref{e:ge-g} and \eqref{e:XV} that there exists some positive constant $C$ independent of $\e$ such that
		\begin{equation}\label{e:VYe-Y-2}
			\begin{aligned}
				&\mathrel{\phantom{=}}
				 |\langle g_{\e}( \bfY^{\e, \bfx}_{t_n})-g( \bfY^{ \bfx}_{t_n}),
				\bfQ [g_{\e}( \bfY^{\e, \bfx}_{t_n})-g( \bfY^{ \bfx}_{t_n})]
				\rangle|  \\
				&\leq \lambda_{\max}( \bfQ )|g_{\e}( \bfY^{\e, \bfx}_{t_n})
				-g( \bfY^{ \bfx}_{t_n})|^2
				\\
				&= \lambda_{\max}( \bfQ )|g_{\e}( \bfY^{\e, \bfx}_{t_n})
				-g_{\e}( \bfY^{ \bfx}_{t_n})+g_{\e}( \bfY^{ \bfx}_{t_n})
				-g( \bfY^{ \bfx}_{t_n})|^2
				\\
				&\leq C| \bfY^{\e, \bfx}_{t_n}- \bfY^{ \bfx}_{t_n}|^2
				+C\e^2  \\
				&\leq C\widehat{V}( \bfY^{\e, \bfx}_{t_n}- \bfY^{ \bfx}_{t_n})+C\e^2.
			\end{aligned}
		\end{equation}
		
		Combining \eqref{e:VYe-Y},\eqref{e:VYe-Y-1} and \eqref{e:VYe-Y-2}, we obtain
		\begin{equation*}
			\widehat{V}( \bfY^{\e, \bfx}_{t_{n+1}}- \bfY^{ \bfx}_{t_{n+1}})
			\leq \left(
			1-\frac{\lambda}{\lambda_{\max}( \bfQ )}\eta_{n+1}
			+C\eta_{n+1} \e+C\eta_{n+1}^2  \right)\widehat{V}( \bfY^{\e, \bfx}_{t_n}- \bfY^{ \bfx}_{t_n})
			+C\eta_{n+1}\e.
		\end{equation*}
		
		Due to Assumption \ref{assump-2}, $(\eta_n)_{n\in \mathbb{N}}$ decreases to $0$. Hence, there exists some positive integer $k_0$ such that for any $n\leq k_0$, we obtain
		\begin{equation*}
			\widehat{V}( \bfY^{\e, \bfx}_{t_{n+1}}- \bfY^{ \bfx}_{t_{n+1}})
			\leq C\widehat{V}( \bfY^{\e, \bfx}_{t_n}- \bfY^{ \bfx}_{t_n})
			+C\eta_{n+1}\e.
		\end{equation*}
		Since $\widehat{V}( \bfY^{\e, \bfx}_{0}- \bfY^{ \bfx}_{0})
		=\widehat{V}({\bf 0})=0$,
		this inequality implies that there exists some positive constant $C$ (depending on $k_0$) independent of $n$ and $\e$ such that
		\begin{equation}\label{e:Vn-1}
			\widehat{V}( \bfY^{\e, \bfx}_{t_{n}}- \bfY^{ \bfx}_{t_{n}})
			\leq C\e, \quad\forall \ n\leq k_0.
		\end{equation}
		In addition, for all $n>k_0$, one has
		\begin{equation*}
			\widehat{V}( \bfY^{\e, \bfx}_{t_{n+1}}- \bfY^{ \bfx}_{t_{n+1}})
			\leq \left(
			1-\frac{\lambda}{2\lambda_{\max}( \bfQ )}
			\eta_{n+1}\right)
			\widehat{V}( \bfY^{\e, \bfx}_{t_n}- \bfY^{ \bfx}_{t_n})
			+C\eta_{n+1}\e.
		\end{equation*}
		By induction, for any $n\geq k_0$,
		\begin{equation*}
			\begin{aligned}
				\widehat{V}( \bfY^{\e, \bfx}_{t_{n+1}}- \bfY^{ \bfx}_{t_{n+1}})
				&\leq \prod_{i=k_0+1}^{n+1}\left(1-\frac{\lambda}
				{2\lambda_{\max}( \bfQ )}\eta_{i}\right)
				\widehat{V}( \bfY^{\e, \bfx}_{t_{k_0}}- \bfY^{ \bfx}_{t_{k_0}}) \\
				& + C\e \sum_{j=k_0+1}^{n+1} \eta_j \prod_{i=j+1}^{n+1}\left(1-\frac{\lambda}
				{2\lambda_{\max}( \bfQ )}\eta_i\right).
			\end{aligned}
		\end{equation*}
		Combining this with \eqref{e:Vn-1} and the fact $1-x\leqslant \exp(-x)$ for all $x\in \R$, there exists some positive constant $C$ independent of $n$ and $\e$ such that for $n>k_0$,
		\begin{equation*}
			\widehat{V}( \bfY^{\e, \bfx}_{t_{n}}- \bfY^{ \bfx}_{t_{n}})
			\leq \exp\left\{-\frac{\lambda}{2\lambda_{\max}( \bfQ )}(t_{n}-t_{k_0})\right\}
			\widehat{V}( \bfY^{\e, \bfx}_{t_{k_0}}- \bfY^{ \bfx}_{t_{k_0}})
			+C\e
			\leq  C\e.
		\end{equation*}
		
		Thus, there exists some positive constant $C$ independent of $n$ and $\e$ such that
		\begin{equation*}
			\widehat{V}( \bfY^{\e, \bfx}_{t_{n}}- \bfY^{ \bfx}_{t_{n}})
			\leq C\e, \quad \forall n\in \mathbb{N}_0,
		\end{equation*}
		which implies that
		\begin{equation*}
			\begin{aligned}
				\mathcal{W}_1 (\mcl{L}( \bfY^{\e, \bfx}_{t_{n}}),\mcl{L}( \bfY^{ \bfx}_{t_{n}})) &= \sup_{h\in {\rm Lip}(1)}  \{ \E h( \bfY^{\e, \bfx}_{t_{n}}) - \E h( \bfY^{ \bfx}_{t_{n}}) \} \\
				&\leq \E| \bfY^{\e, \bfx}_{t_{n}}- \bfY^{ \bfx}_{t_{n}}|
				\leq  C (\E \widehat{V}( \bfY^{\e, \bfx}_{t_{n}}- \bfY^{ \bfx}_{t_{n}}))^{{1}/{2}}
				\leq  C\e^{{1}/{2}}.
			\end{aligned}
		\end{equation*}
		The proof is complete.
	\end{proof}
	
	\subsection{Proof of Lemma \ref{lem:series}}\label{app_B4}
	\begin{proof}[Proof of Lemma \ref{lem:series}.]
		For convenience, we denote
		\begin{equation*}
			u_n = \sum_{i=1}^{n}  \rme^{-\theta(t_n-t_i)} \eta_i^{1+ {1}/{\alpha}} \eta_n^{-{1}/{\alpha}}.
		\end{equation*}
		Then, $u_{n+1}$ and $u_n$ have the following relationship
		\begin{equation*}
			u_{n+1} =  \lambda_n u_n + \eta_{n+1},
		\end{equation*}
		where
		\begin{equation*}
			\lambda_n
			=  \rme^{-\theta \eta_{n+1}} \left( \frac{\eta_n}{\eta_{n+1}} \right)^{{1}/{\alpha}}
			=   \rme^{-\theta \eta_{n+1}} \left(1+\frac{\eta_n^{\beta}
-\eta^{\beta}_{n+1}}{\eta^{1+\beta}_{n+1}} \eta_{n+1} \right)^{{1}/{(\alpha \beta)}}.
		\end{equation*}
		By Assumption \ref{assump-2}, we obtain
		\begin{equation*}
			\lambda_n
			\leq  \rme^{-\theta \eta_{n+1}}(1+\omega \eta_{n+1})^{{1}/{(\alpha \beta)}}
			\leq   \exp\left\{ \Big( -\theta+ \frac{\omega}{\alpha \beta} \Big) \eta_{n+1} \right\},
		\end{equation*}
		which implies that
		\begin{equation*}
			u_{n+1} \leq  \exp\left\{ \Big( -\theta+ \frac{\omega}{\alpha \beta} \Big) \eta_{n+1} \right\} u_n + \eta_{n+1}.
		\end{equation*}
		Thus, multiplying $ \rme^{(\theta-\frac{\omega}{\alpha \beta}) t_{n+1}}$ both sides, we obtain
		\begin{equation*}
			\exp\left\{ \Big(\theta-\frac{\omega}{\alpha \beta} \Big) t_{n+1} \right\} u_{n+1} \leq \exp\left\{ \Big(\theta-\frac{\omega}{\alpha \beta} \Big) t_n \right\} u_n + \exp\left\{ \Big(\theta-\frac{\omega}{\alpha \beta} \Big) t_{n+1} \right\} \eta_{n+1}.
		\end{equation*}
		By induction, we know
		\begin{equation*}
			\exp\left\{ \Big(\theta-\frac{\omega}{\alpha \beta} \Big) t_{n+1} \right\} u_{n+1} \leq \exp\left\{ \Big(\theta-\frac{\omega}{\alpha \beta} \Big) t_1 \right\} u_1 + \sum_{k=2}^{n+1} \exp\left\{ \Big(\theta-\frac{\omega}{\alpha \beta} \Big) t_k \right\} \eta_{k}.
		\end{equation*}
		To simplify notation, we denote $J=\theta-{\omega}/{(\alpha \beta)}$. Notice that $\omega < \alpha\beta\theta$ and then
		\begin{equation*}
			\begin{aligned}
				\sum_{k=2}^{n+1}  \rme^{J t_{k}} \eta_{k} &= \sum_{k=2}^{n+1}  \rme^{J  t_{k-1} + J  \eta_{k} } \eta_{k}
				\ \leqslant \  \rme^{ J  \eta_{1} } \sum_{k=2}^{n+1}  \rme^{J  t_{k-1}  } \eta_{k} \\
				&\leqslant \rme^{ J  \eta_{1} } \int_{t_1}^{t_{n+1}}  \rme^{J t}\dif t
				\ \leqslant \ J^{-1}  \rme^{ J (\eta_1 + t_{n+1})} .
			\end{aligned}
		\end{equation*}	
		Hence, there is a positive constant $C$ such that
		\begin{equation*}
			u_{n+1} \leq C, \quad \forall n\in \mathbb{N}_0.
		\end{equation*}
		Consequently, we complete the proof according to the definition of $u_n$.
	\end{proof}

	\section{Proofs of the lemmas in Section \ref{sec:SDECLTMDP}}\label{app_C}
	
	Recall that the Stein's equation \eqref{e:steinbb}, for any $h\in \mathcal{B}_b(\R^d,\R)$,
	\begin{equation*}
		\mathcal{A}^{\alpha}f_h(\bfx) = h(\bfx)-\mu(h).
	\end{equation*}
	It follows from \cite[Proposition 6.1]{Fang2019Multivariate} and Lemma \ref{lem:eeSDEs} that the solution $f_h$ can be expressed as
	\begin{equation}\label{e:tlf1}
		f_h(\bfx) = -\int_0^{\infty} \opP_t[h(\bfx)-\mu(h)] \dif t.
	\end{equation}
	The solution $f_h$  can be also expressed as (see, \cite[Lemma 2]{Jin2024An})
	\begin{equation}\label{e:tlf2}
		f_h(\bfx) = \int_0^{\infty}  \rme^{- a t}\opP_t[ a  f_h(\bfx)-h(\bfx)+\mu(h)] \dif t, \ \ \forall  a \ > \ 0.
	\end{equation}
	
	If the test function $h\in \mathcal{C}_b^{1}(\R^d,\R)$, denote $\widetilde{f}_h$ as the corresponding solution to the Stein's equation \eqref{e:steinbb} which also can be expressed as those in \eqref{e:tlf1} and \eqref{e:tlf2}.

	To discuss the regularities for the Stein's equation \eqref{e:steinbb}, we consider the third kind of Stein's equation: for $h\in \mathcal{C}_b^{1}(\R^d,\R)$,
	\begin{equation*}
		\mathcal{A}_{\e} \widetilde{f}_{\e,h}(\bfx)
		= h(\bfx)-\mu_{\e}(h),
	\end{equation*}
	where $\mathcal{A}_{\e}$ and $\mu_{\e}$ are the infinitesimal operator and the invariant measure for the process $(\bfX_t^{\e})_{t\geq 0}$ in \eqref{e:SDEe} respectively.
	It follows from \cite[Proposition 6.1]{Fang2019Multivariate} and Lemma  \ref{lem:eeSDEs} that the solution $\widetilde{f}_{\e,h}$ can be expressed as
	\begin{equation*} 
		\widetilde{f}_{\e,h}(\bfx) = -\int_0^{\infty} \opP^{\e}_{t}[h(\bfx)-\mu_{\e}(h)] \dif t,
	\end{equation*}
	which implies that there exist some positive constants $c$ and $C$, both independent of $\e$, such that
	\begin{equation}\label{e:tlfeest1}
		|\widetilde{f}_{\e,h}(\bfx)|
		\leq C(1+|\bfx|^{2 \kappa }).
	\end{equation}	
	In addition, it follows from Lemma  \ref{pro:mu-mue} that $\opP_t^{\e}h(\bfx) \to \opP_t h(\bfx)$ and
	$\mu_{\e}(h)\to \mu(h)$ as $\e \to 0$, hence, we obtain that
	\begin{equation}\label{e:tlff}
		\lim_{\e \to 0} \widetilde{f}_{\e,h}(\bfx)
		= -\lim_{\e \to 0}\int_0^{\infty} \opP^{\e}_{t}[h(\bfx)-\mu_{\e}(h)] \dif t
		= -\int_0^{\infty} \opP_{t}[h(\bfx)-\mu(h)] \dif t
		= \ \widetilde{f}_h(\bfx).
	\end{equation}	
	Furthermore, the solution $\widetilde{f}_{\e,h}$ also can be expressed as (see, \cite[Lemma 2]{Jin2024An})
	\begin{equation*}
		\widetilde{f}_{\e,h}(\bfx)
		= \int_0^{\infty}  \rme^{- a  t}\opP^{\e}_{t}[ a  \widetilde{f}_{\e,h}(\bfx)-h(\bfx)+\mu_{\e}(h)] \dif t, \quad  \forall \   a  >  0 .
	\end{equation*}
	It follows from Lemma \ref{pro:fh} that $\widetilde{f}_{\e,h} \in \mathcal{C}^1(\R^d,\R)$. Then it follows from \cite[Theorem 1.1]{zhang2013derivative} that for any $\bfu\in \R^d$,
	\begin{equation*}
		\nabla_{\bfu} \widetilde{f}_{\e,h}(\bfx)
		\ = \ \int_0^{\infty}  \rme^{- a  t}
		\E\left[ \frac{1}{S_t} \left( a  \widetilde{f}_{\e,h}(\bfX_s^{\e,\bfx})
		-h(\bfX_s^{\e,\bfx})+\mu_{\e}(h)\right) \right] \int_0^t \bmsigma^{-1} \nabla_{\bfu} \bfX_{s}^{\e,\bfx} \dif {\bf W}_{S_s} \dif s,
	\end{equation*}
	where the subordinated $({\bf W}_{S_t})_{t\geq 0}$ is the rotationally symmetric $\alpha$ stable process. Using \cite[Theorem 1.1]{zhang2013derivative} again, there exists some positive constant $C$ independent of $\e$ such that
	\begin{equation}\label{e:tlNufeest}
		\begin{aligned}
			&\mathrel{\phantom{=}}
			|\nabla_{\bfu} \widetilde{f}_{\e,h}(\bfx)| \\
			&= \int_0^{\infty}  \rme^{- a  t} \left[
			a  \left(\E |\widetilde{f}_{\e,h}(\bfX_s^{\e,\bfx})|^2\right)^{1/2}
			+\left(\E|h(\bfX_s^{\e,\bfx})-\mu_{\e}(h)|^2\right)^{1/2} \right] \|\bmsigma^{-1}\|_{\rm op} \rme^{\| \nabla g_{\e}\|_{\rm op} t} t^{-1/\alpha} \dif t  \\
			&\leq C(1+|\bfx|^{2\beta}),
		\end{aligned}
	\end{equation}
	where the last inequality holds from taking $ a >\|\nabla g_{\e}\|_{\rm op}+1$ and  moment estimate for $\E |\widetilde{f}_{\e,h}(\bfX_s^{\e,\bfx})|^2$, that is,
	\begin{equation*}
		\E|\widetilde{f}_{\e,h}(\bfX_s^{\e,\bfx})|^2 \leq C\E(1+|\bfX_s^{\e,\bfx}|^{4 \kappa })
		\ \leq \ C(1+|\bfx|^{4 \kappa }),
	\end{equation*}
	where the first inequality holds from the estimate for $\widetilde{f}_{\e,h}$ in  \eqref{e:tlfeest1}, the second inequality holds from the moment estimate for
	$\E|\bfX_s^{\e,\bfx}|$ in \eqref{e:Xmon} and the fact $4 \kappa <1$.

	By the convergence results in \eqref{e:tlff}, Lemmas \ref{lem:moment-est} and \ref{l:XeCon}, we know that
	\begin{equation}\label{e:tlfef}
		\begin{aligned}
			& \mathrel{\phantom{=}}
			\lim_{\e \to 0}\nabla_{\bfu} \widetilde{f}_{\e,h}(\bfx) \\
			&= \lim_{\e \to 0} \int_0^{\infty}  \rme^{- a  t}
			\E\left[ \frac{1}{S_t} \left( a  \widetilde{f}_{\e,h}(\bfX_s^{\e,\bfx})
			-h(\bfX_s^{\e,\bfx})+\mu_{\e}(h)\right) \right] \int_0^t \bmsigma^{-1} \nabla_{\bfu} \bfX_{s}^{\e,\bfx} \dif {\bf W}_{S_s} \dif t   \\
			&= \int_0^{\infty}  \rme^{- a  t}
			\E\left[ \frac{1}{S_t} \left( a  \widetilde{f}_h(\bfX_s^{\bfx})
			-h(\bfX_s^{\bfx})+\mu(h)\right) \right] \int_0^t \bmsigma^{-1} \nabla_{\bfu} \bfX_{s}^{\bfx} \dif {\bf W}_{S_s} \dif t.
		\end{aligned}
	\end{equation}	
	Since the operator $\nabla$ is closed (see Partington \cite[Theorem 2.2.6]{partington2004linear}), we know
	$$\lim_{\e \to 0}\nabla_{\bfu} \widetilde{f}_{\e,h}(\bfx)=\nabla_{\bfu} \widetilde{f}_h(\bfx).$$
	
	\begin{proof}[Proof of Lemma  \ref{pro:steinbb}.]
		(i) We shall use the expression for $f_h$ in \eqref{e:tlf1} and exponential ergodicity in Lemma  \ref{lem:eeSDEs} to get the estimate for $|f_h(\bfx)|$ for those two kinds of $\alpha\in(1,2)$ stable noises $(\bfZ_t)_{t\geq 0}$.
		
		Since $h\in \mathcal{B}_b(\R^d,\R)$, one knows that $|h(\bfx)|\leq C\widetilde{V}(\bfx), \forall  \bfx\in \R^d$. It follows from Lemma  \ref{lem:eeSDEs} that
		\begin{equation}\label{e:SE2}
			\begin{aligned}
				|f_h(\bfx)|
				&\leq  \int_0^{\infty} |\opP_t h(\bfx)-\mu(h)|  \dif t
				\ \leq \  \|h\|_{\infty} \int_0^{\infty} \| \opP^*_{t}\delta_{\bfx} - \mu \|_{ \rm TV,\widetilde{V} }   \dif t   \\
				&\leq C\|h\|_{\infty}(1+\widetilde{V}(\bfx)  )
				\  \leq \ C \|h\|_{\infty}(1+|\bfx|^{2 \kappa }).
			\end{aligned}
		\end{equation}
		
		(ii) We need to discuss the estimate for $|\nabla f_h(\bfx)|$ separately with respective to different $\alpha$ stable noises.
		
		{\bf Case 1. For the rotationally symmetric $\alpha$ stable noise}. Let $h \in \mathcal{C}_b^1(\R^d,\R)$. It follows from \eqref{e:tlNufeest} and \eqref{e:tlfef} that
		\begin{equation}\label{e:tlnuf}
|\nabla_{\bfu} \widetilde{f}_h(\bfx)|
\leq C \|h\|_{\infty}(1+|\bfx|^{2 \kappa }) |\bfu|.
		\end{equation}
		
		Secondly, we extend $h\in \mathcal{C}_b^1(\R^d,\R)$ to $h\in \mathcal{B}_b(\R^d,\R)$ above, by the standard approximation (see Fang et al. \cite[pp. 968-969]{Fang2019Multivariate}). Let $h\in \mathcal{B}_b(\R^d,\R)$, and define
		\begin{equation*}
			h_{\delta}(\bfx) = \int_{\R^d} \varphi_{\delta}(\bfy) h(\bfx-\bfy) \dif \bfy, \quad \delta>0,
		\end{equation*}
		where $\varphi_{\delta}$ is the density function of the normal distribution $\mathcal{N}(\mathbf{0},\delta^2 \mathbf{I})$. Thus $h_{\delta}$ is smooth, $\| h_{\delta} \|_{\infty}  \leq \| h\|_{\infty}$ and the solution to the Stein's equation \eqref{e:steinbb} with $h$ replaced by $h_{\delta}$, is
		\begin{equation*}
			f_{h,\delta}(\bfx)
			= -\int_0^{\infty} \opP_t [h_{\delta}(\bfx) - \mu(h_{\delta})] \dif t.
		\end{equation*}
		
		Denote $\hat{h}_{\delta}=-h_{\delta}+\mu(h_{\delta})$. Since $h_{\delta} \in \mathcal{C}_b^1(\R^d,\R)$, one has $h_{\delta}(\bfx) \leq \|h_{\delta}\|_{\infty}( 1+\widetilde{V}(\bfx))$ for all $\bfx\in \R^d$. From Lemma  \ref{lem:eeSDEs}, one has
		\begin{equation*}
			|\opP_t h_{\delta}(\bfx) - \mu(h_{\delta}) |
			\leq \|h_{\delta}\|_{\infty} \| \opP^*_{t}\delta_{\bfx} - \mu \|_{\rm TV,\widetilde{V}}
			\ \leq \ C \|h_{\delta}\|_{\infty}(1+\widetilde{V}(\bfx))  \rme^{-ct}.
		\end{equation*}
		With similar calculations for \eqref{e:SE2}, one has
		\begin{equation*}
			|f_{h,\delta}(\bfx)|
			\leq C \|h_{\delta}\|_{\infty}(1+|\bfx|^{2 \kappa }).
		\end{equation*}
		By the dominated convergence theorem, one has
		\begin{equation*}
			\lim_{\delta \to 0} f_{h,\delta}(\bfx)
			\ = \ - \int_0^{\infty} \opP_t[h(\bfx)-\mu(h)] \dif t
			\ = \ f_h(\bfx).
		\end{equation*}
		From \eqref{e:SE2} and $\| h_{\delta} \|_{\infty} \leq \| h \|_{\infty}$, one has
		\begin{equation*}
			|f_{h,\delta}(\bfx)|
			\leq C \|h_{\delta}\|_{\infty}(1+|\bfx|^{2 \kappa })
			\ \leq \ C \|h\|_{\infty}(1+|\bfx|^{2 \kappa }).
		\end{equation*}
		Letting $\delta \to 0$, one obtains
		\begin{equation}\label{e:fh1}
			|f_h(\bfx)|
			\ \leq \ C \|h\|_{\infty}(1+|\bfx|^{2 \kappa }).
		\end{equation}
		
		From calculations for \eqref{e:tlfef}, one has
		\begin{equation*}
			\nabla_{\bfu} f_{h,\delta}(\bfx)
			= \int_0^{\infty}  \rme^{- a  t}
			\E\left[ \frac{1}{S_t} \left( a  f_{h,\delta}(\bfX_s^{\bfx})
			-h_{\delta}(\bfX_s^{\bfx})+\mu(h_{\delta})\right) \right] \int_0^t \sigma^{-1} \nabla_{\bfu} \bfX_{s}^{\bfx} \dif {\bf W}_{S_s} \dif t
		\end{equation*}
		With similar calculations for the proof of \eqref{e:tlnuf}, one has
		\begin{equation}\label{e:fdh}
			|\nabla_{\bfu} f_{h,\delta}(\bfx)|
			\leq C\|h_{\delta}\|_{\infty}(1+|\bfx|^{2 \kappa }) |\bfu|
			\ \leq \ C\|h\|_{\infty}(1+|\bfx|^{2 \kappa }) |\bfu|.
		\end{equation}		
		Since the operator $\nabla$ is closed (see Partington \cite[Theorem 2.2.6]{partington2004linear}), it follows from the dominated convergence theorem that
		\begin{equation*}
			\begin{aligned}
				&\mathrel{\phantom{=}}
				\lim_{\delta \to 0} \nabla_{\bfu} f_{h,\delta}(\bfx) \\
				&= \lim_{\delta \to 0}  \int_0^{\infty}  \rme^{- a  t}
				\E\left[ \frac{1}{S_t} \left( a  f_{h,\delta}(\bfX_s^{\bfx})
				-h_{\delta}(\bfX_s^{\bfx})+\mu(h_{\delta})\right) \right] \int_0^t \bmsigma^{-1} \nabla_{\bfu} {\bfX}_{s}^{\bfx} \dif {\bf W}_{S_s} \dif t \\
	&= \int_0^{\infty}  \rme^{- a  t}
\E\left[ \frac{1}{S_t} \left( a  f_h(\bfX_s^{\bfx})
-h(\bfX_s^{\bfx})+\mu(h_{\delta})\right) \right] \int_0^t \bmsigma^{-1} \nabla_{\bfu} \bfX_{s}^{\bfx} \dif {\bf W}_{S_s} \dif t
				= \nabla_{\bfu} f_h(\bfx).
			\end{aligned}
		\end{equation*}
		Let $\delta \to 0$, it follows from \eqref{e:fdh} that
		\begin{equation*}
|\nabla_{\bfu} f_h(\bfx)|
\leq  C\|h\|_{\infty}(1+|\bfx|^{2 \kappa }) |\bfu|.
		\end{equation*}

		{\bf Case 2. For the  cylindrical  $\alpha$ stable noise.} Since $g$ is Lipschitz continuous, it follows from \eqref{e:tlf2} and \cite[Theorem 1.2]{wang2015harnack} that for any $\bfu\in \R^d$,
		\begin{equation*}
			\begin{aligned}
				&\mathrel{\phantom{=}}
				|\nabla_{\bfu} f_h(\bfx)|
				= \left| \int_0^{\infty}  \nabla_{\bfu} [ a   \rme^{- a  t} \E f_h(\bfX_t^\bfx) -  \rme^{- a  t} \E (h(\bfX_t^\bfx)-\mu(h))] \dif t \right|   \\
				&\leq C\int_0^{+\infty}  \rme^{- a  t}(1+t \rme^{d \|g\|_{\rm Lip}t})t^{1-1/\alpha} \left[  a (\E|f_h(\bfX_t^{\bfx})|^2)^{1/2}
+(\E|h(\bfX_t^{\bfx})-\mu(h)|^2)^{1/2}
				\right] \dif t  \\
				&\leq C(1+|\bfx|^{2 \kappa }),
			\end{aligned}
		\end{equation*}
		where the last inequality holds from taking $ a >d\|g\|_{\rm Lip}$ with $\|g\|_{\rm Lip}=\sup_{\bfx,\bfy \in \mathbb{R}^d}\frac{|g(\bfx)-g(\bfy)|}{|\bfx-\bfy|}$ and the moment estimate for $\E|f_h(\bfX_t^{\bfx})|^2$ as below
		\begin{equation*}
			\E|f_{h}(\bfX_s^{\bfx})|^2 \leq C\E(1+|\bfX_s^{\bfx}|^{4 \kappa })
			\ \leq \ C(1+|\bfx|^{4 \kappa }),
		\end{equation*}
		where the first inequality holds from the estimate for $|f_h|$ in \eqref{e:fh1}, the second  inequality holds from the moment estimate for $  \E|\bfX_t^{\bfx}|$ in \eqref{e:Xmon} and the fact $4 \kappa  <1$. The proof is complete.
	\end{proof}
	
\end{appendix}

\section*{Acknowledgements}
L. Xu is supported by the National Natural Science Foundation of China No. 12071499, the Science and Technology Development Fund (FDCT) of Macau S.A.R. FDCT 0074/2023/RIA2, and the University of Macau grants MYRG2020-00039-FST, MYRG-GRG2023-00088-FST.  G. Pang is supported in part by the US National Science Foundation grant DMS-2216765. X. Jin was supported in part by the National Natural Science Foundation of China No. 12401176, and the Fundamental Research Funds for the Central Universities grant No. JZ2023HGTA0170.

\bibliographystyle{acm}
\normalem
\bibliography{piecewise_reference}

\end{document}